\newtheorem{thm}{Theorem}[section]
\newtheorem{prop}[thm]{Proposition}
\newtheorem{lem}[thm]{Lemma}
\newtheorem{cor}[thm]{Corollary}
\theoremstyle{definition}
\newtheorem{definition}[thm]{Definition}
\newtheorem{example}[thm]{Example}
\newtheorem{assum}[thm]{Assumption}
\theoremstyle{remark}
\newtheorem{remark}[thm]{Remark}
\numberwithin{equation}{section}
\newcommand{\bQ}{\mathbb{Q}}
\newcommand{\bP}{\mathbb{P}}
\newcommand\OO{{\mathcal{O}}}
\newcommand{\rounddown}[1]{\lfloor{#1}\rfloor}
\newcommand{\roundup}[1]{\lceil{#1}\rceil}
\newcommand{\bC}{{\mathbb C}}
\newcommand\lcm{{\text{l.c.m.}}}
\begin{document}

\title{On birational geometry of minimal threefolds with numerically trivial canonical divisors}
\date{\today}
\author{Chen Jiang}
\address{Graduate School of Mathematical Sciences, the University of Tokyo,
3-8-1 Komaba, Meguro-ku, Tokyo 153-8914, Japan.}
\email{cjiang@ms.u-tokyo.ac.jp}

\thanks{The author is supported by Grant-in-Aid for JSPS Fellows (KAKENHI No. 25-6549) and Program for Leading Graduate  Schools, MEXT, Japan}

\begin{abstract}
For a minimal $3$-fold $X$ with $K_X\equiv 0$ and a nef and big Weil divisor $L$ on $X$, we investigate the birational geometry inspired by $L$. We prove that $|mL|$ and $|K_X+mL|$ give birational maps for all $m\geq 17$. The result remains true under weaker assumption that $L$ is big and has no stable base components.
\end{abstract} 
\keywords{minimal varieties, birationality, boundedness}
\subjclass[2000]{14E05, 14J30}
\maketitle
\pagestyle{myheadings} \markboth{\hfill  C. Jiang
\hfill}{\hfill On birational geometry of $3$-folds with $K\equiv 0$\hfill}

\tableofcontents

\section{Introduction}

A normal projective variety $X$ is said to be {\it minimal} if $X$ has at worst $\bQ$-factorial terminal singularities and the canonical divisor $K_X$ is nef.
According to Minimal Model Program, minimal varieties form a fundamental class in birational geometry.

Given an $n$-dimensional normal projective variety $X$ with mild singularities and a  big Weil divisor $L$ on $X$, we are interested in the geometry of the rational map $\Phi_{|mL|}$ defined by the linear system $|mL|$. By definition, $\Phi_{|mL|}$ is  birational onto its image when $m$ is sufficiently large.  Therefore it is interesting to find such a practical number $m(n)$, depending only on $\dim X$, which stably guarantees the birationality of $\Phi_{|mL|}$. In fact, the following three special cases are the most interesting:

{\bf Case 1.} $K_X$ is nef and big, $L=K_X$;

{\bf Case 2.} $K_X\equiv 0$, $L$ is an arbitrary nef and big Weil divisor;

{\bf Case 3.} $-K_X$ is nef and big, $L=-K_X$.

It is an interesting exercise to deal the case $X$ being a smooth curve or surface. We recall some known results on surfaces.
\begin{thm}[{cf. Bombieri \cite{Bomb}, Reider \cite{Reider}}]
Let $S$ be a smooth surface.
\begin{itemize}
\item[(1)] If $K_S$ is nef and big, then $|mK_S|$ gives a birational map for $m\geq 5$;
\item[(2)] If $K_S\equiv 0$, then $|mL|$ gives a birational map for $m\geq 3$ and $L$ an arbitrary nef and big divisor;
\item[(3)] If $-K_S$ is nef and big, then $|-mK_S|$ gives a birational map for $m\geq 3$.
\end{itemize}
\end{thm}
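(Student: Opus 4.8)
The unifying tool is Reider's theorem \cite{Reider}: for a nef divisor $N$ on a smooth projective surface $S$ and two (general, hence distinct) points $x,y$, if $|K_S+N|$ fails to separate $x$ and $y$ then there is an effective divisor $D$ through $x,y$ with $N\cdot D\le 2$ and small self-intersection, provided $N^2\ge 10$. Since birationality of $\Phi_{|M|}$ only requires $|M|$ to separate two \emph{general} points, the plan in all three cases is identical: write the relevant complete linear system in adjoint form $|K_S+N|$ with $N$ nef and big, arrange $N^2\ge 10$, and show that the Reider-exceptional divisors $D$ sweep out only a proper closed subset of $S$, which two general points avoid.

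For Case (1) I would take $M=mK_S$ and $N=(m-1)K_S$; as $K_S$ is nef and big, $N$ is nef and big with $N^2=(m-1)^2K_S^2\ge 16K_S^2\ge 16$ once $m\ge 5$, using $K_S^2\ge 1$. For Case (3) I set $M=-mK_S$ and $N=-(m+1)K_S$, so $N^2=(m+1)^2K_S^2\ge 16$ for $m\ge 3$. For Case (2), since $K_S\equiv 0$ one has $|mL|=|K_S+(mL-K_S)|$ with $N=mL-K_S\equiv mL$ nef and big, so $N^2=m^2L^2$.

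In each case the exceptional analysis is the same. In Cases (1) and (3), $N$ is an integral multiple $cA$ of $A=\pm K_S$ with $c\ge 4$, so $N\cdot D=c\,(A\cdot D)\le 2$ forces $A\cdot D=0$, whence $N\cdot D=0$; in Case (2), $N\cdot D=m(L\cdot D)\le 2$ with $m\ge 3$ forces $L\cdot D=0$. In either situation the Hodge index theorem, applied to the nef and big class $N$ with $N\cdot D=0$, gives $D^2<0$ for every nonzero effective $D$, ruling out the $D^2=0$ alternative in Reider's list. The curves with $N\cdot D=0$ are exactly those contracted by the birational morphism attached to $N$, a finite set of negative curves; their union is a proper closed subset, and choosing $x,y$ off it shows $|K_S+N|$ separates two general points, so $\Phi_{|M|}$ is birational.

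The one genuine subtlety is the boundary of Case (2): when $L^2=1$ and $m=3$ one only gets $N^2=9$, just below Reider's threshold. The clean way around this is to recall that $K_S\equiv 0$ on a smooth surface forces $S$ to be minimal of Kodaira dimension $0$ (an abelian, K3, Enriques, or bielliptic surface), and on each of these the intersection form on $\mathrm{Num}(S)$ is even, so $L^2\ge 2$ and $N^2\ge 18$. Alternatively, since one needs only to separate two \emph{distinct} general points rather than to achieve very ampleness, the weaker separation form of Reider suffices already for $N^2\ge 9$. I expect this threshold bookkeeping --- matching the exact constants $5,3,3$ to Reider's numerical hypotheses --- to be the main point requiring care, the remainder being formal.
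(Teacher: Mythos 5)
The paper offers no proof of this theorem at all---it is quoted as classical background with the citation to Bombieri and Reider---and your Reider-theoretic derivation is precisely the standard argument behind that citation, with the numerics checking out (in particular, the evenness of the intersection form when $K_S\equiv 0$ follows from adjunction, since $C^2=2p_a(C)-2$ for every irreducible curve, and this rescues the $N^2\ge 10$ threshold in Case (2)). The only blemish is the clause about ``ruling out the $D^2=0$ alternative'': once divisibility forces $N\cdot D=0$, the surviving Reider cases have $D^2\in\{-1,-2\}$, which are not ruled out by Hodge index but are instead disposed of by your (correct) observation that such curves are finitely many and hence avoided by two general points.
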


Smooth threefolds were studied by Matsuki \cite{Matsuki},  M. Chen \cite{Chen6}, Ando \cite{Ando}, Fukuda \cite{F},  Oguiso \cite{O}, and many others, and we have the following known results.
\begin{thm}[{Chen \cite{Chen6}, Fukuda \cite{F}}]
Let $X$ be a smooth $3$-fold.
\begin{itemize}
\item[(1)] If $K_X$ is nef and big, then $|mK_X|$ gives a birational map for $m\geq 6$;
\item[(2)] If $K_X\equiv 0$, then $|mL|$ gives a birational map for $m\geq 6$ and $L$ an arbitrary nef and big divisor;
\item[(3)] If $-K_X$ is nef and big, then $|-mK_X|$ gives a birational map for $m\geq 4$.
\end{itemize}
\end{thm}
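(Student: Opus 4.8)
\emph{Strategy.} The plan is to reduce birationality of $\Phi_{|mL|}$ to separating two general points $x_1,x_2\in X$, and to control this separation through Kawamata--Viehweg (Nadel) vanishing. Concretely, to separate $x_1,x_2$ by a complete system of the form $|K_X+A|$ with $A$ a nef and big integral divisor, it suffices to produce a decomposition $A\sim_{\bQ}D+N$ with $D$ an effective $\bQ$-divisor and $N$ nef and big, such that the non-klt locus of $(X,D)$ is the isolated set $\{x_1,x_2\}$ (one point being isolated after a tie-breaking perturbation). Then Nadel vanishing gives $H^1(X,\OO_X(K_X+A)\otimes\mathcal{J}(D))=0$, so $H^0(X,\OO_X(K_X+A))$ surjects onto the skyscrapers at $x_1,x_2$ and the points are separated. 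All three cases fit this template with $A$ nef and big: in Case~2 one uses $mL\equiv K_X+mL$ since $K_X\equiv 0$, so the adjoint machinery applies to $|K_X+mL|$.

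\emph{The case $K_X\equiv 0$.} Here $L^3>0$ and, by Riemann--Roch together with $H^{>0}(X,K_X+mL)=0$, we have $h^0(X,mL)=\chi(\OO_X)+\tfrac{m^3}{6}L^3+\tfrac{m}{12}L\cdot c_2$, which grows like $\tfrac{m^3}{6}L^3$. A naive attempt to build the singular divisor $D$ purely from this volume fails, because $L^3$ can be arbitrarily small; this forces a dimension-reducing induction. First I would fix $m_0$ with $\Phi_{|m_0L|}$ birational and replace $X$ by a suitable model, then take a general member $S$ of the movable part of an appropriate multiple of $L$. For general $S$ one arranges $S$ smooth with $K_S=(K_X+S)|_S$, so that the surface statement in Theorem~1.1(2) (birationality for $m\geq 3$ when the canonical class is numerically trivial) separates points lying on a common $S$. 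It then remains to separate points on distinct members, i.e.\ to argue along the base of the associated fibration, and to combine the two separations; balancing the numerology of the restriction exact sequence against the surface bound $m\geq 3$ is what yields the threshold $m\geq 6$.

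\emph{The remaining cases.} For Case~1 the same restriction-to-a-general-surface induction applies, now with the surface pluricanonical bound $m\geq 5$ of Theorem~1.1(1) as the base case and with no numerical-versus-linear subtlety, reproducing $m\geq 6$. For Case~3, following Fukuda, the hypothesis that $-K_X$ is nef and big places $X$ in a near-Fano situation; one first establishes base-point-freeness and a birationality threshold for $|-mK_X|$ through a base-point-free threshold estimate, then descends to the surface and curve level using Theorem~1.1(3), giving the sharper bound $m\geq 4$.

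\emph{Main obstacle.} The principal difficulty is twofold. First, since $L^3$ (respectively $K_X^3$, $(-K_X)^3$) admits no lower bound, no volume estimate alone produces the required non-klt divisor $D$, so one cannot avoid the induction on dimension together with the delicate verification that the general cut $S$ is normal with at worst canonical singularities and that the restriction sequence stays exact---precisely where Kawamata--Viehweg vanishing must be applied with care. Second, in Case~2 the canonical class is only numerically trivial; by abundance it is torsion of bounded index, so one must track the passage between $|K_X+mL|$ and $|mL|$, and between numerical and linear equivalence of the cut class on $S$, in order to invoke the surface result legitimately. Controlling these two points simultaneously, and then sharpening the resulting inequalities to reach the stated constants $6$, $6$, and $4$, is the technical heart of the argument.
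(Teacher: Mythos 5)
First, note that the paper does not actually prove this statement: it is quoted as a known result of Chen \cite{Chen6} and Fukuda \cite{F}, so the only internal point of comparison is the proof of Theorem \ref{cy Gor}, where the author runs Fukuda's argument in the Gorenstein case. That argument restricts $|K_Y+m\pi^*L|$ to a general member $N$ of the free movable part of $|2\pi^*L|$ via Kawamata--Viehweg vanishing and then applies Reider's theorem \cite{Reider} to the adjoint system $|K_N+\pi^*((m-2)L)|_N|$ on the surface $N$. Your overall template (adjoint system, vanishing, cut down to a surface) is the right one, but your proposal is an outline rather than a proof: none of the thresholds $6$, $6$, $4$ is actually derived, and the step you yourself identify as the technical heart is left open.

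More importantly, there is a concrete error in the reduction. In Case 2 you propose to separate two points lying on a common general member $S$ of the movable part by invoking Theorem 1.1(2), i.e.\ birationality for $m\geq 3$ when the canonical class of the surface is numerically trivial. But for such an $S$ one has $K_S=(K_X+S)|_S\equiv S|_S$, which is big (not numerically trivial) whenever the system cutting out $S$ is big and not composed with a pencil; so $S$ is a surface of general type and Theorem 1.1(2) simply does not apply to it. The correct surface ingredient is Reider's theorem for $|K_S+D|$ with $D$ nef of large self-intersection, which is exactly what Fukuda and the paper use, and it is the source of the constant: one needs roughly $D^2\geq 10$ and $D\cdot C\geq 5$, and chasing this through the restriction $D=(m-2)\pi^*L|_S$ is what forces the bound on $m$. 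The same confusion occurs in Case 1, where the restricted surface is again of general type and the relevant system on it is an adjoint system $|K_S+(\text{nef and big})|$ rather than a pluricanonical system to which Theorem 1.1(1) would apply. Until the surface step is replaced by Reider's theorem (or an equivalent adjoint statement) and the numerology is actually carried out, the proposal does not establish the stated bounds.
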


When $X$ is a $3$-fold with $\bQ$-factorial terminal singularities, Case 1 was systematically treated by J. A. Chen  and M. Chen \cite{CC1}-\cite{CC3} and Case 3  is systematically treated by M. Chen and the author \cite{CJ}.
\begin{thm}[{Chen--Chen \cite{CC3}, Chen--Jiang \cite{CJ}}]
Let $X$ be a  $3$-fold  with $\bQ$-factorial terminal singularities.
\begin{itemize}
\item[(1)] If $K_X$ is nef and big, then $|mK_X|$ gives a birational map for $m\geq 61$;
\item[(2)] If $-K_X$ is nef and big, then $|-mK_X|$ gives a birational map for $m\geq 97$;
\item[(3)] If $-K_X$ is ample with $\rho(X)=1$, then $|-mK_X|$ gives a birational map for $m\geq 39$;
\end{itemize}
\end{thm}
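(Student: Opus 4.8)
The plan is to follow the now-standard strategy for effective (anti-)pluricanonical birationality on terminal $3$-folds, which interlocks three ingredients: Reid's singular Riemann--Roch, sharp lower bounds on the relevant volume, and an inductive restriction to generic surfaces and then generic curves. First I would write Reid's formula for $\chi(\OO_X(mK_X))$ (resp. $\chi(\OO_X(-mK_X))$), isolating the leading volume term $\tfrac{1}{12}m(m-1)(2m-1)K_X^3$, the term involving $K_X\cdot c_2$, and the contribution $\sum_Q l(m,Q)$ of the terminal basket $\{(b_i,r_i)\}$. Combined with Kawamata--Viehweg vanishing, which kills the higher cohomology of the bundles in play, this reduces the problem to making $h^0$ large and to showing that the resulting sections separate two general points of $X$. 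The basket contributions are periodic in $m$ and admit explicit lower bounds, so together with a volume bound they furnish effective non-vanishing thresholds $m_0$ with $|m_0K_X|\neq\emptyset$.

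The effective constants $61$, $97$, $39$ are ultimately dictated by how small the volume can be. For the general type case~(1) the plan is to feed in the sharp bound $K_X^3\geq \tfrac{1}{420}$, coming from the classification of the extremal baskets realizing minimal volume; for the weak Fano cases~(2) and~(3) I would use the analogous explicit lower bounds for $(-K_X)^3$ (sharpened further when $\rho(X)=1$, which is what buys the improvement from $97$ to $39$). Plugging these into the Riemann--Roch estimate controls the first exponent $m_1$ for which $\Phi_{|m_1K_X|}$ (resp. $\Phi_{|-m_1K_X|}$) has image of dimension at least $2$, giving access to a covering family of surfaces.

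Next I would run the standard separation argument by induction on dimension. Taking a generic irreducible element $S$ of the movable part of $|m_1K_X|$ (resp. $|-m_1K_X|$), one shows that $|mK_X|$ separates two general points by: (a) separating the members $S$ passing through them; (b) restricting to a general such $S$ and separating a generic curve $C$ in a pencil on $S$; and (c) restricting to $C$, where a divisor of large enough degree is very ample. The passage back up is handled by Kawamata--Viehweg vanishing, used as a lifting tool to extend the separating sections from $C$ to $S$ and from $S$ to $X$; at the surface level one invokes Reider-type or Noether-type input on $S$.

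The hard part will be the simultaneous bookkeeping at the two levels. One must guarantee that the generic member $S$ is irreducible with sufficiently controlled singularities for the surface-level estimates to apply, and that the non-vanishing thresholds from Step~1 and the restricted degrees on $S$ and $C$ can be made effective \emph{at the same time}. This forces a finite but delicate case analysis over the baskets that realize small volume, and it is precisely the optimization of this interplay — rather than any single vanishing or Riemann--Roch computation — that produces the explicit bounds $61$, $97$, and $39$.
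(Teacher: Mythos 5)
This theorem is not proved in the paper at all: it is quoted as background and attributed to Chen--Chen \cite{CC3} and Chen--Jiang \cite{CJ}, so there is no in-paper proof to compare your attempt against. Judged on its own, your proposal is a strategy outline rather than a proof; it correctly names the standard ingredients (Reid's singular Riemann--Roch plus Kawamata--Viehweg vanishing, effective non-vanishing, and the two-step restriction to a general surface $S$ and then a general curve $C$ — the same framework the present paper axiomatizes in its Theorem \ref{cy b mb}), but everything that actually produces the constants $61$, $97$, $39$ is deferred to ``a finite but delicate case analysis,'' which is precisely the content of the cited papers.

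Two concrete inaccuracies in the outline are worth flagging. First, the logic does not run ``volume bound $\Rightarrow$ Riemann--Roch $\Rightarrow$ birationality'' as you describe: in \cite{CC3} the proven lower bound is $K_X^3\geq 1/1680$ (the value $1/420$ is only the smallest known example, realized by $X_{46}\subset\PPP(4,5,6,7,23)$, not the proven bound), and both the volume bound and the birationality threshold are extracted \emph{simultaneously} from a classification of the admissible baskets via the packing/unpacking technique applied to the plurigenus formula; the volume estimate is closer to an output than an input. Plugging $K_X^3\geq 1/420$ (or even $1/1680$) naively into Riemann--Roch does not yield $m\geq 61$. Second, for parts (2) and (3) the improvement from $97$ to $39$ under $\rho(X)=1$ does not come merely from a sharper bound on $(-K_X)^3$; it exploits the stronger structure of $\bQ$-Fano threefolds of Picard number one (in particular better control of $m_0$, $m_1$, $\mu_0$ and $\zeta$ in the notation of Section \ref{cy section birationality}). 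So the proposal is a reasonable roadmap to the literature but does not constitute a proof, and the step it identifies as hard is exactly the step it omits.
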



Now we consider Case 2, when $X$ is a minimal $3$-fold with $K_X\equiv 0$ and $L$ is an arbitrary nef and big Weil divisor on $X$. If $X$ is smooth, then $|mL|$ gives a birational map for $m\geq 6$ by Fukuda \cite{F}. If $X$ is with Gorenstein terminal singularities and $q(X):=h^1(\OO_X)=0$, then $|mL|$ gives a birational map for $m\geq 5$ by Oguiso--Peternell \cite{OP}.  

The motivation of this paper is to study the birational geometry of minimal $3$-fold with $K\equiv 0$.
For an arbitrary nef and big Weil divisor $L$ on $X$, we investigate the birationality of the linear system $|mL|$ and the adjoint linear system  $|K_X+mL|$.
We remark that the behavior of these two linear systems are a litte diffenrent even though they are numerically equivalent. For example, if the local index $i(X)>1$, then $i(X)L$ is always a Cartier divisor while  $K_X+i(X)L$ can never be (see Section 2).

The difficulty arises from the singularities of $X$, and the assumption that $L$ is only a Weil divisor. If we assume that $L$ is Cartier, then the problem becomes relatively  easy and can be treated by the method of Fukuda \cite{F} using Reider's theorem \cite{Reider}. On the other hand, fortunately, the singularities of minimal $3$-folds with $K\equiv 0$ is not so complicated due to Kawamata \cite{Ka=0} and Morrison \cite{M=0}, and this makes it possible to deal with the birationality problem.

As the main result, we prove the following theorem. 
\begin{thm}\label{cy main} 
Let $X$ be a minimal $3$-fold with $K_X\equiv 0$ and  a nef and big Weil divisor  $L$. Then $|mL|$ and  $|K_X+mL|$ give birational maps for all $m\geq 17$.
\end{thm}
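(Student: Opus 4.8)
The plan is to follow the dimension-reduction philosophy of Fukuda \cite{F} and Chen--Chen \cite{CC3}, but to rewrite every positivity estimate for a genuine \emph{Weil} divisor. The essential difficulty, flagged in the introduction, is that $mL$ need not be $\bQ$-Cartier, so one cannot feed $\OO_X(mL)$ directly into Kawamata--Viehweg vanishing. First I would record the local structure: by Kawamata \cite{Ka=0} and Morrison \cite{M=0}, $X$ carries only isolated terminal cyclic quotient singularities, some multiple satisfies $i(X)K_X\sim 0$, and $i(X)L$ is Cartier (Section~2). This lets me split the analysis into an \emph{integral part} of $mL$, where honest vanishing on a resolution is available, and a bounded \emph{fractional part} supported near the singular points, whose local contributions I can read off from the basket via Reid's Riemann--Roch. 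Since $K_X\equiv 0$ this takes the numerical form
\[
\chi(\OO_X(D))=\tfrac{1}{6}D^3+\tfrac{1}{12}D\cdot c_2(X)+\chi(\OO_X)+\sum_Q c_Q(D),
\]
the $c_Q(D)$ being the singularity corrections, and together with vanishing for the integral part it gives firm control of $h^0(\OO_X(mL))$.

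Next I would organize the argument by the Albanese map $\alb\colon X\to\Alb(X)$, whose fiber dimension is governed by $q(X)$. When $q(X)>0$ this produces a fibration $f\colon X'\to\Gamma$ (after a resolution $\pi\colon X'\to X$ on which the moving part of $\pi^*|m_0L|$ is free, for a suitable reference $m_0$). For a general fiber $F$ I would use the restriction sequence attached to $F$, prove that $H^1$ of the integral part of $mL-F$ vanishes so that sections restrict surjectively, and thereby reduce birationality of $\Phi_{|mL|}$ to two sub-claims: (i) $\Phi_{|mL|}$ separates distinct general fibers of $f$, and (ii) the restriction $\Phi_{|mL|}\big|_F$ is birational. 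A first part of the work is to arrange the fibration so that $L|_F$ is again nef and big; granting this, in the curve case ($\dim\Gamma=2$) a genus/degree estimate suffices, and in the surface case ($\dim\Gamma=1$) adjunction gives $\OO_{X'}(F)|_F\cong\OO_F$, so $F$ is a minimal surface with $K_F\equiv 0$ and part~(2) of the surface theorem yields birationality already for small $m$. Claim (i) is then controlled by the positivity of $L$ pushed down to $\Gamma$, quantified through the Riemann--Roch count above. The parallel statement for $|K_X+mL|$ follows the same outline: the only change is that the torsion class $K_X$ shifts the fractional data $c_Q$ at each $Q$, and I would verify that these shifts do not worsen the threshold.

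The main obstacle, and the step that pins down the constant, is the case $q(X)=0$, where no positive-dimensional Albanese base is available and one cannot pass to a lower dimension. Here one must estimate $\chi(\OO_X)$ and the basket sum $\sum_Q c_Q(mL)$ simultaneously, and the non-$\bQ$-Cartier behaviour of $mL$ forces a careful comparison of the reflexive sheaf $\OO_X(mL)$ with its rounded integral part at each cyclic quotient point, for $m\not\equiv 0\ (\mathrm{mod}\ i(X))$. Optimizing the resulting numerical inequality over all admissible baskets and residues of $m$ is precisely what produces the bound $m\geq 17$; I expect the worst configuration to combine a singularity of small index with small volume $L^3$, and checking that $m=17$ clears every such case—while some smaller $m$ genuinely fails—is the most delicate computation. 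The Gorenstein subcases ($i(X)=1$) should reduce to the Oguiso--Peternell \cite{OP} bound and hence be comfortably dominated by $17$.
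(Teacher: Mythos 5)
There is a genuine gap in the main case. For $q(X)=0$ and $\chi(\OO_X)>0$ (the only case not already covered by Fukuda and Oguiso--Peternell), your proposal reduces everything to ``estimating $\chi(\OO_X)$ and the basket sum $\sum_Q c_Q(mL)$ simultaneously'' and ``optimizing the resulting numerical inequality.'' But an estimate of $h^0(mL)$, however sharp, cannot by itself show that $\Phi_{|mL|}$ separates points: you never describe the geometric mechanism that converts section counts into birationality. The paper's actual engine is a two-step restriction that you omit entirely. One first replaces $L$ by a torsion twist $L_0=L+i_0K_X$ chosen (using the periodicity identity $\sum_{i=0}^{i(X)-1}h^0(L+iK_X)=i(X)\lambda(L)$, Lemma \ref{cy lambda}) so that $h^0(m_0L_0)\geq 2$ for a small $m_0$ --- this twist is essential and invisible in your outline. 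One then takes a general irreducible element $S$ of the movable part of $|m_0L_0|$ on a resolution, a general irreducible element $C$ of $|M_{m_1}|_S|$ for a second system $|m_1L_0|$ not composed with the same pencil, proves the two separation properties (Propositions \ref{cy b a1} and \ref{cy b a2}), and finally shows via Kawamata--Viehweg vanishing that $|K_X+mL+T|$ restricts on $C$ to a system of degree $\epsilon(m)=(m-\mu_0-m_1)\zeta>2$, hence very ample on $C$ (Theorem \ref{cy b mb}). The quantitative heart is the lower bound on $\zeta=(\pi^*L\cdot C)$ in Lemma \ref{lemma zeta}, which uses $g(C)\geq 1$ (pseudo-effectivity of $K_Y$), the integrality $i(X)\zeta\in\ZZ_{>0}$, and a bootstrapping argument; none of this appears in your proposal, and without it there is no route from Reid's formula to the constant $17$.

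Two smaller points. First, the Albanese fibration you build for $q(X)>0$ is a detour: by Kawamata (Theorem \ref{cy fact}(3)), $q(X)>0$ forces $X$ to be smooth, hence Gorenstein, and the whole problem is then settled with $m\geq 5$ by the argument of Section \ref{cy Gor section} (Fukuda's Key Lemma plus Reider); no adjunction on Albanese fibers is needed. Second, your expectation that the worst configuration combines ``a singularity of small index with small volume $L^3$'' is off: in the paper the extremal cases forcing $m\geq 17$ are the large local indices $i(X)=10$ and $i(X)=12$, where Morrison's classification pins down the basket and the bound comes from $m>\max\{m_0+m_1+\rho_0-1,\ \mu_0+m_1+2/\zeta\}$ with $\zeta\geq 3/10$ resp.\ $1/3$. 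The correct intuition --- evident once the criterion is in place --- is that large index makes $L^3$, $\lambda(L)$, and $\zeta$ potentially small simultaneously, which is what degrades the threshold.
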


In fact, we prove a more general theorem.
\begin{thm}\label{cy main1} 
Let $X$ be a minimal $3$-fold with $K_X\equiv 0$, a nef and big Weil divisor L, and a Weil divisor $T\equiv 0$. Then $|K_X+mL+T|$ gives a birational map for all $m\geq 17$.
\end{thm}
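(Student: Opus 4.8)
The overall plan is to reduce the birationality of $\Phi_{|K_X+mL+T|}$ first to a statement on a surface and then to a statement on a curve, in the spirit of the Chen--Chen method, while exploiting the very special geometry of the singularities of $K_X\equiv 0$ threefolds (Kawamata, Morrison) to keep the bound sharp. Since $K_X\equiv 0$ and $T\equiv 0$, the divisor $D_m:=K_X+mL+T$ is numerically equivalent to the nef and big Weil divisor $mL$; in particular $D_m^3=m^3L^3$ and all cross terms involving $K_X$ or $T$ vanish. I would first record the structural input (to be developed in Section~2, after Kawamata and Morrison): $K_X$ is torsion, the local indices of the cyclic quotient terminal singularities are controlled, and $i(X)L$ is Cartier. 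This lets me compute $h^0(X,\OO_X(D_m))$ via Reid's Riemann--Roch — using Kawamata--Viehweg vanishing to kill higher cohomology, as $D_m-K_X\equiv mL$ is nef and big and $\bQ$-Cartier — and to bound the basket and fractional contributions from below. I would also note at the outset that the non-adjoint system $|mL|$ is the special case $T=-K_X\equiv 0$ of the present theorem, so it suffices to treat $|K_X+mL+T|$, which is exactly the shape ($K_X$ plus nef and big) adapted to vanishing theorems.

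The next step is to produce a surface and lift sections. Using the Riemann--Roch estimate together with the boundedness of the basket, I would show that for some small $m_0$ the movable part of $|m_0L|$ already defines a nontrivial map, and take a generic irreducible element $S$ of this movable part on a resolution $\pi:X'\to X$; then $S$ is an irreducible surface. Because $D_m-S$ is numerically $(m-m_0)L$ up to exceptional divisors, hence expressible as $K_{X'}$ plus a nef and big $\bQ$-divisor, Kawamata--Viehweg vanishing gives $H^1(X',\OO_{X'}(D_m-S))=0$ and therefore surjectivity of the restriction $H^0(X',\OO_{X'}(D_m))\to H^0(S,\OO_{X'}(D_m)|_S)$. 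This splits the birationality of $\Phi_{|D_m|}$ into: (a) separating two general points lying on distinct surfaces $S$, and (b) separating two general points of a single $S$. Task (a) follows since $|D_m|$ contains the (pull-back of the) system $|m_0L|$ plus a fixed effective divisor, so $\Phi_{|D_m|}$ distinguishes the two general members through $x_1$ and $x_2$; the case where $\dim\Phi_{|m_0L|}\ge 2$ is only easier, so the governing case is that of a pencil.

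For task (b), adjunction writes $D_m|_S$ as numerically $K_S$ plus a nef and big $\bQ$-divisor, the extra boundary arising from the round-up $\roundup{(mL+T)|_S}$ of the fractional part. I would pass to the minimal model $\sigma\colon S\to S_0$ and invoke the surface birationality results quoted in the introduction (Bombieri--Reider), reducing to separating two general points on a generic irreducible curve $C\subset S$. The final step is the degree estimate $\deg(D_m|_C)\ge 2g(C)+1$ (or the corresponding very-ampleness bound), which unwinds into an inequality among $L^3$, $L^2\cdot S$, $L\cdot C$ and the local indices. Optimizing this inequality over all admissible singularity baskets and all permissible choices of $m_0$ is precisely what produces the threshold $m\ge 17$.

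The main obstacle, I expect, is the singular, non-Cartier nature of $L$. Since $L$ is only Weil, the restrictions $L|_S$ and $L|_C$ carry fractional parts that must be tracked through two successive adjunctions, and Reider's theorem does not apply verbatim on the singular surface $S$; the delicate work is the case analysis on the basket of singularities needed to bound these fractional contributions from below and to guarantee that $S$ and its minimal model, together with the induced divisors, are sufficiently well behaved and remain nef and big with controlled volume. Ensuring that no worst-case configuration of the basket forces $m$ larger than $17$ is the step I anticipate will dominate the proof.
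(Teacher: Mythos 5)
Your overall skeleton --- Reid's Riemann--Roch plus Kawamata--Viehweg vanishing to lift sections, restriction to a general member $S$ of the movable part of some $|m_0L|$, separation of distinct members, then a final degree count on a curve --- does match the architecture of the paper's proof (Sections 4 and 5). But the middle step, where you propose to pass to a minimal model of $S$ and ``invoke the surface birationality results (Bombieri--Reider)'', is exactly the step the paper singles out as unavailable: since $L$ is only a Weil divisor, $(mL)|_S$ is a $\bQ$-divisor whose relevant intersection numbers are a priori bounded below only by $1/i(X)$ with $i(X)$ as large as $12$; Reider's theorem needs an integral nef divisor, and $S$ need not be of general type, so Bombieri does not apply either. (The paper does use Reider, but only in the Gorenstein case, where $L$ is Cartier.) Instead, the paper introduces a \emph{second} auxiliary system $|m_1L_0|$ not composed with the same pencil as $|m_0L_0|$, restricts it to $S$ to obtain a free system $|G|$ with general member $C$, and reduces everything to very ampleness of $K_C+\roundup{\mathcal{D}}$ with $\deg\mathcal{D}=\epsilon(m)=(m-\mu_0-m_1)\zeta>2$, where $\zeta=\pi^*(L)\cdot C$. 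Your sketch never explains where the curve $C$ comes from; Reider's theorem does not produce it.

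The decisive quantitative ingredient missing from your proposal is the lower bound on $\zeta$. With only the naive bound $\zeta\geq 1/i(X)$ one would need roughly $m>\mu_0+m_1+2i(X)$, which for $i(X)=12$ is far beyond $17$. The paper's Lemma \ref{lemma zeta} bootstraps: using that $C$ moves in a free system on $S$ and that $g(C)\geq 1$ (because $K_Y$ is pseudo-effective), it proves $\zeta\geq 1$ when $g(C)=1$ and $\zeta\geq (2g(C)-1)/(\mu_0+m_1+1)$ in general, and then combines this with the integrality $i(X)\zeta\in\ZZ_{>0}$ to round up (Proposition \ref{cy b zeta}). Equally essential, and absent from your sketch, is the freedom to replace $L$ by a numerically equivalent twist $L_0=L+i_0K_X$ chosen so that the local index of $L_0$ vanishes at the worst basket point; without this the Riemann--Roch estimates do not yield small enough $m_0$ and $m_1$ in the cases $i(X)\in\{5,6,8,10,12\}$. ``Optimizing over all admissible baskets'' does not by itself supply either of these two ideas, and without them the threshold $17$ is not reachable by your route.
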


Moreover, by Log Minimal Model Program, the assumption that $L$ is nef can be weaken. We say that a divisor $D$ {\it has no stable base components} if $|mD|$ has no base components for sufficiently divisible $m$.
\begin{thm}\label{cy main2} 
Let $X$ be a minimal $3$-fold with $K_X\equiv 0$,  a  big Weil divisor $L$ without stable base components,  and a Weil divisor $T\equiv 0$. Then $|K_X+mL+T|$ gives a birational map for all $m\geq 17$. In particular, $|mL|$ and  $|K_X+mL|$ give birational maps for all $m\geq 17$.
\end{thm}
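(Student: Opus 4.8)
The plan is to deduce Theorem \ref{cy main2} from Theorem \ref{cy main1} by using the Log Minimal Model Program to replace $X$ by a birationally equivalent minimal model on which $L$ becomes nef, while leaving the relevant linear systems unchanged. Concretely, I aim to construct a minimal $3$-fold $X'$ with $K_{X'}\equiv 0$, a birational map $\phi\colon X\dashrightarrow X'$ that is an isomorphism in codimension one, and Weil divisors $L'=\phi_*L$, $T'=\phi_*T$ on $X'$ such that $L'$ is \emph{nef} and big and $T'\equiv 0$. Since $\phi$ is an isomorphism in codimension one and the divisorial sheaves involved are reflexive on the normal varieties $X$ and $X'$, one has $H^0(X,\OO_X(K_X+mL+T))\cong H^0(X',\OO_{X'}(K_{X'}+mL'+T'))$ for all $m$, and the two rational maps agree on the common open subset; hence $|K_X+mL+T|$ is birational if and only if $|K_{X'}+mL'+T'|$ is. Theorem \ref{cy main1} applied to $X'$ then gives birationality for all $m\geq 17$.

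To build $X'$, I first use that $L$ has no stable base components: for a sufficiently divisible $m_0$ the linear system $|m_0L|$ has no fixed divisorial part, so by Bertini a general member $M\in|m_0L|$ is reduced with singularities contained in the (codimension $\geq 2$) base locus. Since $X$ is terminal, the pair $(X,\delta M)$ is klt for all sufficiently small rational $\delta>0$. Because $K_X\equiv 0$ we have $K_X+\delta M\equiv \delta m_0 L$, so a $(K_X+\delta M)$-MMP is exactly an $L$-MMP. As $L$ is big, $K_X+\delta M$ is big, so this MMP cannot end with a Mori fibre space; by the Minimal Model Program for klt $3$-folds it terminates with a log minimal model $\phi\colon X\dashrightarrow X'$, on which $K_{X'}+\delta M'$ is nef.

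The key observation is that every step of this MMP is a flop, so that $\phi$ is an isomorphism in codimension one. Indeed, each contracted extremal ray $R$ satisfies $(K_X+\delta M)\cdot R<0$, and since $K_X\equiv 0$ this forces $K_X\cdot R=0$; every step is therefore $K_X$-trivial. But a $\bQ$-factorial terminal $3$-fold admits no $K_X$-trivial divisorial contraction: if some step were a divisorial contraction $g\colon X\to Y$ contracting a divisor $E$ with fibre curve $C$, then writing $K_X=g^*K_Y+aE$ with $a>0$ (terminality) would give $0=K_X\cdot C=a(E\cdot C)$, contradicting $E\cdot C<0$. Hence the MMP consists solely of $K_X$-trivial flips, i.e.\ flops, which are isomorphisms in codimension one and preserve $K\equiv 0$; thus $X'$ is again a minimal $3$-fold with $K_{X'}\equiv 0$, $L'=\phi_*L$ is a big Weil divisor, and $K_{X'}+\delta M'\equiv \delta m_0 L'$ nef yields $L'$ nef. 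Applying the reduction of the first paragraph finishes the birationality of $|K_X+mL+T|$ for $m\geq 17$. The final assertions follow by specialization: taking $T=0$ gives $|K_X+mL|$, and taking $T=-K_X$ (a Weil divisor with $-K_X\equiv 0$) gives $K_X+mL+T=mL$, so $|mL|$ is birational for $m\geq 17$ as well.

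The main obstacle is the structural analysis of the MMP in the third paragraph: one must be sure that running the $(K_X+\delta M)$-MMP never requires a genuine divisorial contraction or a fibre-type contraction, so that $X$ and $X'$ are isomorphic in codimension one and the reflexive sheaf cohomology is literally preserved. The numerical computation ruling out $K_X$-trivial divisorial contractions on a terminal $3$-fold is the heart of the matter; once it is in place, the reduction to the nef case of Theorem \ref{cy main1} is essentially formal, modulo the standard verifications that $(X,\delta M)$ is klt and that the $3$-fold MMP terminates.
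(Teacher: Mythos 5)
Your overall strategy coincides with the paper's: run a $(K_X+\delta M)$-MMP for a general member $M$ of a base-component-free multiple of $L$, argue that the MMP is an isomorphism in codimension one, and reduce to Theorem \ref{cy main1} via the invariance of $H^0$ of reflexive sheaves. However, the step you yourself single out as ``the heart of the matter'' contains a genuine gap. Your claim that \emph{a $\bQ$-factorial terminal $3$-fold admits no $K_X$-trivial divisorial contraction} is false in general: e.g.\ the blow-up $X$ of the cone $Y$ over the Veronese surface ($\CC^3/\ZZ_3$ with weights $(1,1,1)$) is smooth and admits the crepant extremal divisorial contraction $X\to Y$. Your derivation of a contradiction writes $K_X=g^*K_Y+aE$ with ``$a>0$ (terminality)'', but $a=a(E,Y)$ is a discrepancy over the \emph{target} $Y$; terminality of $X$ gives no control on it, and in a putative $K_X$-trivial divisorial contraction one has precisely $a=0$ with $Y$ canonical but not terminal --- no contradiction. (Your weakening of the pair condition to klt makes this worse: even the discrepancy inequalities along the MMP then only give $a(E,Y,\delta g_*M)>-1$.)

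The missing ingredient is exactly the hypothesis that $|kL|$ has no base components, which your argument never uses at this point. The paper uses it directly: $K_X+\delta M\sim_{\bQ}\delta M$ is movable, and the exceptional divisor of any divisorial contraction in a $D$-MMP is covered by curves $C$ with $D\cdot C<0$, hence would be a stable base component of $D$ --- impossible for movable $D$. Equivalently, your computation can be repaired as follows: since $E$ is not a base component, $E\not\subseteq\operatorname{Supp}M$ for general $M$, so $a(E,X,\delta M)=0$ and the strict discrepancy inequality for a $(K_X+\delta M)$-negative contraction gives $a(E,Y,\delta g_*M)>0$, whence $a(E,Y)>0$ and then $0=K_X\cdot C=a(E,Y)(E\cdot C)<0$ is the desired contradiction. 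With that repair (and taking $\delta$ small enough that $(X,\delta M)$ is terminal, as the paper does, so that terminality of $X'$ is preserved along the flops), the rest of your reduction --- termination by Kawamata, no Mori fibre space since $K_X+\delta M$ is big, invariance of $H^0$, and the specializations $T=0$ and $T=-K_X$ --- is correct and matches the paper.
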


As a by-product, we prove   a direct generalization of Fukuda \cite{F} and Oguiso--Peternell \cite{OP} which is optimal by the general weighted hypersurface $X_{10}\subset \mathbb{P}(1,1,1,2,5)$.
\begin{thm}[{=Theorem \ref{cy Gor}}]
Let $X$ be a minimal Gorenstein $3$-fold with $K_X\equiv 0$, a nef and big Weil divisor $L$, and a Weil divisor $T\equiv 0$.  Then $|K_X+mL+T|$ gives a birational map for all $m\geq 5$.
\end{thm}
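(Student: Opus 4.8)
The plan is to follow the restriction-to-a-surface strategy of Fukuda \cite{F}, exploiting the Gorenstein hypothesis to keep the numerical bounds tight. Since $X$ is Gorenstein, $K_X$ is Cartier, and as $X$ is terminal with $K_X\equiv 0$ its singularities are isolated compound Du Val points and $K_X\sim_{\bQ}0$. Write $D_m=K_X+mL+T$; because $T\equiv 0$ we have $D_m\equiv mL$, which is nef and big, and since $X$ is $\bQ$-factorial the class $mL+T$ is $\bQ$-Cartier, so Kawamata--Viehweg vanishing gives $H^i(X,D_m)=0$ for $i>0$, together with the corresponding vanishing for the twisted ideal sheaf of a general surface section. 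It suffices to separate two general points of $X$, which yields birationality, and I would reduce this to a computation on a surface.

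First I would produce the surface to induct on. By Riemann--Roch, the vanishing above, and Miyaoka's inequality $c_2\cdot L\ge 0$, one gets $h^0(X,L)>0$, and in the generic situation $h^0(X,L)\ge 2$, so that $|L|$ contains a pencil; let $S$ be a general member. By Bertini, $S$ is irreducible and normal with at worst Du Val singularities at the points where $L$ is Cartier, while over the finitely many points where $L$ fails to be Cartier $S$ is forced to be singular and requires separate local analysis. On the minimal resolution $\sigma\colon\tilde S\to S$ one has $K_{\tilde S}=\sigma^*K_S$ (crepant at the Du Val points), and adjunction gives $D_m|_S=K_S+(m-1)L|_S+T|_S$ with $K_S\equiv L|_S$ nef and big. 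The vanishing $H^1(X,D_m-S)=0$ makes the restriction $H^0(X,D_m)\to H^0(S,D_m|_S)$ surjective; two general points of $X$ lie on distinct members of the pencil unless they lie on a common $S$, so birationality of $\Phi_{|D_m|}$ is reduced to separating two general points by $|\sigma^*(D_m|_S)|$ on $\tilde S$.

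On $\tilde S$ I would apply Reider's theorem \cite{Reider} to $K_{\tilde S}+M$, where $M=(m-1)\sigma^*L|_S+\sigma^*T|_S$ is nef and $K_{\tilde S}+M=\sigma^*(D_m|_S)$. Reider yields separation of two general points once $M^2\ge 10$ together with the mild intersection conditions ruling out the exceptional curves, and since $M^2=(m-1)^2 L^3$ and $M\cdot C=(m-1)\,L|_S\cdot C$, these thresholds read $(m-1)^2 L^3\ge 10$ and $(m-1)\,L|_S\cdot C\ge 2$. When $L^3\ge 1$ and $L|_S\cdot C\ge 1$ hold, both are met at $m=5$ since $(m-1)^2=16\ge 10$; this is exactly the margin realized by $X_{10}\subset\mathbb{P}(1,1,1,2,5)$, explaining why $5$ is optimal.

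The main obstacle is precisely the control of the Weil-but-not-Cartier behaviour of $L$ at the compound Du Val points, which is where the Gorenstein hypothesis and the structure results of Kawamata \cite{Ka=0} and Morrison \cite{M=0} are used to replace the coarse bound $17$ by $5$. Concretely, one must bound $L^3$ from below and control $L|_S\cdot C$ against the local class-group contributions appearing in the fractional part of $\sigma^*L|_S$ over the non-Cartier points, passing to $\lceil\sigma^*(D_m|_S)\rceil$ and a generalized (log) Reider statement via Kawamata--Viehweg on $\tilde S$, and check that these corrections do not eat into the Reider margin. One must also treat separately the degenerate cases where $h^0(X,L)\le 1$, cutting instead with a general member of $|2L|$ or passing to the Albanese or product structure of $X$ to reduce to a lower-dimensional situation. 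Handling these two points---the fractional self-intersections at the singularities and the small-section-space cases---is the crux of the argument.
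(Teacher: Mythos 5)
Your overall strategy (cut out a surface, restrict via Kawamata--Viehweg vanishing, finish with Reider) is also the paper's strategy, but you have misplaced the crux. In the Gorenstein case $i(X)=1$, and by Kawamata \cite[Corollary 5.2]{K2} (quoted in Section \ref{cy preliminaries}) \emph{every} Weil divisor on $X$ is then Cartier; the paper's proof opens with exactly this remark. So the ``Weil-but-not-Cartier behaviour of $L$ at the compound Du Val points,'' the fractional parts of $\sigma^*L|_S$, the generalized log-Reider statement, and the appeal to the structure results of Kawamata \cite{Ka=0} and Morrison \cite{M=0} are all phantoms here --- those structure results enter only in the non-Gorenstein cases of the later sections. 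In particular $L^3$ is a positive integer and there is no local class-group correction to control; what you designate as the main obstacle does not exist.

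The steps you defer as ``separate analysis'' are where the actual work lies, and the proposal does not carry them out. First, since $\chi(\OO_X)=0$ by Theorem \ref{cy fact}(2), Riemann--Roch only gives $h^0(L)=\lambda(L)\geq 1$, so $h^0(L)=1$ is a genuine possibility rather than a degenerate afterthought. The paper handles it by passing to $|2L|$, where $h^0(2L)=L^3+2\geq 3$, and must then prove that $|2L|$ is \emph{not} composed with a pencil of surfaces; this is a nontrivial argument on the canonical model $(Z,H)$ showing that a pencil structure would force $\dim|\pi_*S|=0$, a contradiction. Second, the Reider margin at $m=5$ is genuinely tight: restricting to a smooth member $N$ of the movable part of $|2L|$ one needs $(m-2)^2(\pi^*L)^2\cdot N\geq 10$, i.e.\ $(\pi^*L)^2\cdot N\geq 2$, and the borderline case $(\pi^*L)^2\cdot N=1$ cannot be dismissed numerically --- it is excluded by the Hodge-index-type Lemma \ref{cy 111}(3) combined once more with the canonical model. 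Your sentence about checking ``that these corrections do not eat into the Reider margin'' is precisely the statement that requires proof. Finally, in the case $\dim\Phi_{|L|}(X)\geq 1$ the paper simply invokes Fukuda's Key Lemma \cite{F} with $R=\pi^*L$, $r_0=4$, $r_1=1$, which already packages the exceptional-curve analysis in Reider's theorem that your condition ``$(m-1)L|_S\cdot C\geq 2$'' only gestures at.
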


For the convenience, we introduce the following definition.
\begin{definition}
 $(X, L, T)$ is called a {\it polarized triple} if $X$ is a minimal $3$-fold with $q(X)=0$ and $K_X\equiv 0$, $L$ is a nef and big Weil divisor, and $T$ is a numerically trivial Weil divisor on $X$.
\end{definition}

Note that we assume  $q(X)=0$ in the definition. The case that $q(X)>0$ is relatively easy and we treat it in Section \ref{cy Gor section} (see Theorem \ref{cy Gor}).

This paper is organized as follows. In Section \ref{cy preliminaries}, we recall some basic knowledge and facts. In Section  \ref{cy Gor section}, we treat Gorenstein case. We study the birationality  of polarized triples in  Section \ref{cy section birationality} and give an effective criterion for the birationality of linear systems. In the last section, to apply the birationality criterion, we estimate several quantities of polarized triples. We prove Theorems \ref{cy main1} and \ref{cy main2} in the last part.

{\bf Acknowledgment.} The author would like to express his gratitude to his supervisor Professor Yujiro Kawamata for  suggestions and encouragement.
He appreciates the very effective discussion with Professors Meng Chen and Keiji Oguiso during the preparation of this paper. Part of this paper was written during the author's visit to Fudan University and he would like to thank for the hospitality and support. The author would like to thank the anonymous reviewer for his valuable comments and suggestions to improve the explanation of the paper.

\section{Preliminaries}\label{cy preliminaries}

Throughout we work over an algebraically closed field $k$ of characteristic 0 (for instance, $k=\bC$). We adopt the standard notation in Koll\'ar--Mori \cite{KM}, and will freely use them.

Let $X$ be a minimal $3$-fold with $K_X\equiv 0$. Denote by $i(X)$ the {\it local index} of $X$, i.e. the Cartier index of $K_X$. By Kawamata \cite[Corollary 5.2]{K2}, for arbitrary Weil divisor $D$ on $X$, $i(X)D$ is a Cartier divisor.
By Kawamata \cite[Theorem 8.2]{K1}, $K_X\sim_{\bQ}0$ and we define
the {\it global index}
$$
I(X) = \min \{m \in \mathbb{N}\mid mK_X\sim 0\}. 
$$
Note that $i(X)| I(X)$.

For two linear systems $|A|$ and $|B|$, we write $|A|\preceq |B|$ if  there exists
an effective divisor $F$ such that $$|B|\supset |A|+F.$$ In particular, if $A\leq B$ as divisors, then $|A|\preceq |B|$.

\subsection{Rational map defined by a Weil divisor}\label{cy b setting}

Consider a $\bQ$-Cartier Weil divisor $D$ on $X$ with $h^0(X, D)\geq 2$. We study the rational map defined by $|D|$, say 
$$X\overset{\Phi_D}{\dashrightarrow} \bP^{h^0(D)-1}$$ which is
not necessarily well-defined everywhere. By Hironaka's big
theorem, we can take successive blow-ups $\pi: Y\rightarrow X$ such
that:
\begin{itemize}
\item [(1)] $Y$ is smooth projective;
\item [(2)] the movable part $|M|$ of the linear system
$|\rounddown{\pi^*(D)}|$ is base point free and, consequently,
the rational map $\gamma:=\Phi_D\circ \pi$ is a morphism;
\item [(3)] the support of the
union of $\pi_*^{-1}(D)$ and the exceptional divisors of $\pi$ is of
simple normal crossings.
\end{itemize}
Let $Y\overset{f}\longrightarrow \Gamma\overset{s}\longrightarrow Z$
be the Stein factorization of $\gamma$ with $Z:=\gamma(Y)\subset
\bP^{h^0(D)-1}$. We have the following commutative
diagram.\medskip

\begin{picture}(50,80) \put(100,0){$X$} \put(100,60){$Y$}
\put(170,0){$Z$} \put(170,60){$\Gamma$}
\put(112,65){\vector(1,0){53}} \put(106,55){\vector(0,-1){41}}
\put(175,55){\vector(0,-1){43}} \put(114,58){\vector(1,-1){49}}
\multiput(112,2.6)(5,0){11}{-} \put(162,5){\vector(1,0){4}}
\put(133,70){$f$} \put(180,30){$s$} \put(95,30){$\pi$}
\put(130,10){$\Phi_D$}\put(136,40){$\gamma$}
\end{picture}
\bigskip

{\bf Case $(f_{\rm{np}})$.} If $\dim(\Gamma)\geq 2$, a general
member $S$ of $|M|$ is a smooth projective surface by
Bertini's theorem. We say that $|D|$ {\it is not composed with
a pencil of surfaces}.

{\bf Case $(f_{\rm p})$.} If $\dim(\Gamma)=1$, i.e. $\dim\overline{\Phi_D(X)}=1$, a
general fiber $S$ of $f$ is an irreducible smooth projective surface
by Bertini's theorem. We may write
$$M=\sum_{i=1}^a S_i\equiv aS$$ where $S_i$ is a smooth fiber of $f$ for all $i$. We say that $|D|$ {\it is composed with
a pencil of surfaces}. It is clear that $a\geq h^0(D)-1$. Furthermore, $a=h^0(D)-1$  if  and only if $\Gamma\cong
\bP^1$, and   then we say that $|D|$ {\it is composed with
a rational pencil of surfaces}. In particular, if $q(X)=0$, then  $\Gamma\cong
\bP^1$ since $g(\Gamma)\leq q(Y)=q(X)=0$.

For another Weil divisor $D'$ satisfying $h^0(X,D')>1$,
we say that $|D|$ and $|D'|$ are {\it composed with the same pencil} if $|D|$ and $|D'|$ are composed with pencils and they define the same fibration structure $Y\rightarrow \Gamma$ on some smooth model $Y$. In particular, $|D|$ and $|D'|$ are {not composed with the same pencil} if one of them is not composed with  a pencil.

Define
$$\iota=\iota(D):=\begin{cases} 1, & \text{Case\ } (f_{\text{np}});\\
a, & \text{Case\ } (f_{\text{p}}).
\end{cases}$$
Clearly, in both cases, $M\equiv \iota S$ with $\iota\geq 1$.

\begin{definition} For both Case $(f_{\text{np}})$ and Case $(f_{\text{p}})$, we call $S$
{\it a general irreducible element of $|M|$}.
\end{definition}

We may also define ``a general irreducible element'' of a moving linear system on any surface in the similar way. 


\subsection{Reid's Riemann--Roch formula}

A {\it basket} $B$  is a collection of pairs of integers (permitting
weights), say $\{(b_i,r_i)\mid i=1, \cdots, s; b_i\ \text{is coprime
 to}\ r_i\}$.  For simplicity, we will alternatively write a basket as follows, 
 say
$$B=\{(1,2), (1,2), (2,5)\}=\{2\times (1,2), (2,5)\}.$$

Let $X$ be a  3-fold with $\bQ$-factorial terminal singularities. According to Reid
\cite{YPG},  for a Weil divisor $D$ on $X$, 
$$
\chi(D)=\chi(\OO_X)+\frac{1}{12}D(D-K_X)(2D-K_X)+\frac{1}{12}(D\cdot c_2)+\sum_Qc_Q(D),
$$
where the last sum runs over Reid's basket of orbifold points. If the orbifold point $Q$ is of type $\frac{1}{r_Q}(1,-1,b_Q)$ and $i_Q=i_Q(D)$ is the local index of divisor $D$ at $Q$ (i.e. $D\sim i_QK_X$ around $Q$, $0\leq i_Q< r$),  then
$$
c_Q(D)=-\frac{i_Q(r_Q^2-1)}{12r_Q}+\sum_{j=0}^{i_Q-1}\frac{\overline{jb_Q}(r_Q-\overline{jb_Q})}{2r_Q}.
$$
Here the symbol $\overline{\cdot}$ means the smallest residue mod $r$ and $\sum_{j=0}^{-1}:=0$.   
We can write Reid's basket as $B_X=\{(b_Q, r_Q)\}_Q$. Note that we may assume $0<b_Q\leq \frac{r_Q}{2}$.  Recall that $i(X)=\lcm\{r_Q\in B_X\}$.

Let $X$ be a minimal $3$-fold with $K_X\equiv 0$. 
Note that for arbitrary nef and big Weil divisor  $H$, Kawamata--Viehweg
vanishing theorem \cite[Theorem 1-2-5]{KMM} implies
$$h^i(H)=h^i(K_X+(H-K_X))=0$$
for all $i>0$.
For a nef and big Weil divisor $L$ and a Weil divisor $T\equiv 0$, Reid's formula gives
$$
h^0(mL+T)=\chi(\OO_X)+\frac{m^3}{6}L^3+\frac{m}{12}(L\cdot c_2)+\sum_Qc_Q(mL+T).
$$
We make some remarks on estimating this formula. Recall that by Miyaoka \cite{Mi}, $c_2$ is pseudo-effective and hence $(L\cdot c_2)\geq 0$ holds.
Also Reid's formula or Kawamata \cite[Theorem 2.4]{Ka=0} gives
\begin{align}\label{cy chi}
\chi(\OO_X)=\sum_Q\frac{r_Q^2-1}{24r_Q}.
\end{align}
We define 
$$
\lambda(L):=\frac{1}{6}L^3+\frac{1}{12}(L\cdot c_2).
$$
Note that $\lambda(L)$ is a numerical invariant of $L$. We can rewrite Reid's formula as following:
$$
h^0(mL+T)=\chi(\OO_X)+\frac{m^3-m}{6}L^3+m\lambda(L)+\sum_Qc_Q(mL+T).
$$
We have the following lemma.
\begin{lem}\label{cy lambda}
$i(X)\lambda(L)\in \mathbb{Z}_{>0}$. In particular, $\lambda(L)\geq \frac{1}{i(X)}.$
\end{lem}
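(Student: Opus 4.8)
The statement splits into positivity and integrality, and positivity is the easy half: since $L$ is nef and big we have $L^3>0$, while Miyaoka's pseudo-effectivity of $c_2$ gives $(L\cdot c_2)\ge 0$, so $\lambda(L)=\tfrac16 L^3+\tfrac1{12}(L\cdot c_2)>0$. Hence it suffices to prove $i(X)\lambda(L)\in\mathbb{Z}$; then $i(X)\lambda(L)$ is a positive integer and $\lambda(L)\ge 1/i(X)$ is immediate.

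The tempting route for integrality is to use $\chi(mL)\in\mathbb{Z}$ for all $m$ together with the fact that $\sum_Q c_Q(mL)$ is periodic in $m$ modulo $i(X)$: finite differences annihilate the periodic part and readily give $i(X)L^3\in\mathbb{Z}$. Unwinding Reid's formula for the Cartier divisor $i(X)L$ shows $i(X)\lambda(L)=\chi(i(X)L)-\chi(\OO_X)-\binom{i(X)+1}{3}L^3$, so one is reduced to $\binom{i(X)+1}{3}L^3\in\mathbb{Z}$, which follows from $i(X)L^3\in\mathbb{Z}$ when $\gcd(i(X),6)=1$. I do not expect this to close up at the primes $2$ and $3$ dividing $i(X)$, however, so the plan is instead to bypass the cubic term entirely by evaluating Reid's formula at $m=1$ on the numerically trivial twists $L+jK_X$.

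Concretely, I would apply the Riemann--Roch formula with $m=1$ and $T=jK_X$; since $K_X\equiv 0$ the cubic and $c_2$ terms depend only on the numerical class of $L$, so for every $j\in\mathbb{Z}$
$$\chi(L+jK_X)=\chi(\OO_X)+\lambda(L)+\sum_Q c_Q(L+jK_X)\in\mathbb{Z}.$$
Summing over $j=0,1,\dots,i(X)-1$ gives
$$i(X)\,\lambda(L)+\sum_Q\ \sum_{j=0}^{i(X)-1}c_Q(L+jK_X)\in\mathbb{Z}.$$
Because $i_Q(K_X)=1$ and $r_Q\mid i(X)$, the local index $i_Q(L+jK_X)=\overline{i_Q(L)+j}$ runs through every residue modulo $r_Q$ exactly $i(X)/r_Q$ times as $j$ runs over a complete residue system modulo $i(X)$. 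Therefore the inner double sum collapses to $\sum_Q\tfrac{i(X)}{r_Q}\sum_{t=0}^{r_Q-1}c_Q^{(t)}$, where $c_Q^{(t)}$ denotes Reid's contribution at local index $t$.

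The one genuinely computational step, which I expect to be the crux, is the closed form
$$\sum_{t=0}^{r_Q-1}c_Q^{(t)}=-\frac{r_Q^2-1}{24},$$
independent of $b_Q$. I would establish it using the symmetry $\overline{(r_Q-j)b_Q}=r_Q-\overline{jb_Q}$ (for $j\not\equiv 0$), which makes the summand $\overline{jb_Q}(r_Q-\overline{jb_Q})$ invariant under $j\mapsto r_Q-j$; this replaces the $b_Q$-dependent weighted double sum by $\tfrac{r_Q}{2}\sum_j\overline{jb_Q}(r_Q-\overline{jb_Q})$ and reduces everything to the elementary sums $\sum t$ and $\sum t^2$. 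Granting this identity, the collapsed sum equals $-i(X)\sum_Q\frac{r_Q^2-1}{24r_Q}=-i(X)\chi(\OO_X)$ by \eqref{cy chi}, which is an integer; hence $i(X)\lambda(L)\in\mathbb{Z}$, and combined with positivity this finishes the proof.
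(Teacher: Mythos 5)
Your proposal is correct and follows essentially the same route as the paper: sum Reid's formula over the numerically trivial twists $L+jK_X$ for $j=0,\dots,i(X)-1$, reduce the local contributions to one full period using $r_Q\mid i(X)$, and invoke the identity $\sum_{t=0}^{r_Q-1}c_Q^{(t)}=-\frac{r_Q^2-1}{24}$ together with $\chi(\OO_X)=\sum_Q\frac{r_Q^2-1}{24r_Q}$ so that everything except $i(X)\lambda(L)$ cancels against an integer. The paper proves your key identity by exactly the $j\mapsto r_Q-j$ symmetrization you describe (the symmetrized weight comes out as $\tfrac{r_Q-2}{2}$ rather than $\tfrac{r_Q}{2}$, a harmless slip in your sketch), and it phrases the global sum in terms of $h^0$ via Kawamata--Viehweg vanishing rather than $\chi$, which makes no difference to the argument.
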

\begin{proof}
For a singular point $Q$ of type $(b,r)$, note that  if  $i$ runs over $\{0,1,\cdots, r-1\}$ then so does the local index of $L+iK_X$  at $Q$. Hence we have
\begin{align*}
{}&\sum_{i=0}^{r-1}c_Q(L+iK_X)\\
={}&\sum_{i=0}^{r-1}\Big(-\frac{i(r^2-1)}{12r}+\sum_{j=0}^{i-1}\frac{\overline{jb}(r-\overline{jb})}{2r}\Big)\\
={}&-\frac{(r-1)(r^2-1)}{24}+\sum_{i=1}^{r-1}\sum_{j=0}^{i-1}\frac{\overline{jb}(r-\overline{jb})}{2r}\\
={}&-\frac{(r-1)(r^2-1)}{24}+\sum_{j=0}^{r-2}\sum_{i=j+1}^{r-1}\frac{\overline{jb}(r-\overline{jb})}{2r}\\
={}&-\frac{(r-1)(r^2-1)}{24}+\sum_{j=1}^{r-2}(r-1-j)\frac{\overline{jb}(r-\overline{jb})}{2r}\\
={}&-\frac{(r-1)(r^2-1)}{24}+\sum_{k=2}^{r-1}(k-1)\frac{\overline{kb}(r-\overline{kb})}{2r} {}& (k=r-j)\\
={}&-\frac{(r-1)(r^2-1)}{24}+\frac{1}{2}\sum_{k=1}^{r-1}((r-1-k)+(k-1))\frac{\overline{kb}(r-\overline{kb})}{2r}\\
={}&-\frac{(r-1)(r^2-1)}{24}+\frac{r-2}{2}\sum_{k=0}^{r-1}\frac{\overline{kb}(r-\overline{kb})}{2r}\\
={}&-\frac{(r-1)(r^2-1)}{24}+\frac{r-2}{2}\sum_{j=0}^{r-1}\frac{j(r-j)}{2r}\\
={}&-\frac{r^2-1}{24}.
\end{align*}
Hence by Reid's formula,
\begin{align*}
{}&\sum_{i=0}^{i(X)-1}h^0(L+iK_X)\\
={}&\sum_{i=0}^{i(X)-1}\Big(\chi(\OO_X)+\lambda(L)+\sum_Qc_Q(L+iK_X)\Big)\\
={}&i(X)\chi(\OO_X)+i(X)\lambda(L)+\sum_Q\Big(-\frac{r^2_Q-1}{24}\cdot \frac{i(X)}{r_Q}\Big)\\
={}&i(X)\lambda(L).
\end{align*}
Hence $i(X)\lambda(L)\in \mathbb{Z}$. On the other hand, $\lambda(L) {>0}$ since $L$ is nef and big. 
\end{proof}

We have the following lemma for intersection numbers.
\begin{lem}\label{cy L3}
Let $X$ be a normal projective $3$-fold with $\bQ$-factorial terminal singularities. Recall that $i(X)$ is the local index of $X$, i.e. the Cartier index of $K_X$. Then for Weil divisors $D_1$, $D_2$, and $D_3$ on $X$, $(i(X)D_1\cdot D_2\cdot D_3)\in \mathbb{Z}$. In particular, if $L$ is a nef and big Weil divisor on $X$, then $L^3\geq \frac{1}{i(X)}$.
\end{lem}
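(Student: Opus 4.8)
The plan is to prove the first assertion $(i(X)D_1\cdot D_2\cdot D_3)\in\ZZ$ and then read off the ``in particular'' for free. Indeed, once integrality is known, taking $D_1=D_2=D_3=L$ gives $i(X)L^3=(i(X)L\cdot L\cdot L)\in\ZZ$; since $L$ is nef and big we have $L^3>0$, so $i(X)L^3$ is a \emph{positive} integer, whence $i(X)L^3\geq 1$ and $L^3\geq \tfrac{1}{i(X)}$. So all the content is in the integrality statement.

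For the integrality, the two inputs I would use are: first, that $C:=i(X)D_1$ is an honest Cartier divisor (every Weil divisor on $X$ becomes Cartier after multiplying by $i(X)$, as recalled in the preliminaries); second, the crucial geometric feature that terminal $3$-folds have \emph{isolated} singularities. I would fix a resolution $\pi\colon Y\to X$ with $Y$ smooth projective and $\pi$ an isomorphism over $X\setminus\Sing X$. As $X$ is $\bQ$-factorial, each $D_i$ is $\bQ$-Cartier, so $\pi^*D_i$ is defined and the projection formula yields
\[
(C\cdot D_2\cdot D_3)_X=(\pi^*C\cdot \pi^*D_2\cdot \pi^*D_3)_Y.
\]
Writing $\pi^*D_2=\tilde D_2+\sum_j a_jE_j$ and $\pi^*D_3=\tilde D_3+\sum_j b_jE_j$, where $\tilde D_i$ denotes the strict transform, the $E_j$ are the $\pi$-exceptional prime divisors, and $a_j,b_j\in\bQ$, I would expand the right-hand side multilinearly.

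The heart of the matter, and the step I expect to require the most care, is that \emph{every} term containing a factor $E_j$ vanishes. Each such term has the shape $(\pi^*C\cdot E_j\cdot Z)_Y$ for some divisor $Z$; restricting to the surface $E_j$ it equals $(\pi^*C|_{E_j}\cdot Z|_{E_j})_{E_j}$, and because $\Sing X$ is finite the image $\pi(E_j)$ is a single point, so $\pi^*C|_{E_j}$ is the pullback of $\OO_X(C)$ from a point and is numerically trivial; hence the term is $0$. This is precisely where $C$ being genuinely Cartier (rather than merely $\bQ$-Cartier) is used: for a merely $\bQ$-Cartier $C$ the restriction $\pi^*C|_{E_j}$ would carry a nonzero discrepancy, and the vanishing would fail. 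Likewise, this is exactly where isolatedness of the singularities enters; without it the statement is simply false, as $\big(\OO(2)\cdot\OO(1)\cdot\OO(1)\big)=\tfrac12$ on $\bP(1,1,2,2)$ already shows.

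What remains after this cancellation is $(\pi^*C\cdot\tilde D_2\cdot\tilde D_3)_Y$, a triple intersection of Cartier divisors on a smooth projective $3$-fold (the strict transforms are integral divisors on the smooth $Y$, hence Cartier), which is an integer by the classical fact that intersection numbers of line bundles are integers. This gives $(i(X)D_1\cdot D_2\cdot D_3)\in\ZZ$ and completes the proof. The only genuinely delicate point is the bookkeeping in the multilinear expansion together with the clean justification of the exceptional vanishing via $\pi(E_j)$ being a point; everything else is formal.
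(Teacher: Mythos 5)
Your proof is correct and follows essentially the same route as the paper's: both use that $i(X)D_1$ is Cartier (Kawamata), pull back to a resolution of the isolated terminal singularities, kill every exceptional term because each exceptional divisor maps to a point (so the restriction of the pulled-back Cartier divisor is numerically trivial), and conclude from the integrality of intersection numbers of Cartier divisors on the smooth model, with positivity of $L^3$ giving the ``in particular''. The only cosmetic difference is that the paper trades the pullbacks $\phi^*D_2,\phi^*D_3$ for the strict transforms one at a time via the projection formula instead of expanding both multilinearly.
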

\begin{proof}
Recall that by Kawamata  \cite[Corollary 5.2]{K2}, $i(X)D_1$ is Cartier.
Take a resolution of isolated singularities $\phi: W\rightarrow X$.
We may write $K_{W}=\phi^*(K_X)+\Delta$ where
$\Delta$ is an exceptional effective $\bQ$-divisor over those
isolated terminal singularities on $X$. Denote by $D_i'$ the strict transform of $D_i$ on $W$ for $i=1,2,3$. By intersection theory,
we have 
\begin{align*}
{}&(i(X)D_1\cdot D_2\cdot D_3)_X\\
={}&(\phi^*(i(X)D_1)\cdot\phi^*(D_2)\cdot D_3')_W\\
={}&(\phi^*(i(X)D_1)\cdot D_2'\cdot D_3')_W
\end{align*}
 is an
integer. 
\end{proof}

\subsection{Some facts about minimal $3$-folds with $K\equiv 0$}\label{cy facts}

We collect some facts about minimal $3$-folds with $K\equiv 0$ proved by Kawamata \cite{Ka=0} and Morrison \cite{M=0}.

\begin{thm}[{\cite{Ka=0,M=0}}]\label{cy fact}
Let $X$ be a minimal $3$-fold with $K_X\equiv 0$. The following facts hold:
\begin{itemize}
\item[(1)] $0\leq \chi(\OO_X)\leq 4$;
\item[(2)] $\chi(\OO_X)=0$ if and only if $X$ has Gorenstein singularities;
\item[(3)] If $q(X)>0$, then $X$ is smooth;
\item[(4)] If $q(X)=0$ and $\chi(\OO_X)\geq 2$, then 
$I(X)\in\{2,3,4,6\}$;
\item[(5)] If $q(X)=0$ and $\chi(\OO_X)=1$, then 
$$I(X)\in\{2,3,4,5,6,8,10,12\};$$
\item[(6)] If $I(X)\in\{ 5, 8,10,12\}$, then $\chi(\OO_X)=1$, $q(X)=h^2(\OO_X)=0$, $i(X)=I(X)$, and the singular points can be described explicitly by Morrison \cite[Proposition 3]{M=0}.
\end{itemize}
\end{thm}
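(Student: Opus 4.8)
The plan is to combine the Riemann--Roch identity (\ref{cy chi}) with the structure of the global index-one cover of $X$.

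\emph{Part (1), lower bound, and part (2).} By (\ref{cy chi}) we have
$$\chi(\OO_X)=\sum_Q\frac{r_Q^2-1}{24r_Q},$$
and every summand is $\geq 0$, vanishing precisely when $r_Q=1$. Hence $\chi(\OO_X)\geq 0$, with equality if and only if the basket $B_X$ contains only Gorenstein points, i.e. $i(X)=1$, i.e. $K_X$ is Cartier; for terminal singularities this is equivalent to $X$ being Gorenstein. This proves (2) and the lower bound in (1). For the remaining assertions I would pass to the global index-one cover: since $K_X\sim_{\bQ}0$ with $I(X)=I$, there is a cyclic cover $\pi\colon Y\to X$ of degree $I$, \'etale in codimension one, with $\pi_*\OO_Y\cong\bigoplus_{i=0}^{I-1}\OO_X(-iK_X)$ and $K_Y=\pi^*K_X\sim 0$. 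Thus $Y$ is a Gorenstein canonical threefold with trivial canonical class carrying a faithful $\mu_I$-action with quotient $X$, and $H^p(Y,\OO_Y)=\bigoplus_{i=0}^{I-1}H^p(X,\OO_X(-iK_X))$ as a $\mu_I$-representation, the invariant part being $H^p(X,\OO_X)$.

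\emph{Part (3).} If $q(X)>0$ I would study the Albanese morphism $\alpha\colon X\to\Alb(X)$. Using $K_X\equiv 0$ together with the semipositivity of the direct images $\alpha_*\omega_{X/\Alb(X)}^{\otimes m}$, one shows $\alpha$ is surjective with connected fibers and becomes isotrivial after an \'etale base change; the resulting smooth fibration over a torus, whose fibers are smooth surfaces with trivial canonical class, forces the terminal singularities to be absent, so $X$ is smooth.

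\emph{Parts (4), (5), (6).} These reduce to a representation-theoretic analysis of the $\mu_I$-action on the Hodge cohomology of a crepant resolution $\tilde Y\to Y$. By the Beauville--Bogomolov decomposition, $\tilde Y$ is, up to a further finite \'etale cover, one of: an abelian threefold; a product of a $K3$ or abelian surface with an elliptic curve; or a strict Calabi--Yau threefold. Reading off $h^{1,0}(\tilde Y)$ and $h^{2,0}(\tilde Y)$ for each type and matching them against the eigenspace decomposition of $H^1(Y,\OO_Y)$ and $H^2(Y,\OO_Y)$ constrains $I$ to the stated finite lists and determines $q(X)$ and $h^2(\OO_X)$; since the admissible baskets $B_X$ then form a finite list (Morrison \cite{M=0}), direct inspection yields the upper bound $\chi(\OO_X)\leq 4$ in (1) and the explicit description in (6). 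The hard part is exactly this case analysis: one must match every admissible decomposition type of the index-one cover with a compatible $\mu_I$-action and a compatible terminal basket on the quotient $X$, and rule out all global indices outside $\{2,3,4,6\}$ and $\{2,3,4,5,6,8,10,12\}$. This enumeration is the substance of the Kawamata \cite{Ka=0} and Morrison \cite{M=0} classification, and is where essentially all the effort resides.
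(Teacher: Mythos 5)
You should first be aware that the paper does not reprove this theorem: its ``proof'' is a list of attributions to \cite{Ka=0} and \cite{M=0}, the only in-text argument being that (2) follows from equality (\ref{cy chi}) --- exactly your first paragraph. So your treatment of (2) and of the lower bound $\chi(\OO_X)\geq 0$ is correct and coincides with the paper. Your sketch of (3) is also in the right spirit (Kawamata's theorem that the Albanese map of a minimal variety with $K\equiv 0$ is, after \'etale base change, a fibre bundle, combined with the fact that terminal $3$-fold singularities are isolated, so a singular point would have to propagate along a positive-dimensional Albanese fibre direction).

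The genuine gaps are in the upper bound of (1) and in (4)--(6). First, the route through a crepant resolution $\tilde Y\to Y$ of the canonical cover is not available: $Y$ is a Gorenstein terminal threefold whose singularities are isolated cDV points, and a crepant morphism onto a terminal variety extracts no divisors, so a crepant resolution would have to be small; a factorial cDV point such as $x^2+y^2+z^2+w^3=0$ admits no small modification, hence no crepant resolution, and the Beauville--Bogomolov decomposition cannot be applied to any smooth model with trivial canonical class. This is also not how the cited sources argue: Kawamata and Morrison work with Reid's plurigenus formula on $X$ itself, using $h^0(mK_X)\in\{0,1\}$ with value $1$ exactly when $I(X)\mid m$, Miyaoka's pseudo-effectivity of $c_2$, the identity (\ref{cy chi}), and (in Morrison) the holomorphic Lefschetz fixed-point formula on the cover. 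Second, deducing $\chi(\OO_X)\leq 4$ by ``direct inspection'' of the finite list of admissible baskets is circular relative to those sources: Kawamata proves $0\leq\chi(\OO_X)\leq 4$ first, and Morrison's finite basket lists take that bound as input. Finally, you explicitly defer ``essentially all the effort'' to the Kawamata--Morrison enumeration, so even granting the strategy, the proposal does not contain the case analysis that constitutes the actual content of (4)--(6); at that point it is no more a proof than the paper's citation, while resting on a decomposition step that fails.
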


\begin{proof}
(1) is proved by Kawamata \cite[Theorem 3.1]{Ka=0}. (2) is a direct consequence of equality (\ref{cy chi}). (3) is proved by Kawamata \cite{K1} and \cite{Ka=0} (see \cite[Section 1]{M=0}). (4) is proved by Morrison \cite[Proposition 1, Proof of Theorem 1]{M=0} and (5) is proved by Morrison \cite[Proposition 3, Proof of Theorem 2]{M=0}. (6) is a direct consequence of (2)-(5) and Morrison \cite[Proposition 3]{M=0}. 
\end{proof}

\section{Gorenstein case}\label{cy Gor section}

 Throughout this section, we assume that $X$ is a minimal Gorenstein $3$-fold with $K_X\equiv 0$ and  $L$ is a nef and big Weil divisor on $X$. Note that $L$ is a Cartier divisor  since $i(X)=1$. Recall that we have a {\it canonical model} $\mu:(X, L)\rightarrow (Z,H)$ such that $Z$ is a $3$-fold with canonical singularities and $\mu^*K_Z=K_X$, $H$ is an ample Catier divisor with $L=\mu^*H$ (cf. \cite[Lemma 0.2]{OP}).
\begin{lem}[{cf. \cite[Lemma 1.1]{OP}}]\label{cy 111}
Let $D$ be a divisor on $X$. Then
\begin{itemize}
\item[(1)] $(D\cdot L^2)^2\geq (D^2\cdot L)(L^3)$;
\item[(2)] $D\cdot L^2\equiv D^2\cdot L \mod 2;$
\item[(3)] If $D\cdot L^2=1$ and $D^2\cdot L\geq 0$, then $ D^2\cdot L=L^3=1$.
\end{itemize}
\end{lem}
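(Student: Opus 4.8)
The plan is to reduce all three statements to the intersection theory of a single smooth surface section of $X$ and then invoke the Hodge index theorem and adjunction. Using the canonical model $\mu\colon (X,L)\to (Z,H)$ with $H$ an ample Cartier divisor, the linear system $|mH|$ is very ample for $m\gg 0$, so by Bertini a general member $S_Z\in|mH|$ is smooth and avoids the finitely many singular points of $Z$. Its preimage $S=\mu^*S_Z$ is then a smooth surface lying in the smooth locus of $X$, with $S\equiv mL$. I would fix such an $m$ and work with the restrictions $L|_S$ and $D|_S$, recording the three translations $(L|_S)^2=mL^3$, $L|_S\cdot D|_S=mL^2\cdot D$, and $(D|_S)^2=mL\cdot D^2$, all immediate from $S\equiv mL$.

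For (1), since $(L|_S)^2=mL^3>0$ (as $L$ is big), the Hodge index theorem on the smooth surface $S$ gives $(L|_S)^2\,(D|_S)^2\le (L|_S\cdot D|_S)^2$. Substituting the three translations yields $m^2(L^3)(L\cdot D^2)\le m^2(L^2\cdot D)^2$, and dividing by $m^2$ gives exactly $(D\cdot L^2)^2\ge (D^2\cdot L)(L^3)$.

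For (2), I would apply adjunction on $S$. Because $K_X\equiv 0$ and $S\equiv mL$, the canonical class satisfies $K_S\equiv (K_X+S)|_S\equiv mL|_S$ numerically, so intersection numbers against $K_S$ may be computed against $mL|_S$. For the divisor $C=D|_S$ the integer $C\cdot(C+K_S)=C^2+C\cdot K_S$ is even, as it equals $2p_a(C)-2$; translating the two terms gives $C^2+C\cdot K_S\equiv m(D^2\cdot L)+m^2(D\cdot L^2)\pmod{2}$. Choosing $m$ odd (possible since $|mH|$ is very ample for all large $m$) this congruence collapses to $(D^2\cdot L)+(D\cdot L^2)\equiv 0\pmod{2}$, which is precisely (2).

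Finally (3) is a formal consequence of (1) and (2): part (1) gives $(D^2\cdot L)(L^3)\le (D\cdot L^2)^2=1$, while $L^3\ge 1$ because $L$ is a nef and big Cartier divisor and hence $L^3$ is a positive integer; by (2), $D^2\cdot L\equiv D\cdot L^2=1\pmod{2}$ forces the nonnegative integer $D^2\cdot L$ to be odd, hence $\ge 1$. Two integers that are each $\ge 1$ and whose product is $\le 1$ must both equal $1$. The only genuine work lies in producing the smooth surface $S$ in the correct numerical class and in verifying that the numerical hypothesis $K_X\equiv 0$ (rather than linear triviality) is enough, since every intersection number used depends only on numerical classes; once $S$ is in hand, Hodge index and adjunction do all the work. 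I expect the construction of $S$ and the parity bookkeeping in (2) to be the steps demanding the most care.
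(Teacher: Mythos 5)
Your overall strategy---Hodge index on a surface section for (1), a parity/adjunction computation for (2), and combining the two for (3)---is exactly the standard argument; the paper itself gives no independent proof but simply defers to \cite[Lemma 1.1]{OP}, remarking that $K_X\equiv 0$ suffices there, and your write-up is in substance that proof (Oguiso--Peternell get the parity in (2) by Riemann--Roch on the threefold, computing that $\chi(D+L)-\chi(D)-\chi(L)+\chi(\OO_X)=\tfrac12(D^2\cdot L+D\cdot L^2)$ is an integer, which is marginally slicker than your surface adjunction but equivalent).

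One step of your construction is wrong as stated: you assert that a general $S_Z\in|mH|$ avoids ``the finitely many singular points of $Z$.'' The canonical model $Z$ has canonical, not terminal, singularities, and canonical $3$-fold singularities need not be isolated --- $\mu$ may contract a divisor $E$ with $L^2\cdot E=0$ onto a curve along which $Z$ is cDV-singular, so a general member of $|mH|$ necessarily meets $\Sing Z$ and its pullback need not be smooth. The repair is easy and makes the detour through $Z$ unnecessary: since $L$ is nef and big and $L-K_X\equiv L$, the base point free theorem gives that $|mL|$ is free on $X$ for $m\gg 0$, and the singularities of $X$ itself are terminal, hence isolated; so a general $S\in|mL|$ is a smooth surface contained in the smooth locus of $X$ by Bertini, and all of your intersection-theoretic translations, the Hodge index inequality, and the adjunction parity (with $m$ odd, and $D$ Cartier because $X$ is Gorenstein so $i(X)=1$) go through verbatim on this $S$.
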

\begin{proof}
See the proof of \cite[Lemma 1.1]{OP}. Note that $K_X\equiv 0$ is sufficient in the proof. 
\end{proof}

We prove Theorem \ref{cy main1} for the Gorenstein case. It is  a direct generalization of Fukuda \cite{F} and Oguiso--Peternell \cite{OP}, and we follow their ideas.
\begin{thm}\label{cy Gor}
Let $X$ be a minimal Gorenstein $3$-fold with $K_X\equiv 0$, a nef and big Weil divisor $L$, and a Weil divisor $T\equiv 0$.  Then $|K_X+mL+T|$ gives a birational map for all $m\geq 5$.
\end{thm}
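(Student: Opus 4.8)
The plan is to prove that $\Phi_{|K_X+mL+T|}$ is generically injective for $m\ge 5$, which in characteristic $0$ forces it to be birational onto its image; concretely I would separate two general points $x_1,x_2$ by a section of $|K_X+mL+T|$. As a first reduction I would pass to the canonical model $\mu\colon(X,L)\to(Z,H)$. Since $\mu$ is crepant and birational with $L=\mu^*H$, we have $\mu_*(K_X+mL+T)=K_Z+mH+\mu_*T$ together with an isomorphism on global sections, so the two maps have the same birational behaviour. The gain is decisive: on $Z$ the polarization $H$ is \emph{ample} and Cartier, so $H\cdot C\ge 1$ for every curve and $H^2\cdot W\ge 1$ for every surface $W$, which eliminates all the curves on which $L$ is numerically trivial (the $\mu$-exceptional locus) that would otherwise spoil every numerical estimate.

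Next I construct the auxiliary surface, keeping Riemann--Roch computations on $X$ where Reid's formula applies. Because $X$ is Gorenstein we have $\chi(\OO_X)=0$ with empty basket, so $h^0(L)=\lambda(L)\ge 1$ (Lemma~\ref{cy lambda}) and $h^0(2L)=L^3+2\lambda(L)\ge 3$; hence the least $k\in\{1,2\}$ with $h^0(kL)\ge 2$ exists. Let $S$ be a general irreducible element of the movable part of $|kH|$ on $Z$ in the sense of Section~\ref{cy b setting}; a general such member carries at worst Du Val singularities, and on its minimal (crepant) resolution $\sigma\colon\widetilde S\to S$ surface theory applies with the nef and big divisor $\sigma^*(H|_S)$. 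Two cohomological facts drive the argument. First, since $(m-k)L+T\equiv(m-k)L$ is nef and big for $m\ge 5$, Kawamata--Viehweg vanishing makes the restriction
\[
H^0\bigl(K_Z+mH+\mu_*T\bigr)\longrightarrow H^0\bigl((K_Z+mH+\mu_*T)|_S\bigr)
\]
surjective. Second, adjunction gives $(K_Z+mH+\mu_*T)|_S=K_S+(m-k)H|_S+(\mu_*T)|_S$, which pulls back on $\widetilde S$ to $K_{\widetilde S}+N$ with $N\equiv(m-k)\sigma^*(H|_S)$ nef.

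The separation of $x_1,x_2$ then proceeds by two mechanisms. If they lie on distinct general surfaces $S$, I observe that $K_X+(m-k)L+T$ is effective, since its Euler characteristic equals $\tfrac13N_0^{\,3}+\tfrac1{12}(N_0\cdot c_2)>0$ for the nef and big $N_0=(m-k)L+T$; hence $|K_X+mL+T|\succeq|kL|$ and $x_1,x_2$ are already separated by $\Phi_{|kL|}$. If instead they lie on a common general $S$, the surjectivity above reduces me to separating them inside $\widetilde S$ by $|K_{\widetilde S}+N|$, and here I apply Reider's theorem \cite{Reider}: one computes $N^2=(m-k)^2\,kH^3\ge(m-k)^2k$, which is $\ge 10$ once $m\ge 5$ for both $k=1$ (where $(m-1)^2\ge 16$) and $k=2$ (where $2(m-2)^2\ge 18$), while $N\cdot C=(m-k)\,(H\cdot\mu_*\sigma_*C)\ge m-k\ge 3$ for every $\sigma$-non-exceptional curve $C$. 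Thus the only Reider configurations that can obstruct $2$-point separation are the finitely many $(-2)$-curves contracted by $\sigma$, and a general pair $x_1,x_2$ avoids lying on a common such curve.

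The step I expect to be the main obstacle is precisely the verification of Reider's numerical hypotheses: ruling out, for a \emph{general} pair of points, an effective curve $E$ with $N\cdot E\le 2$. These low-degree curves are exactly the images of the exceptional loci, and confining them to rigid configurations that general points avoid is where the explicit structure of Gorenstein terminal/canonical singularities—and the passage to the ample model $Z$, which guarantees $H^2\cdot S>0$ even for a fibre in the pencil case—must be used. It is this analysis together with the threshold $N^2\ge10$ in the extremal case $H^3=1$ that pins the constant at $5$ and makes it sharp, as witnessed by $X_{10}\subset\PPP(1,1,1,2,5)$, where $H^3=1$ and $|4H|$ fails to be birational; this is the generalization of \cite{F} and \cite{OP} advertised before the statement.
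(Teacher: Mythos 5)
Your overall architecture (restrict to a surface cut out by a small multiple of $L$, use Kawamata--Viehweg vanishing to make the restriction surjective, then invoke Reider) is the same as the paper's Case 2, but the proposal stops exactly at the point where the actual work lies, and the step you defer is not a technicality --- it is the whole content of the proof. Concretely, your Reider input $N^2=(m-k)^2\,kH^3$ presupposes that the movable part of $|kH|$ equals $|kH|$, is not composed with a pencil, and that $H^2\cdot S=kH^3$. None of this is automatic. In the hardest case $h^0(L)=1$ (so $k=2$) with $H^3=1$ --- realized by $X_{10}\subset\mathbb{P}(1,1,1,2,5)$ --- what one can say a priori is only $N^2=(m-2)^2\,(H^2\cdot S)$ with $H^2\cdot S\geq 1$, which at $m=5$ gives $N^2\geq 9<10$, so Reider does not apply. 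The paper's proof is devoted precisely to excluding $H^2\cdot S=1$: it first rules out $|2L|$ being composed with a pencil, and then rules out $(\pi^*L)^2\cdot N=1$, in both cases via the Hodge-index-type Lemma~\ref{cy 111} (which forces $L^3=L^2\cdot \pi_*N=L\cdot(\pi_*N)^2=1$) followed by a contradiction on the canonical model, where $h^0(H)=H^3=1$ forces $|H|=\{G\}$ for a single irreducible surface and hence $\dim|\pi_*N|=0$. You explicitly flag this verification as ``the main obstacle'' but do not carry it out, so the proposal does not establish the bound $m\geq 5$; as written it only yields a weaker bound (or none in the pencil case).

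Two secondary gaps. First, you take $S$ inside $Z$ and assert that a general member of the \emph{movable part} of $|kH|$ has at worst Du Val singularities; Bertini gives no such control at base points of the movable part, which is why the paper works on a resolution $Y$ where the movable part is base point free and the general member is smooth (and why the restriction--vanishing argument is run on $Y$, not on $Z$, keeping all divisors Cartier and the pair log smooth). Second, when $q(X)>0$ (allowed here, since Gorenstein includes the smooth case) the system $|kL|$ can be composed with an irrational pencil, and then separating two general points lying on \emph{different} general members is not achieved by ``$|K_X+mL+T|\succeq|kL|$'' alone; one needs an argument as in Lemma~\ref{cy b S2}(2). The paper sidesteps both issues at once by disposing of the entire range $\dim\Phi_{|L|}(X)\geq 1$ with Fukuda's Key Lemma applied to $\pi^*L$ on a resolution, reserving the hands-on analysis for the case $h^0(L)=1$ only. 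Also, a small slip: $\chi(K_X+N_0)=\tfrac16 N_0^3+\tfrac1{12}(N_0\cdot c_2)$, not $\tfrac13 N_0^3+\tfrac1{12}(N_0\cdot c_2)$; the positivity you need still holds.
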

\begin{proof}
Note that $L$ and $T$ are Cartier divisors since $i(X)=1$. 

{\bf Case 1.} $\dim \Phi_{|L|}(X)\geq 1.$

Take a resolution $\pi: Y\rightarrow X$. Consider the linear system $|K_Y+m\pi^*L+\pi^*T|$. Note that 
$$
\dim \Phi_{|\pi^*L|}(Y)=\dim \Phi_{|L|}(X)\geq 1.
$$
By \cite[Key Lemma]{F} with $R=\pi^*L$, $r_0=4$, and $r_1=1$, $|K_Y+m\pi^*L+\pi^*T|$ gives a birational map for all $m\geq 5$. So $|K_X+mL+T|$ gives a birational map for all $m\geq 5$.
\smallskip

{\bf Case 2.} $\dim \Phi_{|L|}(X)\leq 0.$

In this case, since $h^0(L)>0$ by Riemann--Roch formula, we have  $h^0(L)=1$. By Riemann--Roch formula again,
$$
h^0(2L)=\frac{1}{6}(2^3-2)L^3+2h^0(L)=L^3+2.
$$

First, we assume that $|2L|$ is composed with a pencil of surfaces. Set $D:=2L$ and keep the same notation as in Subsection \ref{cy b setting}.
Then we have 
$$2\pi^*(L)\geq M\equiv aS \geq (h^0(2L)-1)S=(L^3+1)S.$$ 
Thus we have 
$2L^3\geq
(L^3+1)(\pi^*(L)^2\cdot S)$. This implies that $L^2\cdot \pi_*S=\pi^*(L)^2\cdot S=1$ since $\pi^*(L)^2\cdot S>0$.
On the other hand, $L \cdot (\pi_*S)^2=\pi^*(L) \cdot \pi^*\pi_*S \cdot  S\geq 0$. Hence by Lemma \ref{cy 111}(3), $L \cdot (\pi_*S)^2=L^3=1$. Hence $M\equiv (L^3+1)S=2S$, in particular, $|2L|$ is composed with a rational pencil (see Case $(f_{\rm p})$ in subsection 2.1). Consider the canonical model $(Z, H)$. Since $h^0(H)=h^0(L)=1$ and $H^3=L^3=1$, there exists an irreducible surface $G$ such that $|H|=\{G\}$. Denote by $G'$ the strict transform of $G$. Then we may write $2L\sim 2G'+2E$ for some $\mu$-exceptional divisor $E$. Note that $\text{Mov}|2L|=|2\pi_*S|$, hence $2\pi_*S\sim 2G'+F$ for some effective $\mu$-exceptional divisor $F$. Note that $|2\pi_*S|$ is a rational pencil by construction, which means that, every element in $|2\pi_*S|$ can be written as the form $\pi_*S_1+\pi_*S_2$ with some $S_1\sim S_2\sim S$. Hence $\pi_*S_1+\pi_*S_2= 2G'+F$ (not only linear equivalence but equality). Hence $\pi_*S_1=G'+E'$ for some effective $\mu$-exceptional divisor $E'$ by the irreduciblity of $G'$. But this implies $\dim |\pi_* S|=0$, a contradiction.

Hence $|2L|$ is not composed with a pencil of surface. Set $D:=2L$ and keep the same notation as in Section \ref{cy b setting}.
Then we have $2\pi^*(L)= |M|+F$ such that $|M|$ is base point free. Consider a smooth element $N$ in $|M|$. Note that $|K_X+mL+T|$ gives a birational map if so does the restriction $|K_Y+N+\pi^*((m-2)L+T)||_N$ by Lemma \ref{cy b reduction} and birationality principle (cf. \cite[Lemma 1.3]{OP}). On the other hand, Kawamata--Viehweg vanishing theorem and adjunction formula give $$|K_Y+N+\pi^*((m-2)L+T)||_N=|K_N+\pi^*((m-2)L+T)|_N|.$$ 
Reider's theorem (cf. \cite{Reider}) implies that $|K_N+\pi^*((m-2)L+T)|_N|$ gives a birational map for $m\geq 5$ if $(\pi^*L)^2\cdot N\geq 2$. Now we assume that $(\pi^*L)^2\cdot N=1$, then Lemma \ref{cy 111}(3) implies that $L^3=L^2\cdot \pi_*N=L\cdot (\pi_*N)^2=1$. Consider the canonical model $(Z, H)$. Since $h^0(H)=H^3=1$, and $L^2\cdot \pi_*N=1$, a similar argument implies 
 $\dim |\pi_*N|=0$, a contradiction.

We completed the proof. 
\end{proof}

By Theorems \ref{cy Gor} and   \ref{cy fact}(2)(3), to prove Theorem \ref{cy main1}, we only need to consider polarized triples $(X,L,T)$ with $\chi(\OO_X)>0$.

\section{Birationality criterion}\label{cy section birationality}

In this section, we give a criterion for the birationality of polarized triples. The methods using in this section are mainly developed in Chen--Chen \cite{CC2} and Chen \cite{C}, and latter modified in Chen--Jiang \cite{CJ}.

\subsection{Main reduction}
 
Firstly, we reduce the birationality problem on $X$ to that on its smooth model $Y$.

\begin{lem}[{cf. \cite[Lemma 2.5]{C}}]\label{cy b Hn} Let $W$ be a normal projective variety on which there is
an integral Weil $\bQ$-Cartier divisor $D$. Let $h: V\longrightarrow
W$ be any resolution of singularities. Assume that $E$ is an
effective exceptional $\bQ$-divisor on $V$ with $h^*(D)+E$ a Cartier
divisor on $V$. Then
$$h_*\OO_V(h^*(D)+E)=\OO_W(D)$$
where $\OO_W(D)$ is the reflexive sheaf corresponding to the Weil
divisor $D$.
\end{lem}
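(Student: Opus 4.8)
The plan is to realize both $\OO_W(D)$ and $h_*\OO_V(h^*(D)+E)$ as subsheaves of the constant sheaf $\KK_W$ of rational functions on $W$, and then to check that they are cut out by the same valuative inequalities, verifying the two inclusions separately. Set $G:=h^*(D)+E$, an integral Cartier divisor on $V$ by hypothesis. Since $h$ is birational we identify $\KK_V=\KK_W$; then $h_*\OO_V(G)$ is the subsheaf of $\KK_W$ whose sections over an open $U\subseteq W$ are the rational functions $\varphi$ with $\mathrm{div}_V(h^*\varphi)+G\geq 0$ on $h^{-1}(U)$, while $\OO_W(D)$ consists of those $\varphi$ with $\mathrm{div}_W(\varphi)+D\geq 0$ on $U$. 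It therefore suffices to prove these two conditions are equivalent.

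For the inclusion $\OO_W(D)\subseteq h_*\OO_V(G)$, I would use that the pullback of an effective $\bQ$-Cartier divisor is effective: choosing a positive integer $r$ with $rD$ Cartier, if $\mathrm{div}_W(\varphi)+D\geq 0$ then $\mathrm{div}_W(\varphi^r)+rD$ is an effective Cartier divisor, so its pullback is effective, and dividing by $r$ gives $\mathrm{div}_V(h^*\varphi)+h^*(D)\geq 0$ as $\bQ$-divisors. Adding the effective divisor $E\geq 0$ yields $\mathrm{div}_V(h^*\varphi)+G\geq 0$; as $G$ is integral this shows $h^*\varphi$ is a section of $\OO_V(G)$, i.e. $\varphi\in h_*\OO_V(G)$. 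Note this direction uses only $E\geq 0$, not exceptionality.

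For the reverse inclusion I would test the inequality $\mathrm{div}_W(\varphi)+D\geq 0$ one prime divisor at a time. Given a prime divisor $P$ on $W$, let $\Gamma$ be its strict transform on $V$, which exists as a prime divisor since $h$ is birational. Because $W$ is regular at the generic point of $P$ (normality gives $R_1$) and $h$ induces an isomorphism of the local rings at the generic points of $\Gamma$ and $P$, the valuations agree: $\mathrm{ord}_\Gamma(h^*\varphi)=\mathrm{ord}_P(\varphi)$, and $\mathrm{ord}_\Gamma(h^*(D))$ equals the coefficient of $D$ along $P$. Crucially $\mathrm{ord}_\Gamma(E)=0$, because $E$ is $h$-exceptional while $\Gamma$ dominates $P$ and hence is not exceptional. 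Thus effectivity of $\mathrm{div}_V(h^*\varphi)+G$ along $\Gamma$ reads exactly as effectivity of $\mathrm{div}_W(\varphi)+D$ along $P$; since $P$ was arbitrary, $\varphi\in\OO_W(D)$.

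The one genuinely delicate point, and where I would spend the most care, is the bookkeeping of multiplicities along the two types of prime divisors of $V$: one must confirm that exceptional divisors impose no extra condition in the reverse inclusion (handled by $\mathrm{ord}_\Gamma(E)=0$ on strict transforms), and that the pullback of the merely $\bQ$-Cartier divisor $D$ has the expected coefficient along a strict transform. Everything else is formal. Since $\OO_W(D)$ is reflexive one could alternatively check the identification on a big open subset over which $h$ is an isomorphism and whose complement has codimension $\geq 2$, but the direct valuative comparison above avoids invoking reflexivity altogether.
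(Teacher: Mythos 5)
The paper does not prove this lemma at all; it simply cites \cite[Lemma 2.5]{C}, so there is no in-paper argument to compare against. Your valuative proof is correct and complete: the identification of both sheaves as subsheaves of the constant sheaf of rational functions, the use of $rD$ Cartier to pull back the effectivity of $\mathrm{div}_W(\varphi)+D$, and the DVR comparison $\mathrm{ord}_\Gamma=\mathrm{ord}_P$ at strict transforms (where $\mathrm{ord}_\Gamma(E)=0$ by exceptionality) are exactly the right ingredients, and you correctly isolate which hypothesis ($E\geq 0$ versus $E$ exceptional) each inclusion uses. This is the standard argument behind the cited reference, so nothing further is needed.
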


\begin{lem}\label{cy b reduction}
Let $X$ be a normal projective variety with $\bQ$-factorial terminal singularities, $D$ be a Weil divisor on $X$ and 
$\pi:Y\longrightarrow X$ be a resolution. Then 
$\Phi_{|K_X+D|}$ is birational if and only if so is
$\Phi_{|K_Y+\roundup{\pi^*(D)}|}$.
\end{lem}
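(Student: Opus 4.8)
The plan is to identify the two linear systems at the level of global sections by pushing forward along $\pi$ and invoking Lemma \ref{cy b Hn}. First I would use that $X$ has terminal singularities to write $K_Y=\pi^*K_X+A$, where $A$ is an effective $\pi$-exceptional $\bQ$-divisor (all discrepancies are strictly positive). Since $X$ is $\bQ$-factorial and $K_X$ is $\bQ$-Cartier, the integral Weil divisor $K_X+D$ is $\bQ$-Cartier, so I can form $\pi^*(K_X+D)=\pi^*K_X+\pi^*D$. Rewriting,
$$K_Y+\roundup{\pi^*D}=\pi^*(K_X+D)+E,\qquad E:=A+\big(\roundup{\pi^*D}-\pi^*D\big).$$

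The key point I would then check is that $E$ is an effective exceptional $\bQ$-divisor. The term $A$ is effective and exceptional by terminality. The term $\roundup{\pi^*D}-\pi^*D$ is visibly effective (coefficients in $[0,1)$), and it is supported on the exceptional locus because $D$ is an \emph{integral} Weil divisor: over the generic point of each component of $D$ the morphism $\pi$ is an isomorphism, so each strict transform appears in $\pi^*D$ with integer coefficient, forcing all fractional parts of $\pi^*D$ onto exceptional divisors. With $E\geq 0$ exceptional and $\pi^*(K_X+D)+E=K_Y+\roundup{\pi^*D}$ an integral divisor on the smooth variety $Y$ (hence Cartier), the hypotheses of Lemma \ref{cy b Hn} are met with $W=X$ and the Weil $\bQ$-Cartier divisor $K_X+D$, yielding
$$\pi_*\OO_Y\big(K_Y+\roundup{\pi^*D}\big)=\OO_X(K_X+D).$$

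Taking global sections gives a canonical isomorphism $H^0\big(Y,K_Y+\roundup{\pi^*D}\big)\cong H^0(X,K_X+D)$, realized concretely by pullback of sections along $\pi$ (legitimate since $E\geq 0$ provides the inclusion of sheaves). Consequently $\Phi_{|K_Y+\roundup{\pi^*D}|}=\Phi_{|K_X+D|}\circ\pi$ as rational maps, with the same image. Because $\pi$ is birational, the induced extension of function fields is an isomorphism, so the generic degree onto the common image agrees for both maps; hence $\Phi_{|K_X+D|}$ is birational if and only if $\Phi_{|K_Y+\roundup{\pi^*D}|}$ is.

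I expect the main obstacle to be the bookkeeping that $\roundup{\pi^*D}-\pi^*D$ is genuinely exceptional: this is exactly where the integrality of the Weil divisor $D$ (and the correct interpretation of $\pi^*D$ as the pullback of a $\bQ$-Cartier divisor) is used, and it is what lets $E$ fit the format required by Lemma \ref{cy b Hn}. Everything else — positivity of discrepancies from terminality, and Cartierness of the integral class on the smooth model $Y$ — is routine.
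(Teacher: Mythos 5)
Your argument is correct and follows essentially the same route as the paper's proof: decompose $K_Y+\roundup{\pi^*D}=\pi^*(K_X+D)+(\text{effective exceptional})$ using terminality and the integrality of $D$, apply Lemma \ref{cy b Hn} to identify the pushforward with $\OO_X(K_X+D)$, and conclude via the induced factorization of the rational maps. Your extra verification that the fractional part $\roundup{\pi^*D}-\pi^*D$ is genuinely exceptional is a detail the paper leaves implicit, but it is the same proof.
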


\begin{proof} Recall that
$$K_{Y}=\pi^*(K_X)+E_{\pi}$$ 
where $E_{\pi}$ is an effective $\bQ$-Cartier $\bQ$-divisor since $X$ has at worst terminal singularities.
We have
\begin{align*}
{}& K_Y+\roundup{\pi^*(D)}\\
={}& \pi^*(K_X)+ E_{\pi}+\pi^*(D)+E\\
={}& \pi^*(K_X+D)+ E_{\pi}+E
\end{align*}
where $E_{\pi}+E$ is an effective $\bQ$-divisor on $Y$ exceptional over $X$. Lemma \ref{cy b Hn}
implies $$\pi_*\OO_Y(K_Y+\roundup{\pi^*(D)})=\OO_X(K_X+D).$$
Hence $\Phi_{|K_X+D|}$ is birational if and only if so is
$\Phi_{|K_Y+\roundup{\pi^*(D)}|}$. 
\end{proof}


\subsection{Key theorem}\label{cy b keythm}
Let $(X, L, T)$ be a polarized triple. Take a Weil divisor $L_0$ such that $L_0\equiv L$. Suppose that $h^0(m_0L_0)\geq
2$ for some integer $m_0>0$.  Suppose that $m_1\geq m_0$ is an integer with $h^0(m_1L_0)\geq
2$ and that $|m_1L_0|$ and $|m_0L_0|$ are not composed with the same pencil. Note that in  application, if $|m_0L_0|$ is not composed with a pencil, we can just take $m_1=m_0$.

Set $D:=m_0L_0$ and keep the same notation as in Subsection \ref{cy b setting}. We may modify the resolution $\pi$ in Subsection \ref{cy b setting} such
that the movable part $|M_{m}|$ of $|\rounddown{\pi^*(mL_0)}|$ is base point free for all  $m_0\leq m\leq m_1$. 
Set $\iota_{m}:=\iota(mL_0)$ defined in Subsection \ref{cy b setting}. Recall that, for any integer $m$ with $h^0(mL_0)>1$, 
$$\iota_m=\begin{cases} 1, &\text{if } |mL_0| \text{ is not composed with a pencil}; \\
h^0(mL_0)-1, &\text{if }|mL_0| \text{ is composed with a pencil}.
\end{cases}$$

Pick a general irreducible element $S$ of $|M_{m_0}|$.  
We have 
$$m_0\pi^*(L_0)=\iota_{m_0}S+F_{m_0}$$ for some effective
$\bQ$-divisor $F_{m_0}$. 
 In particular, we see that 
$$\frac{m_0}{\iota_{m_0}}\pi^*(L_0)-S\sim_{\bQ}  \text{effective}\ \bQ\text{-divisor}.$$
Define the real number 
$$\mu_0=\mu_0(|S|):=\text{inf}\{t\in \bQ^+ \mid t\pi^*(L_0)-S\sim_{\bQ} \text{effective}\ \bQ\text{-divisor}\}.$$

\begin{remark}\label{cy upper mu0}Clearly, we have $0<\mu_0\leq \frac{m_0}{\iota_{m_0}}\leq m_0$. 
For all $k$ such that $|kL_0|$ and $|m_0L_0|$ are composed with the same pencil, we have 
$$k\pi^*(L_0)=\iota_kS+F_{k}$$ for some effective
$\bQ$-divisor $F_{k}$, and hence $\mu_0\leq  \frac{k}{\iota_k}$.
\end{remark}

By the assumption on $|m_1L_0|$, we know that $|G|=|{M_{m_1}}|_{S}|$ is a base point free linear system on $S$ and $h^0(S, G)\geq 2$. Denote by
$C$ a general irreducible element of $|G|$. Note that since $K_X\equiv 0$, $K_Y$ is pseudo-effective and hence $g(C)\geq 1$. Since $m_1\pi^*(L_0)\geq M_{m_1}$, we have
$$m_1\pi^*(L_0)|_S\equiv C+H$$ 
where $H$ is an effective
$\bQ$-divisor on $S$. 

We define two numbers which will be the key
invariants accounting for the birationality of  $\Phi_{|K_X+mL+T|}$.
They are
\begin{align*}
\zeta{}&:=(\pi^*(L)\cdot C)_Y=(\pi^*(L_0)\cdot C)_Y=(\pi^*(L_0)|_S\cdot C)_S\ \text{and}\\
\epsilon(m){}&:=(m-\mu_0-m_1)\zeta.
\end{align*}
Note that $\zeta$ and $\epsilon(m)$ are birational invariants by projection formula. 
Hence we can modify $\pi$ if necessary. Also note that $\zeta>0$ since $L$ is nef and big and $C$ is free. 

While studying the birationality of $\Phi_{|K_X+mL+T|}$, we will always check that the linear system
$\Lambda_m:=|K_Y+\roundup{\pi^*(mL+T)}|$ satisfies the
following assumption for some integer $m>0$.

\begin{assum}\label{cy b asum}Keep the notation as above.
\begin{itemize}
\item [(1)] The linear system $\Lambda_m$
distinguishes different general irreducible elements of $|M_{m_0}|$
(namely, $\Phi_{\Lambda_m}(S')\neq \Phi_{\Lambda_m}(S'')$ for two
different general irreducible elements $S'$, $S''$ of $|M_{m_0}|$).

\item [(2)] The linear system ${\Lambda_m}|_{S}$ distinguishes different general irreducible elements of the linear system $|G|=|{M_{m_1}}|_{S}|$ on
$S$.
\end{itemize}
\end{assum}

The following is the key theorem in this section.

\begin{thm}\label{cy b mb}Let $(X, L, T)$ be a polarized triple. Keep the notation as above. Let $m>0$ be an integer. If
Assumption \ref{cy b asum} is satisfied and $\epsilon(m)>2$, then
$\Phi_{|K_X+mL+T|}$ is birational onto its image.
\end{thm}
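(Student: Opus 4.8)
The plan is to prove birationality by the standard "distinguish points via separating tangent directions" technique, reducing everything along the flag $Y \to S \to C$ built in Subsection \ref{cy b keythm}. The overall strategy is the following: to show $\Phi_{\Lambda_m}$ is birational, where $\Lambda_m = |K_Y + \roundup{\pi^*(mL+T)}|$, it suffices by Lemma \ref{cy b reduction} to separate two general points $y_1, y_2$ on $Y$. Since the general irreducible elements $S$ of $|M_{m_0}|$ sweep out $Y$ (they are either a genuine $2$-parameter family or the fibers of a pencil), I would first use Assumption \ref{cy b asum}(1) to reduce to the case where $y_1, y_2$ lie on the same general $S$. Then, since the general irreducible elements $C$ of $|G| = |M_{m_1}|_S|$ sweep out $S$, I would use Assumption \ref{cy b asum}(2) to reduce further to the case where $y_1, y_2$ lie on the same general curve $C$. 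Both reduction steps are the usual restriction argument: one shows that the restriction map on global sections of $\Lambda_m$ to $S$ (resp.\ to $C$) is surjective onto the relevant adjoint system, which is where Kawamata--Viehweg vanishing enters.

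\textbf{Step 1 (separating the surfaces $S$).} I would write $K_Y + \roundup{\pi^*(mL+T)} \geq K_Y + S + \roundup{\pi^*(mL+T) - S}$ using the fact that $S \leq M_{m_0} \leq \rounddown{\pi^*(m_0 L_0)} \leq \pi^*(mL+T)$ for $m$ large, so that after subtracting $S$ the residual $\bQ$-divisor $\pi^*(mL+T) - S$ is big (indeed nef-and-big up to the exceptional correction, since $\mu_0 \leq m_0 \leq m$ guarantees $\pi^*(mL) - S$ remains positive). Vanishing then gives that $H^0(Y, K_Y + \roundup{\pi^*(mL+T)}) \to H^0(S, \cdot)$ is surjective and, combined with the pencil/non-pencil structure, that $\Lambda_m$ separates distinct general $S$'s; this is exactly the content of Assumption \ref{cy b asum}(1), which we are assuming holds.

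\textbf{Step 2 (separating the curves $C$, then points on $C$).} Restricting to a fixed general $S$, adjunction gives $(K_Y + \roundup{\pi^*(mL+T)})|_S = K_S + \roundup{\pi^*(mL+T)}|_S$ up to the rounding correction, and I would again peel off one copy of $C$: write the restricted system as dominating $K_S + C + (\text{big remainder})$, so that by vanishing on $S$ the sections restrict surjectively to $C$, and Assumption \ref{cy b asum}(2) lets us separate distinct $C$'s. Finally, on the curve $C$ itself, the restricted adjoint system is $K_C + D_C$ where $D_C$ is the restriction to $C$ of $\roundup{\pi^*(mL+T)}$ minus the pieces already used; its degree is at least $\deg(\pi^*((m - \mu_0 - m_1)L)|_C) = (m - \mu_0 - m_1)\zeta = \epsilon(m)$. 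A line bundle of degree $> 2$ on a curve is very ample (separates points and tangent vectors), i.e.\ $|K_C + D_C|$ is birational precisely when $\deg D_C > 2$, which is the hypothesis $\epsilon(m) > 2$. Since $g(C) \geq 1$, the adjunction $K_C + D_C$ only helps, so $\epsilon(m) > 2$ forces $|K_C + D_C|$ to separate the two points $y_1, y_2$ on $C$.

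The main obstacle I anticipate is bookkeeping the rounding and exceptional divisors so that the three successive restrictions (to $S$, to $C$, to points of $C$) really produce the clean adjoint systems $K_S + (\cdots)$ and $K_C + (\cdots)$ with the degree bound $\epsilon(m)$, rather than losing positivity to the $\roundup{\cdot}$ and $\rounddown{\cdot}$ corrections. Concretely, one must verify at each stage that the residual $\bQ$-divisor after subtracting $S$ (resp.\ $C$) is nef and big so that Kawamata--Viehweg vanishing applies and the restriction sequences are exact; the numbers $\mu_0$ and $\zeta$ were defined precisely to measure how much of $\pi^*(L)$ is consumed by $S$ and by $C$, so the quantity $m - \mu_0 - m_1$ is exactly what survives to give positivity on $C$, and checking $\epsilon(m) = (m-\mu_0-m_1)\zeta > 2$ is what makes the final curve-level birationality go through.
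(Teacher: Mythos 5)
Your proposal follows essentially the same route as the paper: reduce along the flag $Y\supset S\supset C$ via the birationality principle and Assumption \ref{cy b asum}, apply Kawamata--Viehweg vanishing twice to get surjective restriction maps onto adjoint systems on $S$ and then on $C$, and conclude from the very ampleness of $K_C+D$ with $\deg D>2$, since $\deg \mathcal{D}_{m,n}=\epsilon(m,n)$ and $\roundup{\epsilon(m,n)}\geq 3$. The one device you flag but do not actually supply --- making the residual after peeling off $S$ genuinely nef and big rather than ``nef and big up to exceptional correction'' --- is handled in the paper by approximating the infimum $\mu_0$ by a rational $\mu_n$ with $\mu_n\pi^*(L_0)\sim_{\bQ}S+E_n$ for an effective $E_n$, passing to the sub-linear system $|K_Y+\roundup{\pi^*(mL+T)-E_n}|\preceq\Lambda_m$ so that $\pi^*(mL+T)-E_n-S\equiv(m-\mu_n)\pi^*(L)$ is nef and big, and re-modifying the resolution so that $E_n$ has simple normal crossing support.
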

\begin{proof} By Lemma \ref{cy b reduction}, we only need to prove the birationality of ${\Phi_{\Lambda_m}}$. Since Assumption \ref{cy b asum}(1) is satisfied, the usual birationality principle
reduces the birationality of ${\Phi_{\Lambda_m}}$ to that of
${\Phi_{\Lambda_m}}|_S$ for a general irreducible element $S$ of $|M_{m_0}|$.
Similarly, due to Assumption \ref{cy b asum}(2), we only need to prove
the birationality of ${\Phi_{\Lambda_m}}|_C$ for a general
irreducible element $C$ of $|G|$. Now we show how to restrict the linear system $\Lambda_m$ to $C$.


Now assume $\epsilon(m)>0$. 
 We can find a sufficiently large integer $n$ so that there exists a number $\mu_n\in \bQ^+$ with $0\leq \mu_n-\mu_0\leq \frac{1}{n}$, $\roundup{\epsilon(m,n)}=\roundup{\epsilon(m)}$ 
where $\epsilon(m,n):=(m-\mu_n-m_1)\zeta$ 
and 
$$\mu_n\pi^*(L_0)\sim_{\bQ} S+E_{n}$$ for an effective $\bQ$-divisor $E_{n}$. In particular, $\epsilon(m,n)>0$, and $\epsilon(m,n)>2$ if $\epsilon(m)>2$. Re-modify the resolution $\pi$ in Subsection \ref{cy b setting} so that $E_n$ has simple normal crossing support. 

For the given integer $m>0$, we have
\begin{align}\label{cy b subsys1}
|K_Y+\roundup{\pi^*(mL+T)-E_n}|\preceq
|K_Y+\roundup{\pi^*(mL+T)}|.
\end{align}
Since
$\epsilon(m, n)>0$, the $\bQ$-divisor
$$\pi^*(mL+T)-E_n-S\equiv (m-\mu_n)\pi^*(L)$$
is nef and big and thus $$H^1(Y,
K_Y+\roundup{\pi^*(mL+T)-E_n}-S)=0$$ by
Kawamata--Viehweg vanishing theorem. Hence we have
surjective map
\begin{align}\label{cy b surj1}
H^0(Y,K_Y+\roundup{\pi^*(mL+T)-E_n})\longrightarrow
H^0(S, K_S+L_{m,n}) 
\end{align} 
where
\begin{align}\label{cy b subsys2}
L_{m,n}:=(\roundup{\pi^*(mL+T)-E_n}-S)|_S\geq
\roundup{\mathcal{L}_{m,n}}
\end{align}
and ${\mathcal L}_{m,n}:=(\pi^*(mL+T)-E_n-S)|_S.$ Moreover, we have
$$m_1\pi^*(L_0)|_S\equiv C+H$$
for an effective $\bQ$-divisor $H$ on $S$ by the setting. Thus the $\bQ$-divisor
$${\mathcal
L}_{m,n}-H-C\equiv (m-\mu_n-m_1) \pi^*(L)|_S$$
is nef and big by $\epsilon(m,n)>0$. By Kawamata--Viehweg vanishing theorem again, $$H^1(S,
K_S+\roundup{{\mathcal
L}_{m,n}-H}-C)=0.$$ Therefore, we have surjective map
\begin{align}\label{cy b surj2}
H^0(S, K_S+\roundup{{\mathcal
L}_{m,n}-H})\longrightarrow
H^0(C, K_C+D_{m,n})
\end{align} 
where
\begin{align}\label{cy b subsys3}
D_{m,n}:=\roundup{{\mathcal
L}_{m,n}-H-C}|_C\geq
\roundup{\mathcal{D}_{m,n}}
\end{align} 
and
$\mathcal{D}_{m,n}:=({\mathcal
L}_{m,n}-H-C)|_C$ with
$\deg{\mathcal{D}_{m,n}}={\epsilon(m,n)}$.

Now by inequalities (\ref{cy b subsys1}), (\ref{cy b subsys2}), (\ref{cy b subsys3}), and surjective maps (\ref{cy b surj1}), (\ref{cy b surj2}), to prove
the birationality of ${\Phi_{\Lambda_m}}|_C$, it is
sufficient to prove that $|K_C+\roundup{\mathcal{D}_{m,n}}|$ gives a
birational map. Clearly this is the case whenever $\epsilon(m)>2$,
which in fact implies $\deg(\roundup{\mathcal{D}_{m,n}})\geq \roundup{\epsilon(m,n)}\geq 3$ and $K_C+\roundup{\mathcal{D}_{m,n}}$ is very ample. We complete the proof. 
\end{proof}

\begin{cor}\label{cy m00}  Keep the same notation as above. For any integer $m>0$, set
$$\epsilon(m,0):=(m-\frac{m_0}{\iota_{m_0}}-m_1)\zeta.$$ If  $\epsilon(m,0)>0$, then
$$\Lambda_m|_S\succeq |K_S+L_m|$$
where  $L_m:=(\roundup{\pi^*(mL+T)-\frac{1}{\iota_{m_0}}F_{m_0}}-S)|_S$.
\end{cor}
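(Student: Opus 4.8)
The plan is to re-run the first restriction step in the proof of Theorem \ref{cy b mb}, but with the concrete effective divisor $\frac{1}{\iota_{m_0}}F_{m_0}$ playing the role of the auxiliary divisor $E_n$ there. The point is that the corollary only asserts a statement after restricting to $S$ (not after the second restriction to $C$), so we do not need the $\mu_n$-approximation of the infimum $\mu_0$: we may work directly with the explicit value $\frac{m_0}{\iota_{m_0}}\geq \mu_0$, which is exactly what makes the divisor $L_m$ explicit. Recall from the setup that $m_0\pi^*(L_0)=\iota_{m_0}S+F_{m_0}$, so that $\frac{1}{\iota_{m_0}}F_{m_0}$ is an effective $\bQ$-divisor and $\frac{m_0}{\iota_{m_0}}\pi^*(L_0)-S\sim_{\bQ}\frac{1}{\iota_{m_0}}F_{m_0}$.

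The first step is to observe that, since $\frac{1}{\iota_{m_0}}F_{m_0}$ is effective, one has $\roundup{\pi^*(mL+T)-\frac{1}{\iota_{m_0}}F_{m_0}}\leq \roundup{\pi^*(mL+T)}$, and therefore
$$|K_Y+\roundup{\pi^*(mL+T)-\tfrac{1}{\iota_{m_0}}F_{m_0}}|\preceq \Lambda_m.$$
Next I would identify the relevant nef and big divisor. Using $T\equiv 0$, $L_0\equiv L$, and the relation above,
$$\pi^*(mL+T)-\tfrac{1}{\iota_{m_0}}F_{m_0}-S\equiv \Big(m-\tfrac{m_0}{\iota_{m_0}}\Big)\pi^*(L).$$
The hypothesis $\epsilon(m,0)=(m-\frac{m_0}{\iota_{m_0}}-m_1)\zeta>0$ together with $\zeta>0$ and $m_1\geq 0$ forces $m-\frac{m_0}{\iota_{m_0}}>0$, so this $\bQ$-divisor is nef and big ($L$ being nef and big and $\pi^*$ preserving both). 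After modifying $\pi$ so the fractional parts have simple normal crossing support, Kawamata--Viehweg vanishing yields
$$H^1\Big(Y,\ K_Y+\roundup{\pi^*(mL+T)-\tfrac{1}{\iota_{m_0}}F_{m_0}}-S\Big)=0.$$

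From the exact sequence twisting by $\OO_Y(-S)$ and adjunction $(K_Y+S)|_S=K_S$, the vanishing above gives a surjection
$$H^0\Big(Y,\ K_Y+\roundup{\pi^*(mL+T)-\tfrac{1}{\iota_{m_0}}F_{m_0}}\Big)\longrightarrow H^0(S,\ K_S+L_m),$$
where $L_m=(\roundup{\pi^*(mL+T)-\frac{1}{\iota_{m_0}}F_{m_0}}-S)|_S$; hence the restriction of $|K_Y+\roundup{\pi^*(mL+T)-\frac{1}{\iota_{m_0}}F_{m_0}}|$ to $S$ is the complete system $|K_S+L_m|$. Finally, writing the inclusion from the first step as $\Lambda_m\supseteq |K_Y+\roundup{\pi^*(mL+T)-\frac{1}{\iota_{m_0}}F_{m_0}}|+F''$ with $F''=\roundup{\pi^*(mL+T)}-\roundup{\pi^*(mL+T)-\frac{1}{\iota_{m_0}}F_{m_0}}\geq 0$ and restricting to the general (hence $F''$-avoiding) surface $S$ gives $\Lambda_m|_S\supseteq |K_S+L_m|+F''|_S$, i.e. $\Lambda_m|_S\succeq |K_S+L_m|$. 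The only point requiring care — and the place I would expect the subtleties to sit — is guaranteeing the applicability of Kawamata--Viehweg vanishing: namely arranging the simple normal crossing condition by a further modification of $\pi$ (harmless, since the conclusion is phrased in birational terms) and verifying the strict positivity $m>\frac{m_0}{\iota_{m_0}}$, which is precisely what $\epsilon(m,0)>0$ supplies; everything else is bookkeeping with round-ups and the definition of $\preceq$.
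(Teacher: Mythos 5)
Your proposal is correct and follows essentially the same route as the paper, which likewise proves the corollary by re-running the first restriction step of Theorem \ref{cy b mb} with the explicit effective divisor $\frac{1}{\iota_{m_0}}F_{m_0}$ in place of the auxiliary $E_n$, noting that $\epsilon(m,0)>0$ supplies the positivity needed for Kawamata--Viehweg vanishing and the surjection onto $H^0(S, K_S+L_m)$. The only cosmetic quibble is that the general $S$ need not literally avoid the effective difference $F''$; it suffices that $S$ is not a component of it, so that $F''|_S$ is effective and the relation $\succeq$ holds.
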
 
\begin{proof} Recall that 
$$m_0\pi^*(L_0)=\iota_{m_0}S+F_{m_0}.$$
{}First of all, 
$$|K_Y+\roundup{\pi^*(mL+T)-\frac{1}{\iota_{m_0}}F_{m_0}}|\preceq
|K_Y+\roundup{\pi^*(mL+T)}|.
$$
In fact, as long as  $\epsilon(m,0)>0$, the proof of Theorem \ref{cy b mb} is valid except for the last paragraph. In explicit, sujective map (\ref{cy b surj1}) reads the following surjective map
$$H^0(Y,K_Y+\roundup{\pi^*(mL+T)-\frac{1}{\iota_{m_0}}F_{m_0}})\longrightarrow
H^0(S, K_S+L_m) 
$$
where
$$L_m=(\roundup{\pi^*(mL+T)-\frac{1}{\iota_{m_0}}F_{m_0}}-S)|_S.
$$
Hence we have proved the statement. 
\end{proof}

\subsection{Criterion}\label{cy b section birationality}

In order to apply Theorem \ref{cy b mb}, we need to verify Assumption \ref{cy b asum} and $\epsilon(m)>2$ in advance, for which one of the crucial steps is to estimate the lower bound of $\zeta$.

\begin{lem}\label{lemma zeta}
\begin{itemize}
\item[(1)] If $\epsilon(m)>1$, then $\zeta\geq \frac{2g(C)-2+\roundup{\epsilon(m)}}{m}$;

\item[(2)] Moreover, $$\zeta\geq \frac{2g(C)-1}{\mu_0+m_1+1};$$

\item[(3)] If $g(C)=1$, then  $\zeta\geq 1$;

\item[(4)] $i(X)\zeta\in \mathbb{Z}_{>0}$.
\end{itemize}
\end{lem}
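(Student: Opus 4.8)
The plan is to handle the four parts in the order (4), (1), (2), (3), since the integrality statement (4) is the input for (3), and the degree inequality (1) is the engine that drives (2).

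For (4) I would argue purely by the projection formula together with the fact (Kawamata \cite[Corollary 5.2]{K2}) that $i(X)L_0$ is Cartier. Writing $i(X)\zeta=(\pi^*(i(X)L_0)\cdot C)_Y=(i(X)L_0\cdot\pi_*C)_X$, the right-hand side is the intersection of a Cartier divisor with the integral curve $\pi_*C$, hence an integer; it is positive because $\zeta>0$. This is exactly the mechanism of Lemma \ref{cy L3}.

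The heart of the matter is (1). When $\epsilon(m)>1$ the construction of $n$ gives $\roundup{\epsilon(m,n)}=\roundup{\epsilon(m)}\geq 2$, and since the degree of a round-up dominates the round-up of the degree, $\deg(K_C+D_{m,n})\geq 2g(C)-2+\roundup{\epsilon(m)}\geq 2g(C)$; thus $|K_C+D_{m,n}|$ is base point free. The crucial observation is that the two surjections \eqref{cy b surj1} and \eqref{cy b surj2} in the proof of Theorem \ref{cy b mb}, combined with Lemma \ref{cy b Hn}, exhibit every section of $\OO_C(K_C+D_{m,n})$ as the restriction to $C$ of a section of $H^0(X,\OO_X(K_X+mL+T))$ pulled back by $\pi$. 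Hence the morphism $\Phi_{|K_C+D_{m,n}|}$ factors through $\pi|_C\colon C\to\pi_*C$, which is birational onto its image because $\zeta>0$. Therefore $\deg(K_C+D_{m,n})\leq\deg(\OO_X(K_X+mL+T)|_{\pi_*C})=(K_X+mL+T)\cdot\pi_*C$, and since $K_X\equiv 0$ and $T\equiv 0$ this intersection number is $m(L\cdot\pi_*C)=m\zeta$. Comparing the two bounds gives $2g(C)-2+\roundup{\epsilon(m)}\leq m\zeta$, which is (1). This is precisely where $K_X\equiv 0$ is used decisively: computed on $Y$ the naive bound carries an extra exceptional term $(K_Y\cdot C)=(E_\pi\cdot C)\geq 0$, and only after descending to $X$, where $K_X$ contributes nothing, does this term vanish.

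For (2) I would apply (1) to the smallest integer $m^{\ast}$ with $\epsilon(m^{\ast})>1$. As $\epsilon(m)=(m-\mu_0-m_1)\zeta$ increases with slope $\zeta$, one gets $m^{\ast}\leq\mu_0+m_1+\frac{1}{\zeta}+1$; then (1) yields $m^{\ast}\zeta\geq 2g(C)-2+2=2g(C)$, and inserting the upper bound for $m^{\ast}$ gives $(\mu_0+m_1+1)\zeta+1\geq 2g(C)$, which is the claimed estimate, the extra $+1$ in the denominator being the cost of rounding $m^{\ast}$ to an integer. For (3) the key is an adjunction computation on $S$: the general member $C$ of a base point free system is nef with $C^2\geq 0$, while $K_S=(K_Y+S)|_S=(E_\pi+S)|_S$ is a sum of an effective and a nef class, so $K_S\cdot C\geq 0$. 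When $g(C)=1$, adjunction $0=2g(C)-2=K_S\cdot C+C^2$ forces $K_S\cdot C=C^2=0$, hence $E_\pi\cdot C=0$. Being movable and meeting the effective exceptional divisor $E_\pi$ with multiplicity zero, a general $C$ is disjoint from $E_\pi$, so $\pi_*C$ avoids $\Sing X$; then $\OO_X(L)|_{\pi_*C}$ is an honest line bundle and $\zeta=\deg\OO_X(L)|_{\pi_*C}$ is a positive integer, giving $\zeta\geq 1$. This also explains why $g(C)=1$ is singled out: in higher genus $K_S\cdot C$ may be positive, $C$ may genuinely meet the exceptional locus over $\Sing X$, and $\zeta$ may then be a proper fraction, controlled only by (4).

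The main obstacle I expect is the bookkeeping in (1): one must check that the composite surjection really identifies the \emph{complete} system $|K_C+D_{m,n}|$ with sections descending from $X$, so that the factorization through $\pi_*C$ is legitimate, and one must compare $\deg(K_C+D_{m,n})$ with the intersection number $(K_X+mL+T)\cdot\pi_*C$ while allowing $\pi_*C$ to be singular. Everything else reduces to intersection theory on the surface $S$ and elementary estimates.
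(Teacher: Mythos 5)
Your overall strategy coincides with the paper's: part (4) is the same one-line appeal to the Cartierness of $i(X)L$; part (3) is the same adjunction computation forcing $(E_{\pi}\cdot C)=0$ (the paper then writes $\pi^*L=\pi_*^{-1}L+\sum b_iE_i$ and kills the exceptional terms, while you argue that a general $C$ is disjoint from $\operatorname{Exc}(\pi)$ so that everything happens over the smooth locus --- equivalent); and your part (2) is a mild streamlining of the paper's, replacing the case division on whether $m'>\mu_0+m_1+1$ by the single bound $m^{*}\leq \mu_0+m_1+\frac{1}{\zeta}+1$, which is clean and correct. The one point of substance is part (1). The paper proves $\deg(K_C+\roundup{\mathcal{D}_{m,n}})\leq m\zeta$ entirely on the smooth model: it introduces the movable parts $\mathcal{M}_m$ of $|\rounddown{\pi^*(K_X+mL+T)}|$ and $\mathcal{N}_m$ of $|K_S+\roundup{\mathcal{L}_{m,n}-H}|$, invokes \cite[Lemma 2.7]{Camb} to get $\pi^*(K_X+mL+T)|_S\geq \mathcal{M}_m|_S\geq \mathcal{N}_m$, uses base-point-freeness to get $\mathcal{N}_m|_C\geq K_C+\roundup{\mathcal{D}_{m,n}}$, and then intersects with $C$. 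You instead factor $\Phi_{|K_C+D_{m,n}|}$ through $\pi|_C\colon C\to\pi_*C$ and bound the degree by $\deg(\OO_X(K_X+mL+T)|_{\pi_*C})=(K_X+mL+T)\cdot\pi_*C$. The intuition is right, but that identification is not legitimate as stated: unlike in part (3), here $\pi_*C$ may well pass through non-Gorenstein points of $X$, where $\OO_X(K_X+mL+T)$ is only a reflexive sheaf; the degree of its restriction to $\pi_*C$ (modulo torsion) is an integer, whereas $(K_X+mL+T)\cdot\pi_*C$ lies in $\frac{1}{i(X)}\ZZ$ and need not be integral, so the two quantities can genuinely differ. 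The inequality you actually need, $\deg(K_C+\roundup{\mathcal{D}_{m,n}})\leq m\zeta$, is recovered by pulling a general member of $|K_X+mL+T|$ back to $Y$ and checking that the exceptional excess lands in the fixed part of the restricted system --- which is exactly what the paper's comparison of movable parts via \cite[Lemma 2.7]{Camb} packages. So: same skeleton, with (2) slightly improved, and one step of (1) that should be rerouted through the smooth model rather than through degrees of restricted sheaves on $X$.
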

\begin{proof}
(1). Recall that since $K_X\equiv 0$, $K_Y$ is pseudo-effective and hence $g(C)\geq 1$. In the proof of Theorem \ref{cy b mb}, if $\epsilon(m)>1$ then
$|K_C+\roundup{\mathcal{D}_{m,n}}|$ is base point free with
$$\deg(K_C+\roundup{\mathcal{D}_{m,n}})\geq 2g(C)-2+\roundup{\epsilon(m,n)}= 2g(C)-2+\roundup{\epsilon(m)}.$$
Denote by $\mathcal{N}_m$ the movable part of
$|K_S+\roundup{\mathcal{L}_{m,n}-H}|$. Note that 
\begin{align*}
{}& H^0(\OO_Y(\rounddown{\pi^*(K_X+mL+T)}))\\
\cong{}& H^0(\OO_X(K_X+mL+T))\\
\cong {}&H^0(\OO_Y(K_Y+\roundup{\pi^*(mL+T)})).
\end{align*}
Denote by $\mathcal{M}_m$ the movable part of $|\rounddown{\pi^*(K_X+mL+T)}|$.
Noting the relations
(\ref{cy b subsys1}-\ref{cy b subsys3}) while applying \cite[Lemma 2.7]{Camb}, we
get
$$\pi^*(K_X+mL+T)|_S\geq {\mathcal{M}_{m}}|_S\geq \mathcal{N}_m$$
and ${\mathcal{N}_m}|_C\geq K_C+\roundup{\mathcal{D}_{m,n}}$ since
the latter one is base point free.
So we have 
$$m\zeta=\pi^*(K_X+mL+T)|_S\cdot C\geq \mathcal{N}_m\cdot C\geq \deg(K_C+\roundup{\mathcal{D}_{m,n}}).$$
Hence 
$$m\zeta\geq
2g(C)-2+\roundup{\epsilon(m)}.$$

(2). Take $m'=\min\{m\mid\epsilon(m)>1\}$, then (1) implies
$\zeta\geq \frac{2g(C)}{m'}$. We may assume that $m'> \mu_0+m_1+1$ otherwise $\zeta\geq \frac{2g(C)}{\mu_0+m_1+1}$. Hence 
\begin{align*}
\epsilon(m'-1){}&=(m'-1-\mu_0-m_1)\zeta\\
{}&\geq (m'-1-\mu_0-m_1)\frac{2g(C)}{m'}.
\end{align*}
By the minimality of $m'$, it follows that $\epsilon(m'-1)\leq 1$. Hence $m'\leq \frac{2g(C)}{2g(C)-1}(\mu_0+m_1+1)$. Then $$\zeta\geq \frac{2g(C)}{m'}\geq \frac{2g(C)-1}{\mu_0+m_1+1}.$$

(3). Recall that 
$$
K_Y= \pi^* K_X+E_{\pi}\equiv E_{\pi},
$$
where $E_{\pi}$ is an effective $\bQ$-Cartier $\bQ$-divisor whose support contains all $\pi$-exceptional divisors since $X$ has at worst terminal singularities.
If $g(C)=1$, then
\begin{align*}
0{}&=((K_S+C)\cdot C)_S\\
{}&=(K_Y\cdot C)_Y+(S\cdot C)_Y+(C^2)_S\\
{}&=(E_{\pi}\cdot C)_Y+(S\cdot C)_Y+(C^2)_S.
\end{align*}
Since $C$ is free on a free surface $S$,  $(C^2)_S$, $(S\cdot C)_Y$, and $(E_{\pi}\cdot C)_Y$ are non-negative. Hence $(E_{\pi}\cdot C)_Y=0$, which implies that $(E\cdot C)_Y=0$ for any $\pi$-exceptional divisor $E$ on $Y$ since $X$ has at worst terminal singularities.
Hence $\zeta=(\pi^*L\cdot C)_Y$ is an integer. On the other hand, $\zeta>0$. Hence $\zeta\geq 1$.

(4). It follows from the fact that $i(X)L$ is Cartier. 
\end{proof}

\begin{prop}\label{cy b zeta}Let $m>0$ be an integer. Keep the same notation as in Subsection \ref{cy b keythm}. 
Then
$$
\zeta\geq\Big\lceil{i(X)\min\Big\{1,\frac{3}{\mu_0+m_1+1} \Big\}}\Big\rceil/i(X).
$$
\end{prop}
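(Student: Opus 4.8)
The plan is to deduce the proposition directly from Lemma \ref{lemma zeta}, combining the genus dichotomy in parts (2) and (3) with the integrality statement in part (4). The first step is to establish the clean real-valued lower bound
$$\zeta\geq \min\Big\{1,\frac{3}{\mu_0+m_1+1}\Big\},$$
valid irrespective of the value of $g(C)$. Recall that $g(C)\geq 1$ always holds, since $K_Y$ is pseudo-effective. If $g(C)=1$, then part (3) gives $\zeta\geq 1$, which is at least the right-hand minimum. If instead $g(C)\geq 2$, then $2g(C)-1\geq 3$, so part (2) yields $\zeta\geq \frac{2g(C)-1}{\mu_0+m_1+1}\geq \frac{3}{\mu_0+m_1+1}$, again at least the minimum. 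Hence the displayed estimate holds in all cases.

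The second step is to upgrade this continuous bound to the asserted one using the integrality of $i(X)\zeta$. Multiplying through by the positive integer $i(X)$ gives $i(X)\zeta\geq i(X)\min\{1,\frac{3}{\mu_0+m_1+1}\}$. By part (4) the quantity $i(X)\zeta$ is a positive integer, and any integer bounded below by a real number $x$ is also bounded below by $\lceil x\rceil$; therefore $i(X)\zeta\geq \lceil i(X)\min\{1,\frac{3}{\mu_0+m_1+1}\}\rceil$. Dividing back by $i(X)$ reproduces exactly the inequality in the statement.

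The argument is essentially a bookkeeping combination of the preceding lemma, so I do not expect a serious obstacle. The only point requiring some care is the genus split in the first step: one must check that the crude bound $\frac{3}{\mu_0+m_1+1}$, rather than the sharper $\frac{2g(C)-1}{\mu_0+m_1+1}$, already suffices. This works because the minimum with $1$ guarantees $i(X)\min\{1,\frac{3}{\mu_0+m_1+1}\}\leq i(X)$, so its ceiling never exceeds $i(X)$, and in the genus-one case the bound $\zeta\geq 1$ alone dominates. I would be careful not to discard the integrality hypothesis prematurely, as it is precisely what converts the continuous estimate into the ceiling expression appearing in the conclusion.
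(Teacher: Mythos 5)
Your proposal is correct and follows exactly the paper's own argument: split on $g(C)=1$ versus $g(C)\geq 2$ to get $\zeta\geq\min\{1,\frac{3}{\mu_0+m_1+1}\}$ from Lemma \ref{lemma zeta}(2)(3), then use the integrality of $i(X)\zeta$ from Lemma \ref{lemma zeta}(4) to pass to the ceiling. There is nothing to add.
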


\begin{proof}
If $g(C)=1$, by Lemma \ref{lemma zeta}(3), $\zeta\geq 1$; if $g(C)\geq 2$, by Lemma \ref{lemma zeta}(2), $\zeta\geq \frac{3}{\mu_0+m_1+1}$.
Then by Lemma \ref{lemma zeta}(4),
$$
i(X)\zeta\geq\Big\lceil{i(X)\min\Big\{1,\frac{3}{\mu_0+m_1+1} \Big\}}\Big\rceil.
$$
We complete the proof. 
\end{proof}

Define 
$$
\rho_0:=\min\{k\in \mathbb{Z}_{>0}\mid h^{0}(mL+T')>0 \text{ for all } m\geq k \text{ and for all } T'\equiv 0 \},
$$
where $T'$ is assumed to be a  Weil divisor.

To verify Assumption \ref{cy b asum}(1), we have the following proposition.

\begin{prop}\label{cy b a1} Let $(X, L, T)$ be a polarized triple. Keep the same notation as Subsection \ref{cy b keythm}. Then
Assumption \ref{cy b asum}(1) is satisfied for all $m\geq m_0+\rho_0$.
\end{prop}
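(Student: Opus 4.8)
The goal is to show that the linear system $\Lambda_m=|K_Y+\roundup{\pi^*(mL+T)}|$ separates two general irreducible elements $S'$, $S''$ of $|M_{m_0}|$ whenever $m\geq m_0+\rho_0$. The natural strategy is to produce, inside $\Lambda_m$, a subsystem which contains $S'$ but not $S''$ (and symmetrically), thereby witnessing that $\Phi_{\Lambda_m}$ maps $S'$ and $S''$ to distinct points of the image. The plan is to use the freeness of $|M_{m_0}|$ together with a vanishing argument to peel off one general member and reduce the separation of the two surfaces to a nonvanishing statement on the remaining divisor.

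\textbf{Key steps.} First I would recall that $m_0\pi^*(L_0)=\iota_{m_0}S+F_{m_0}$, so that for a general $S$ we may write $\pi^*(m_0L_0)\geq S$, and more importantly that $|M_{m_0}|$ is base point free, so distinct general members $S'$, $S''$ are disjoint (or at least meet only along a fixed locus). Second, I would split off one copy of $S$ from the coefficient: since $m\geq m_0+\rho_0$, the residual divisor $\pi^*((m-m_0)L+T)$ is of the shape $\pi^*(m'L+T)$ with $m'\geq \rho_0$, so by the definition of $\rho_0$ we have $h^0\big((m-m_0)L+T'\big)>0$ for every $T'\equiv 0$; this is precisely what guarantees an effective section survives after the restriction. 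Third, I would run the standard vanishing-and-restriction argument: writing $\pi^*(mL+T)-S\equiv (\text{nef and big})+ (\text{rest})$ and invoking Kawamata--Viehweg vanishing, the restriction map
$$
H^0\big(Y,\ K_Y+\roundup{\pi^*(mL+T)}\big)\longrightarrow H^0\big(S,\ (K_Y+\roundup{\pi^*(mL+T)})|_S\big)
$$
is surjective. Surjectivity means a section vanishing on $S''$ but not identically on $S'$ can be lifted, which is exactly Assumption \ref{cy b asum}(1). The nonvanishing on the fiber $S'$ is supplied by the $\rho_0$ bound applied to $K_X+(m-m_0)L+T$ restricted appropriately, after using Lemma \ref{cy b reduction} to pass between $X$ and $Y$.

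\textbf{Main obstacle.} The delicate point is the bookkeeping of the round-ups $\roundup{\cdot}$ and the effective exceptional contributions $E_\pi$ coming from the resolution, since $L$ is only Weil and $\pi^*(mL+T)$ is a genuine $\bQ$-divisor; one must check that after splitting off $S$ the integral part $\roundup{\pi^*((m-m_0)L+T)}$ still carries the section guaranteed by $\rho_0$, rather than losing it to a fractional correction. I expect this to be the crux: one needs the inequality $\roundup{\pi^*((m-m_0)L+T)}\geq \rounddown{\pi^*((m-m_0)L+T')}$ for a suitable $T'\equiv 0$ so that Lemma \ref{cy b Hn} identifies the relevant $H^0$ on $Y$ with $H^0(X,(m-m_0)L+T')$, which is nonzero by the choice $m-m_0\geq\rho_0$. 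Once the nonvanishing is correctly matched to $\rho_0$ through the round-up, the separation of $S'$ and $S''$ follows formally from their base-point-freeness and the surjectivity above.
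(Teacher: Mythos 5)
Your first half matches the paper: the key computation is
\begin{align*}
K_Y+\roundup{\pi^*(mL+T)}\geq (K_Y+\roundup{\pi^*(mL+T-m_0L_0)})+M_{m_0},
\end{align*}
and the summand $K_Y+\roundup{\pi^*(mL+T-m_0L_0)}$ is effective because, by Lemma \ref{cy b Hn}, its $h^0$ equals $h^0(K_X+mL+T-m_0L_0)$, which is positive for $m-m_0\geq \rho_0$ since $K_X+T+m_0(L-L_0)\equiv 0$. Your worry about round-ups is exactly resolved this way, so that part of your plan is sound and is what the paper does.

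The gap is in how you pass from $\Lambda_m\succeq M_{m_0}$ to the actual separation of $S'$ and $S''$. Surjectivity of the restriction $H^0(Y,K_Y+\roundup{\pi^*(mL+T)})\to H^0(S,\cdot)$ for a \emph{single} general $S$ (even granting the Kawamata--Viehweg vanishing you invoke) does not produce a member of $\Lambda_m$ containing $S''$ but not $S'$; that is a statement about two surfaces simultaneously, not about restricting to one. Also, your claim that distinct general members of $|M_{m_0}|$ are disjoint or meet in a fixed locus fails in Case $(f_{\rm np})$, where two general members meet along a moving curve. The paper instead splits into cases: in Case $(f_{\rm np})$ it cites Tankeev's lemma (\cite[Lemma 2]{T}), which says precisely that a system dominating a base-point-free system of image dimension $\geq 2$ distinguishes its general members; in Case $(f_{\rm p})$ it uses that $|M_{m_0}|$ is a \emph{rational} pencil (this is where the hypothesis $q(X)=0$ in the definition of a polarized triple enters --- for an irrational base the pencil need not separate fibers), so the pencil itself already separates fibers and $\Lambda_m\succeq M_{m_0}$ suffices. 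Your proposal contains neither ingredient, and without them the conclusion does not follow from what you have established.
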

\begin{proof}We have
\begin{align*} 
{}& K_Y+\roundup{\pi^*(mL+T)}\\
\geq {}& K_Y+\roundup{\pi^*(mL+T-m_0L_0)+M_{m_0}}\\
={}& (K_Y+\roundup{\pi^*(mL+T-m_0L_0)})+M_{m_0}\\
\geq{}& M_{m_0}.
\end{align*} 
The last inequality is due to
\begin{align*}
{}&h^{0}(K_Y+\roundup{\pi^*(mL+T-m_0L_0)})\\
={}&h^{0}(K_X+mL+T-m_0L_0)>0
\end{align*}
when $m\geq m_0+\rho_0$ by Lemma \ref{cy b Hn} and the definition of $\rho_0$.

When $f:Y\rightarrow \Gamma$ is of type $(f_{\text{np}})$,
\cite[Lemma 2]{T} implies that $\Lambda_m$ can distinguish different
general irreducible elements of $|M_{m_0}|$. When $f$ is of type
$(f_{\text{p}})$, since the rational
pencil $|M_{m_0}|$ (recall that $q(X)=0$) can already separate different fibers of $f$,
$\Lambda_m$ can automatically distinguish different general irreducible
elements of $|M_{m_0}|$. 
\end{proof}

It is slightly more complicated to verify Assumption \ref{cy b asum}(2).

\begin{lem}[{cf. \cite[Lemma 3.7]{C}}]\label{cy b S2} Let $R$ be a smooth projective surface with a base point free linear system $|G|$. Let $Q$ be
an arbitrary $\bQ$-divisor on $R$. 
Denote by $C$ a general irreducible element of $|G|$. Then the linear system
$|K_R+\roundup{Q}+G|$ can distinguish different general irreducible
elements of $|G|$ under one of the following conditions:
\begin{itemize}
\item[(1)] $|G|$ is not
composed with an irrational pencil of curves and $K_R+\roundup{Q}$ is effective;

\item[(2)]$|G|$ is composed with an
irrational pencil of curves, $g(C)> 0$, and $Q$ is nef and big;
\end{itemize}
\end{lem}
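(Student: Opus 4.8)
The plan is to reduce, in both cases, to the following separation statement: for two general distinct irreducible elements $C'$, $C''$ of $|G|$ it suffices to exhibit a member of $\Lambda:=|K_R+\roundup{Q}+G|$ containing $C'$ but not $C''$. Indeed, such a member corresponds to a hyperplane containing $\Phi_\Lambda(C')$ but not $\Phi_\Lambda(C'')$, so $\Phi_\Lambda(C')\neq \Phi_\Lambda(C'')$ and the general irreducible elements are distinguished. I would then split according to the structure of $\Phi_{|G|}$, treating the two hypotheses by genuinely different mechanisms.

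For (1), I would first use that $K_R+\roundup{Q}$ is effective to fix $E\in|K_R+\roundup{Q}|$ with $E\geq 0$; then every effective divisor $E+D_0$ with $D_0\sim G$ lies in the complete system $\Lambda$, so it is enough to find such a $D_0$ passing through $C'$ but, for general $C''$, avoiding $C''$. If $|G|$ is not composed with a pencil, a general member of $|G|$ is irreducible by Bertini and I simply take $D_0=C'\in|G|$. If $|G|$ is composed with a pencil, it is by hypothesis rational, so there is a fibration $f\colon R\to \bP^1$ with $G\sim f^*\mathfrak d$ and $\deg\mathfrak d=a\geq 1$; since $|\mathcal{O}_{\bP^1}(a)|$ separates points I choose $\mathfrak a\in|\mathcal{O}_{\bP^1}(a)|$ through $p'=f(C')$ but not $p''=f(C'')$ and set $D_0=f^*\mathfrak a\geq C'$. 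In both subcases $E+D_0\in\Lambda$ contains $C'$ and, for general $C''$, not $C''$.

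For (2), where $f\colon R\to\Gamma$ is an irrational pencil with $g(\Gamma)>0$, the map $\Phi_{|G|}$ need not separate fibers, so I would instead exploit the bigness of $Q$. Here $G\equiv aC$ with $a\geq 2$, and two general fibers $C'$, $C''$ are disjoint. I consider the short exact sequence obtained by restricting $\mathcal{O}_R(K_R+\roundup{Q}+G)$ to $C'\sqcup C''$, whose kernel is $\mathcal{O}_R(K_R+\roundup{Q}+G-C'-C'')$. Writing $K_R+\roundup{Q}+G-C'-C''=K_R+\roundup{Q+(G-C'-C'')}$ and noting $Q+(G-C'-C'')\equiv Q+(a-2)C$ is nef and big, Kawamata--Viehweg vanishing kills the $H^1$ of the kernel and yields a surjection $H^0(R,\Lambda)\twoheadrightarrow H^0(C',(K_R+\roundup{Q}+G)|_{C'})\oplus H^0(C'',(K_R+\roundup{Q}+G)|_{C''})$. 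By adjunction, using $(C')^2=0$ and $G|_{C'}=0$ for general $C'$, the restriction is $K_{C'}+\roundup{Q}|_{C'}$, of degree $2g(C')-2+(\roundup{Q}\cdot C')$. Since $Q$ is big, $\roundup{Q}\cdot C'\geq Q\cdot C'>0$, and as $g(C')=g(C)>0$ one gets $h^0(C',K_{C'}+\roundup{Q}|_{C'})\geq g(C')>0$; likewise for $C''$. Surjectivity then lets me choose a global section nonzero on $C'$ and zero on $C''$, i.e.\ a member of $\Lambda$ separating them.

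The main obstacle, I expect, is the composed-with-pencil geometry. In case (1) the naive hope that $\Phi_{|G|}$ itself separates fibers can fail when the finite part of the Stein factorization has degree $>1$; the remedy is to pass to the \emph{complete} system $\Lambda$ and construct the separating divisor by hand as a pullback from $\bP^1$, which is exactly where the effectivity of $K_R+\roundup{Q}$ is needed. In case (2) the crux is forcing the restricted system on a general fiber to be nonempty, and this is precisely where $g(C)>0$ together with the bigness of $Q$ enter, guaranteeing $\deg(K_{C'}+\roundup{Q}|_{C'})>2g(C')-2$ and hence $h^0>0$; without $g(C)>0$ the genus-zero fibers would require a stronger positivity hypothesis on $Q$.
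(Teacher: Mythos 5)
Your proof is correct and follows essentially the same route as the paper: part (1) is the Tankeev-type separation argument (which the paper handles by citing Tankeev's Lemma 2 together with the observation that a rational pencil separates its own fibers, and which you make explicit by producing the divisor $E+D_0$), and part (2) is the same Kawamata--Viehweg vanishing plus Riemann--Roch computation on the disjoint general fibers $C'$, $C''$, using $G\equiv aC$ with $a\geq 2$ and $Q\cdot C>0$ to get $h^0(C',K_{C'}+D')>0$ from $g(C)>0$.
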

\begin{proof} The statement corresponding to (1) follows from \cite[Lemma 2]{T} and
the fact that a rational pencil can automatically distinguish its
different general irreducible elements.

For situation (2), we pick a general irreducible element
$C$ of $|G|$. Then, since $h^0(R, G)\geq 2$, $G\equiv sC$ for some
integer $s\geq 2$ and $C^2=0$. Denote by $C_1$ and $C_2$ two general 
irreducible elements of $|G|$ such that $C_1+C_2\leq |G|$. Then Kawamata--Viehweg vanishing theorem
gives the surjective map
$$
H^0(R,K_R+\roundup{Q}+G)\longrightarrow H^0(C,K_{C_1}+D_1)\oplus
H^0(C_2,K_{C_2}+D_2)
$$ 
where $D_i:=(\roundup{Q}+G-C_i)|_{C_i}$ with $\deg(D_i)\geq Q\cdot C_i>0$ for $i=1,2$.

If $g(C)> 0$, Riemann--Roch formula gives
$h^0(C_i, K_{C_i}+D_i)>0$ for $i=1,2$. Thus $|K_R+\roundup{Q}+G|$ can
distinguish $C_1$ and $C_2$. 
\end{proof}

\begin{prop}\label{cy b a2} Let $(X, L, T)$ be a polarized triple. Keep the same notation as in Subsection
\ref{cy b keythm}. 
Then Assumption \ref{cy b asum}(2) is satisfied for all  $m\geq m_0+m_1+\rho_0$.
\end{prop}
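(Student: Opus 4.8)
The plan is to mirror the proof of Proposition \ref{cy b a1}, but now to restrict to the general surface $S$ and invoke Lemma \ref{cy b S2} on $S$ for the base point free system $|G|=|{M_{m_1}}|_S|$. Since ``$\Lambda_m|_S$ distinguishes the general irreducible elements of $|G|$'' is exactly Assumption \ref{cy b asum}(2), and since distinguishing is preserved when one enlarges a linear system, it suffices to produce an integral divisor $D$ on $S$ with $\Lambda_m|_S\succeq |K_S+D+G|$ and to verify the hypotheses of Lemma \ref{cy b S2} with $R=S$ and $\roundup{Q}=D$.

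To build such a subsystem I would peel off both $S$ and $M_{m_1}$. Set $P_0:=\pi^*(mL+T)-\pi^*(m_0L_0)-\pi^*(m_1L_0)$, so that $P_0\equiv (m-m_0-m_1)\pi^*(L)$ is nef and big as soon as $m>m_0+m_1$. As $\pi^*(m_0L_0)\geq M_{m_0}\geq S$ and $\pi^*(m_1L_0)\geq M_{m_1}$, the two differences are effective, whence $\roundup{\pi^*(mL+T)}\geq S+M_{m_1}+\roundup{P_0}$ and therefore $\Lambda_m\succeq |K_Y+S+M_{m_1}+\roundup{P_0}|$. Since $M_{m_1}+\roundup{P_0}$ differs from the nef and big $\bQ$-divisor $M_{m_1}+P_0$ by a divisor with simple normal crossing fractional support, Kawamata--Viehweg vanishing gives $H^1(Y,K_Y+M_{m_1}+\roundup{P_0})=0$, so restriction to $S$ is surjective and yields $\Lambda_m|_S\succeq |K_S+G+\roundup{P_0}|_S|$, using $M_{m_1}|_S=G$ and adjunction $(K_Y+S)|_S=K_S$. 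This holds for all $m>m_0+m_1$.

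Now I apply Lemma \ref{cy b S2} on $S$. If $|G|$ is composed with an irrational pencil, take $Q:=P_0|_S$, which is nef and big, and recall that $g(C)\geq 1>0$ because $K_X\equiv 0$ forces $K_Y$ to be pseudo-effective; then Lemma \ref{cy b S2}(2) applies, and since $\roundup{P_0}|_S\geq \roundup{Q}$ we get that $\Lambda_m|_S\succeq |K_S+\roundup{Q}+G|$ distinguishes the general elements of $|G|$. If $|G|$ is not composed with an irrational pencil, I take $D=\roundup{P_0}|_S$ and must verify the remaining hypothesis of Lemma \ref{cy b S2}(1), namely that $K_S+\roundup{P_0}|_S$ is effective. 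This is the only step where $\rho_0$ enters, and it is the main obstacle.

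For the effectivity I would compare cohomology on $Y$. As $P_0$ is nef and big, Kawamata--Viehweg vanishing gives $H^1(Y,K_Y+\roundup{P_0})=0$, so the restriction sequence yields $h^0(S,K_S+\roundup{P_0}|_S)=h^0(Y,K_Y+S+\roundup{P_0})-h^0(Y,K_Y+\roundup{P_0})$. By Lemma \ref{cy b Hn} (exactly as in Lemma \ref{cy b reduction}), $h^0(Y,K_Y+\roundup{P_0})=h^0(X,K_X+mL+T-m_0L_0-m_1L_0)=h^0(X,(m-m_0-m_1)L+T')$ for a Weil divisor $T'\equiv 0$, which is positive once $m\geq m_0+m_1+\rho_0$ by the definition of $\rho_0$. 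It remains to see that adjoining the moving divisor $S$ strictly increases $h^0$. This follows from $h^0(\OO_Y(S))\geq 2$: indeed $S$ is either the whole base point free movable part $M_{m_0}$ (when $|m_0L_0|$ is not composed with a pencil) or a general fibre of the rational pencil $f\colon Y\to \Gamma\cong\bP^1$ (recall $q(X)=0$), so $|S|$ moves; hence the multiplication map $H^0(\OO_Y(S))\otimes H^0(Y,K_Y+\roundup{P_0})\to H^0(Y,K_Y+S+\roundup{P_0})$ has image of dimension at least $h^0(\OO_Y(S))+h^0(Y,K_Y+\roundup{P_0})-1>h^0(Y,K_Y+\roundup{P_0})$. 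Thus $K_S+\roundup{P_0}|_S$ is effective, Lemma \ref{cy b S2}(1) applies, and in either case $\Lambda_m|_S$ distinguishes the general irreducible elements of $|G|$ for all $m\geq m_0+m_1+\rho_0$, as claimed.
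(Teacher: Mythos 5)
Your argument is correct and follows the paper's strategy: restrict $\Lambda_m$ to $S$ via Kawamata--Viehweg vanishing, then settle the two alternatives of Lemma \ref{cy b S2} on $S$, with $\rho_0$ entering exactly once through Lemma \ref{cy b Hn} to supply effectivity. Two points of execution differ. First, you peel off $S$ and $M_{m_1}$ in a single vanishing step by subtracting the full pullbacks $\pi^*(m_0L_0)+\pi^*(m_1L_0)$, whereas the paper peels off $S$ through Corollary \ref{cy m00}, subtracting only $S+\frac{1}{\iota_{m_0}}F_{m_0}$ (sharper, since it needs only $\epsilon(m,0)>0$); for the range $m\geq m_0+m_1+\rho_0$ both produce the required subsystem. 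Second, and more substantively, for the effectivity hypothesis in Lemma \ref{cy b S2}(1) the paper keeps the divisor $K_Y+\roundup{\pi^*(mL+T-(m_0+m_1)L_0)}$ on $Y$, notes it is effective for $m-m_0-m_1\geq\rho_0$ by the definition of $\rho_0$, and restricts it to the general $S$; you instead work with the larger divisor $K_S+\roundup{P_0}|_S=(K_Y+S+\roundup{P_0})|_S$ directly on $S$, which forces the extra multiplication-map (Hopf-lemma) step using $h^0(\OO_Y(S))\geq 2$ to produce a section surviving restriction. That step is valid (and self-contained on $S$), but the paper's route avoids it entirely by checking effectivity upstairs before restricting, so you may wish to note that the detour is not needed.
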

\begin{proof}  Assuming $m\geq m_0+m_1+1$, 
we have $\epsilon(m,0)>0$,  and Corollary \ref{cy m00} implies that  
$${\Lambda_m}|_S\succeq |K_S+L_{m}|.$$ 
It suffices to prove that $|K_S+L_{m}|$ can distinguish different general irreducible elements of $|G|$.

For a suitable integer $m>0$, we have 
\begin{align*} 
{}&K_S+L_m\\
={}& K_Y|_S+\roundup{\pi^*(mL+T)-\frac{1}{\iota_{m_0}}F_{m_0}}|_S\\
\geq{}& (K_Y+\roundup{\pi^*(mL+T-(m_0+m_1)L_0)})|_S+{M_{m_1}}|_S.
\end{align*} 
Thus, if $|G|$ is not composed with an irrational pencil of curves,
$|K_S+L_m|$ can distinguish different irreducible elements provided that $$K_Y+\roundup{\pi^*(mL+T-(m_0+m_1)L_0)}$$ is effective, which holds for $m-m_0-m_1\geq \rho_0$.

Assume $|G|$ is composed with an irrational pencil of curves, we
have
\begin{align*}
{}&K_S+L_m\\
 \geq{}& K_S+\roundup{(\pi^*(mL+T)-\frac{1}{\iota_{m_0}}F_{m_0}-S)|_S}\\
\geq{}& K_S+\roundup{(\pi^*(mL+T-m_1L_0)-\frac{1}{\iota_{m_0}}F_{m_0}-S)|_S}+{M_{m_1}}|_S.
\end{align*} 
We can take $Q=(\pi^*(mL+T-m_1L_0)-\frac{1}{\iota_{m_0}}F_{m_0}-S)|_S$ in Lemma \ref{cy b S2} since $\epsilon(m,0)>0$.
Since $g(C)> 0$, Lemma \ref{cy b S2}(2) implies that Assumption \ref{cy b asum}(2) is satisfied for $m\geq m_0+m_1+1$. 

We complete the proof. 
\end{proof}

In summary, we have a criterion for birationality.
\begin{thm}\label{cy b main} 
Let $(X,L,T)$ be a polarized triple. Keep the same notation as in Subsection \ref{cy b keythm}. Then $|K_X+mL+T|$ gives a birational map if 
$$m> \max\left\{m_0+m_1+\rho_0-1,\mu_0+m_1+\frac{2}{\zeta}\right\}.$$
\end{thm}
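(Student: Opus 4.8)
The plan is to assemble Theorem~\ref{cy b mb} with the two propositions that verify Assumption~\ref{cy b asum}. By Theorem~\ref{cy b mb}, once I know that both parts of Assumption~\ref{cy b asum} hold for a given integer $m>0$ and that $\epsilon(m)>2$, I may immediately conclude that $\Phi_{|K_X+mL+T|}$ is birational onto its image. So the entire task reduces to translating these three hypotheses into explicit numerical conditions on $m$ and taking their common range.

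First I would dispatch the two parts of Assumption~\ref{cy b asum}. Proposition~\ref{cy b a1} supplies part (1) for all $m\geq m_0+\rho_0$, while Proposition~\ref{cy b a2} supplies part (2) for all $m\geq m_0+m_1+\rho_0$. Since $m_1\geq m_0\geq 1$ by the setup in Subsection~\ref{cy b keythm}, the threshold coming from part (2) is the larger of the two; hence both parts of Assumption~\ref{cy b asum} hold as soon as $m\geq m_0+m_1+\rho_0$. Because $m$ is an integer, this is exactly the condition $m>m_0+m_1+\rho_0-1$, which is the first term inside the maximum.

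Next I would unwind the inequality $\epsilon(m)>2$. Recalling from Subsection~\ref{cy b keythm} that $\epsilon(m)=(m-\mu_0-m_1)\zeta$ and that $\zeta>0$, the condition $\epsilon(m)>2$ is equivalent to $m-\mu_0-m_1>\frac{2}{\zeta}$, that is, $m>\mu_0+m_1+\frac{2}{\zeta}$, which is precisely the second term inside the maximum. Taking the maximum of the two thresholds yields the stated lower bound on $m$, and on that range Theorem~\ref{cy b mb} applies verbatim, giving the birationality of $|K_X+mL+T|$.

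I do not expect a genuine obstacle here: the substantive content already resides in Theorem~\ref{cy b mb} together with Propositions~\ref{cy b a1} and \ref{cy b a2}, and the present statement is a bookkeeping consolidation of them. The only points that deserve a moment of care are the passage from the non-strict threshold $m\geq m_0+m_1+\rho_0$ furnished by the propositions to the strict inequality $m>m_0+m_1+\rho_0-1$ in the statement (legitimate exactly because $m$ is an integer), and the observation that the part-(2) threshold dominates the part-(1) threshold, so that a single bound governs both parts of the assumption.
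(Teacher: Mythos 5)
Your proposal is correct and is exactly the argument the paper intends: the theorem is stated as a summary of Theorem \ref{cy b mb} combined with Propositions \ref{cy b a1} and \ref{cy b a2}, and your assembly of the three thresholds (including the integrality step turning $m\geq m_0+m_1+\rho_0$ into $m>m_0+m_1+\rho_0-1$, and the observation that the part-(2) bound dominates the part-(1) bound since $m_1\geq m_0\geq 1$) matches the paper's route.
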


This theorem is optimal in some sense by the following examples.
\begin{example}[{\cite[14.3 Theorem]{Fletcher}}]
Consider the general weighted hypersurface $X_{10}\subset \bP(1,1,1,2,5)$ which is a smooth Calabi--Yau $3$-fold. Take $L=\OO_X(1)$ and $T=K_X\sim 0$. Then $|5L|$ gives a birational map but $|4L|$ does not.

On the other hand, we may take $m_0=m_1=\mu_0=\rho_0=1$ and  $\zeta\geq 1$. Hence Theorem \ref{cy b main} implies that $|mL|$ gives a birational map for all $m\geq 5$.
\end{example}

\begin{example}[{\cite[14.3 Theorem]{Fletcher}}]
Consider the general weighted hypersurface $X_{8}\subset \bP(1,1,1,1,4)$ which is a smooth Calabi--Yau $3$-fold. Take $L=\OO_X(1)$ and $T=K_X\sim 0$. Then $|4L|$ gives a birational map but $|3L|$ does not.

On the other hand, we may take $m_0=m_1=\mu_0=\rho_0=1$.  Note that $S\in |L|$ and $C\in |L|_S|$. Hence $\zeta=L^3=2$. Then Theorem \ref{cy b main} implies that $|mL|$ gives a birational map for all $m\geq 4$.
\end{example}

\begin{example}[{\cite[Theorem 4.5]{CCC}}]
Consider the general weighted complete intersection $X_{2,6}\subset \bP(1,1,1,1,1,3)$ which is a terminal Calabi--Yau $3$-fold. Take $L=\OO_X(1)$ and $T=K_X\sim 0$. Then $|3L|$ gives a birational map but $|2L|$ does not.

On the other hand, we may take $m_0=m_1=\mu_0=\rho_0=1$.  Note that $S\in |L|$ and $C\in |L|_S|$. Hence $\zeta=L^3=4$. Then Theorem \ref{cy b main} implies that $|mL|$ gives a birational map for all $m\geq 3$.
\end{example}

\section{Birationality on polarized triples}\label{cy section 3}
In this section, we consider the birationality problem on polarized triples. By Theorem \ref{cy b main}, we need to estimate $m_0$, $m_1$, $\rho_0$, $\mu_0$, and $\zeta$. First we will give estimation for $\rho_0$ by Reid's formula. Then we reduce the estimation of $m_1$ to the estimation of Hilbert function of $L$ so that we can estimate both $m_0$ and $m_1$ by Reid's formula. Note that $\mu_0$ can be estimated by Remark \ref{cy upper mu0} and $\zeta$ can be estimated by Proposition \ref{cy b zeta} once we have estimation of $m_0$ and $m_1$.

We always assume that $(X,L,T)$ is a polarized triple with $\chi(\OO_X)>0$ in this section.

\subsection{Estimation of $\rho_0$}

In this subsection, we estimate $\rho_0$.
Note that by Theorem \ref{cy fact}(4)(5) and the fact that $i(X)|I(X)$, we have
$$
i(X)\in \{2,3,4,5,6,8,10,12\}.
$$

Since we need to estimate the Hilbert function of some divisor $D$, we need to estimate the singular part $c_Q(D)$ in Reid's formula. We list all the possible values for $c_Q(D)$ with all the possible singularities in Table A. The first row corresponds to the local index $i_Q(D)$ of $D$ and the first column corresponds to the possible singularities of $Q$. In the estimation, we will always replace $c_Q(D)$ by the minimal value in the list corresponding to $Q$.

{\small 
\smallskip

\begin{center}
\begin{tabular}{|c|c|c|c|c|c|c|}
\hline
$i_Q(D)$ & 0 & 1 & 2 & 3 & 4& 5 \\
\hline
$(1,2)$ & 0 & $-1/8$ & &  & &   \\
\hline
$(1,3)$ & 0 & $-2/9$ & $-1/9$ &  & & \\
\hline
$(1,4)$ & 0 & $-5/16$ & $-1/4$ & $-1/16$ & & \\
\hline
$(1,5)$ & 0 & $-2/5$ & $-2/5$ & $-1/5$ & 0&   \\
\hline
$(2,5)$ & 0 & $-2/5$ & $-1/5$ & $-1/5$ & $-1/5$&  \\
\hline
$(1,6)$ & 0 & $-35/72$ & $-5/9$ & $-3/8$ & $-1/9$ & $5/72$  \\
\hline
$(1,8)$ & 0 & $-21/32$ & $-7/8$ & $-25/32$ & $-1/2$& $-5/32$ \\
\hline
$(3,8)$ & 0 &$-21/32$ & $-3/8$ & $-9/32$ & $-1/2$& $-5/32$\\
\hline
$(1,10)$ & 0 & $-33/40$ & $-6/5$ & $-49/40$ & $-1$& $-5/8$  \\
\hline
$(3,10)$ & 0 & $-33/40$ & $-3/5$ & $-9/40$ & $-3/5$& $-5/8$ \\
\hline
$(1,12)$ & 0 & $-143/144$ & $-55/36$ &$-27/16$ & $-14/9$& $-175/144$ \\
\hline
$(5,12)$ & 0 &  $-143/144$ & $-19/36$ &$-11/16$ & $-5/9$& $-31/144$ \\
\hline
\hline
$i_Q(D)$ & 6& 7& 8 & 9 & 10 & 11\\
\hline
$(1,8)$ & $1/8$& $7/32$&   &  &  & \\
\hline
$(3,8)$ &$-3/8$& $-9/32$&   &  &  &\\
\hline
$(1,10)$ & $-1/5$& $7/40$& $2/5$ & $3/8$ &  & \\
\hline
$(3,10)$ & $-1/5$& $-9/40$& $-3/5$ & $-9/40$ &  &\\
\hline
$(1,12)$ &$-3/4$& $-35/144$& $2/9$ & $9/16$ & $25/36$ & $77/144$\\
\hline
$(5,12)$ &$-3/4$& $-35/144$& $-7/9$ & $-7/16$ & $-11/36$ & $-67/144$\\
\hline
\end{tabular}
\end{center}
\centerline{\underline{Table A: table of $c_Q(D)$}}}
\bigskip

Note that for a singular point $Q$ of index $r\in \{2,3,4,5\}$ and for any Weil divisor $D$,
$$
c_Q(D)\geq -\frac{r^2-1}{12r}.
$$ 

To estimate $\rho_0$, we discuss on the value of $i(X)$.
Fix a Weil divisor  $T'\equiv 0$. Recall that  $L^3\geq \frac{1}{i(X)}$ and $\lambda(L)\geq \frac{1}{i(X)}$.

If $i(X)\in \{2,3,4,5\}$, by Reid's formula and equality (\ref{cy chi}),
\begin{align*}
h^0(mL+T')
\geq{}& \chi(\OO_X)+\frac{m^3-m}{6}L^3+m\lambda(L)-\sum_Q\frac{r_Q^2-1}{12r_Q}\\
\geq{}&\frac{m^3+5m}{6i(X)}-\chi(\OO_X).
\end{align*}
Recall that $\chi(\OO_X)\leq 4$ (or $\chi(\OO_X)=1$ if $i(X)$=5),
hence 
$$
\rho_0\leq \begin{cases}
3, & \text{if }i(X)=5;\\
4, &\text{if } i(X)\in \{2,3\};\\
5, & \text{if } i(X)=4.
\end{cases}
$$

If $i(X)=6$, then we write $B_X=\{a\times(1,2),b\times(1,3),c\times(1,6)\}$. By equality (\ref{cy chi}),
$$
24\chi(\OO_X)=\frac{3}{2}a+\frac{8}{3}b+\frac{35}{6}c.
$$
Hence $c<\frac{144}{35}\chi(\OO_X)$. By Reid's formula and equality (\ref{cy chi}),
\begin{align*}
h^0(mL+T')
\geq{}& \chi(\OO_X)+\frac{m^3-m}{6}L^3+m\lambda(L)-\frac{1}{8}a-\frac{2}{9}b-\frac{5}{9}c\\
={}&\frac{m^3-m}{6}L^3+m\lambda(L)-\chi(\OO_X)-\frac{5}{72}c\\
>{}&\frac{m^3+5m}{36}-\frac{9}{7}\chi(\OO_X).
\end{align*}
Recall that $\chi(\OO_X)\leq 4$,
hence $\rho_0\leq 6$.

If $i(X)=8$,  by Morrison \cite[Proposition 3]{M=0}, we have $i(X)=I(X)$, $\chi(\OO_X)=1$, and $B_X=\{3\times(1,2),(1,4), (b_1,8),(b_2,8)\}$ for $b_1,b_2=1$ or $3$.  By Reid's formula,
\begin{align*}
h^0(mL+T')
\geq{}& 1+\frac{m^3-m}{6}L^3+m\lambda(L)-3\times\frac{1}{8}-\frac{5}{16}-2\times \frac{7}{8}\\
={}&\frac{m^3+5m}{48}-\frac{23}{16}.
\end{align*}
Hence $\rho_0\leq 4$.

If $i(X)=10$, by Morrison \cite[Proposition 3]{M=0}, we have $i(X)=I(X)$, $\chi(\OO_X)=1$, and $B_X=\{3\times(1,2),(b_1,5),(b_2,5),(c,10)\}$ for $b_1,b_2=1$ or $2$, $c=1$ or $3$.  By Reid's formula,
\begin{align*}
h^0(mL+T')
\geq{}& 1+\frac{m^3-m}{6}L^3+m\lambda(L)-3\times\frac{1}{8}-2\times \frac{2}{5}-\frac{49}{40}\\
={}&\frac{m^3+5m}{60}-\frac{7}{5}.
\end{align*}
Hence $\rho_0\leq 5$.

If $i(X)=12$, recall that by Morrison \cite[Proposition 3]{M=0}, we have $i(X)=I(X)$, $\chi(\OO_X)=1$, and $B_X=\{2\times(1,2),2\times(1,3),(1,4),(b, 12)\}$ for $b=1$ or $5$.  By Reid's formula,
\begin{align*}
h^0(mL+T')
\geq{}& 1+\frac{m^3-m}{6}L^3+m\lambda(L)-2\times\frac{1}{8}-2\times \frac{2}{9}-\frac{5}{16}-\frac{27}{16}\\
={}&\frac{m^3+5m}{72}-\frac{61}{36}.
\end{align*}
Hence $\rho_0\leq 5$.

In summary, we proved the following proposition.
\begin{prop}\label{cy rho0}
 
We have the following estimation for $\rho_0$:
$$
\rho_0\leq \begin{cases}
3, &\text{if } i(X) =5;\\
4, &\text{if } i(X)\in \{2,3,8\};\\
5, &\text{if }  i(X)\in \{4,10,12\};\\
6, &\text{if }  i(X)=6.
\end{cases}
$$
\end{prop}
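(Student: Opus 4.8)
The plan is to run Reid's Riemann--Roch formula for each admissible value of $i(X)$ and bound the basket contribution from below, so that $h^0(mL+T')$ is forced to be positive for all large $m$. First I would record that since $\chi(\OO_X)>0$, Theorem \ref{cy fact}(4)(5) together with $i(X)\mid I(X)$ restricts $i(X)$ to $\{2,3,4,5,6,8,10,12\}$. For any $T'\equiv 0$ and $m\geq 1$ the divisor $mL+T'$ is nef and big, so Kawamata--Viehweg vanishing gives $h^0(mL+T')=\chi(mL+T')$ and Reid's formula reads
$$h^0(mL+T')=\chi(\OO_X)+\frac{m^3-m}{6}L^3+m\lambda(L)+\sum_Q c_Q(mL+T').$$
I would then replace each $c_Q(mL+T')$ by the smallest value appearing in its row of Table A, and use the lower bounds $L^3\geq \frac{1}{i(X)}$ and $\lambda(L)\geq \frac{1}{i(X)}$ from Lemmas \ref{cy L3} and \ref{cy lambda}, together with $\chi(\OO_X)\leq 4$ (or $=1$ when forced).

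For the small indices $i(X)\in\{2,3,4,5\}$ every singular point has index $r_Q\leq 5$, and here the minimal Table A value satisfies $c_Q\geq -\frac{r_Q^2-1}{12r_Q}=-2\cdot\frac{r_Q^2-1}{24r_Q}$. Summing this and using the identity $\chi(\OO_X)=\sum_Q\frac{r_Q^2-1}{24r_Q}$ from (\ref{cy chi}) collapses the whole basket sum and the leading $\chi(\OO_X)$ into a single clean term, yielding
$$h^0(mL+T')\geq \frac{m^3+5m}{6\,i(X)}-\chi(\OO_X).$$
Solving this cubic inequality against $\chi(\OO_X)\leq 4$ (respectively $\chi(\OO_X)=1$ when $i(X)=5$) produces the stated thresholds $3,4,5$. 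Since the right-hand side is increasing in $m$, positivity at the threshold propagates to all larger $m$, which is exactly what $\rho_0$ requires.

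For the larger indices $i(X)\in\{6,8,10,12\}$ the uniform estimate used above can fail at points of index $\geq 6$ (for instance a $(1,6)$-point may contribute as low as $-5/9<-\frac{35}{72}$), so I would instead read the worst-case contributions directly off Table A and invoke the explicit basket descriptions: Morrison \cite[Proposition 3]{M=0}, which additionally forces $i(X)=I(X)$ and $\chi(\OO_X)=1$, pins down the basket completely for $i(X)\in\{8,10,12\}$, giving directly computable cubic lower bounds and the thresholds $4,5,5$. For $i(X)=6$ I would use the parametrization $B_X=\{a\times(1,2),b\times(1,3),c\times(1,6)\}$; after absorbing as much of the basket sum as possible into $-\chi(\OO_X)$, the leftover is a multiple of the index-$6$ count $c$, which is bounded via the $\chi$-identity $24\chi(\OO_X)=\frac{3}{2}a+\frac{8}{3}b+\frac{35}{6}c$, forcing $c<\frac{144}{35}\chi(\OO_X)$. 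Substituting $L^3\geq\frac16$, $\lambda(L)\geq\frac16$ and $\chi(\OO_X)\leq 4$ then gives a cubic lower bound whose positivity first holds at $m=6$.

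The main obstacle is precisely the $i(X)=6$ case. Unlike $i(X)\in\{5,8,10,12\}$, one does not know that $i(X)=I(X)$, so the basket is not uniquely pinned down and the number $c$ of index-$6$ points is a priori unbounded; moreover the worst-case $(1,6)$-contribution is not dominated by the uniform bound, so the clean collapse of the small-index argument is unavailable. The decisive step is to isolate exactly the part of the basket contribution that cannot be absorbed into $-\chi(\OO_X)$ and to control $c$ through the $\chi$-identity above. Once this isolation is carried out, the remaining work is a routine cubic inequality, and the same routine estimates dispose of the fully determined baskets for $i(X)\in\{8,10,12\}$.
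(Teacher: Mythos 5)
Your proposal matches the paper's proof essentially step for step: the same use of Reid's formula with Kawamata--Viehweg vanishing, the same uniform bound $c_Q\geq -\frac{r_Q^2-1}{12r_Q}$ collapsed against the identity $\chi(\OO_X)=\sum_Q\frac{r_Q^2-1}{24r_Q}$ for $i(X)\in\{2,3,4,5\}$, the same appeal to Morrison's explicit baskets with $\chi(\OO_X)=1$ for $i(X)\in\{8,10,12\}$, and the same treatment of $i(X)=6$ via the parametrization $\{a\times(1,2),b\times(1,3),c\times(1,6)\}$ and the bound $c<\frac{144}{35}\chi(\OO_X)$. The argument is correct and there is nothing to add.
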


\subsection{Estimation of $m_1$}

We give a criterion for a linear system not composing with a pencil of surfaces by looking at its Hilbert function.
\begin{prop}\label{cy b non-pencil} Let $L_0$ be a nef and big Weil divisor.
If $$h^0(mL_0)>i(X)L_0^3m+1$$ for some integer $m$, then
$|mL_0|$ is not composed with a pencil of surfaces.
\end{prop}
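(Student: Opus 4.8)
The plan is to prove the contrapositive: if $|mL_0|$ is composed with a pencil of surfaces, then $h^0(mL_0)\leq i(X)L_0^3m+1$. So I would assume $|mL_0|$ is composed with a pencil and adopt the notation of Subsection \ref{cy b setting} with $D=mL_0$. Taking a suitable resolution $\pi\colon Y\to X$, the movable part $|M|$ of $|\rounddown{\pi^*(mL_0)}|$ is base point free, and we are in Case $(f_{\text{p}})$, so $M=\sum_{i=1}^a S_i\equiv aS$ for a general fiber $S$ of the associated fibration $f$, with $a\geq h^0(mL_0)-1$. Since $m\pi^*(L_0)=M+F$ for some effective $\bQ$-divisor $F$, it suffices to bound $a$ from above.

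The key computation is to intersect with $(\pi^*L_0)^2$. Because $\pi^*(L_0)$ is nef and $F$ is effective,
$$mL_0^3=(m\pi^*(L_0))\cdot(\pi^*L_0)^2\geq M\cdot(\pi^*L_0)^2=a\bigl((\pi^*L_0)^2\cdot S\bigr).$$
Thus the whole argument reduces to a good lower bound for $(\pi^*L_0)^2\cdot S$. By the projection formula this number equals $L_0^2\cdot\pi_*S$, and since $L_0$ and $\pi_*S$ are Weil divisors, Lemma \ref{cy L3} gives $i(X)(L_0^2\cdot\pi_*S)\in\mathbb{Z}$. If I can show this intersection number is strictly positive, it follows that $(\pi^*L_0)^2\cdot S\geq\frac{1}{i(X)}$, whence $mL_0^3\geq a/i(X)$, i.e. $a\leq i(X)L_0^3m$. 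Combined with $a\geq h^0(mL_0)-1$, this yields $h^0(mL_0)\leq i(X)L_0^3m+1$, completing the contrapositive.

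The main obstacle is establishing the strict positivity $(\pi^*L_0)^2\cdot S>0$. I would argue as follows: since $L_0$ is big, $\pi^*(L_0)$ is big, so we may write $\pi^*(L_0)\equiv A+E$ with $A$ an ample $\bQ$-divisor and $E$ an effective $\bQ$-divisor. As $S$ is a general fiber of $f$, the fibers $S$ cover $Y$, so $S$ is not contained in the support of $E$; hence $E|_S$ is effective and $(\pi^*L_0)|_S\equiv A|_S+E|_S$ is big on the surface $S$, giving $(\pi^*L_0)^2\cdot S=\bigl((\pi^*L_0)|_S\bigr)^2>0$. Everything else is routine intersection theory together with the integrality statement of Lemma \ref{cy L3} and the inequality $a\geq h^0(mL_0)-1$ recorded in Subsection \ref{cy b setting}.
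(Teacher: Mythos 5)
Your proof is correct and follows essentially the same route as the paper: intersect $m\pi^*(L_0)\geq M\equiv aS$ with $(\pi^*L_0)^2$, bound $(\pi^*L_0)^2\cdot S$ from below by $\frac{1}{i(X)}$ using the integrality of Lemma \ref{cy L3} together with positivity, and compare with $a\geq h^0(mL_0)-1$. One small imprecision: bigness of $(\pi^*L_0)|_S$ alone does not force positive self-intersection on a surface (big divisors can have negative square), so at that last step you should invoke nefness together with bigness of $(\pi^*L_0)|_S$ --- which you already have available and which is exactly what the paper does.
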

\begin{proof} Assume that $|mL_0|$ is composed with a pencil of surfaces. Set $D:=mL_0$ and keep the same notation as in Subsection \ref{cy b setting}.
Then we have $m\pi^*(L_0)\geq M\equiv (h^0(mL_0)-1)S$. 
Note that by
Lemma \ref{cy L3}, $i(X)\pi^*(L_0)^2\cdot S$ is an integer. On the other hand, $\pi^*(L_0)^2\cdot S$ is positive since $\pi^*(L_0)|_S$ is nef and big on $S$. Hence $\pi^*(L_0)^2\cdot S\geq \frac{1}{i(X)}$.
Thus we have $mL_0^3\geq
(h^0(mL_0)-1)(\pi^*(L_0)^2\cdot S)\geq \frac{1}{i(X)}(h^0(mL_0)-1)$, a contradiction.  \end{proof}

\subsection{Proof of Theorems \ref{cy main1} and \ref{cy main2}}

In this subsection, we prove  Theorems \ref{cy main1} and \ref{cy main2} by estimating $m_0$ and $m_1$.

\begin{proof}[Proof of Theorem \ref{cy main1}]
To prove Theorem \ref{cy main1}, by Section \ref{cy Gor section}, we only need to consider polarized triples $(X,L,T)$ with $\chi(\OO_X)>0$. We discuss on the value of $i(X)$. Recall that 
$$
i(X)\in \{2,3,4,5,6,8,10,12\}.
$$
Recall again that  $L^3\geq \frac{1}{i(X)}$ and $\lambda(L)\geq \frac{1}{i(X)}$. The main problem is to estimate $\sum_Qc_Q$.
In the proof, we often use the fact that if $Q$ is a cyclic singular point and $D$ is a Weil divisor with local index $i_Q(D)=0$, then $c_Q(D)=0$.
\smallskip

{\bf Case 1.} $i(X)=2$ or  $3$.

In this case,  by Reid's formula,
\begin{align*}
h^0(i(X)L)\geq {}&\chi(\OO_X)+\frac{i(X)^3}{6}L^3>1,\\
h^0(2i(X)L)\geq {}&\chi(\OO_X)+\frac{8i(X)^3}{6}L^3>2i(X)^2L^3+1.
\end{align*}
Hence we can take $L_0=L$, $m_0=i(X)$, and $m_1=2i(X)$. Then we have $\mu_0\leq i(X)$ by Remark \ref{cy upper mu0}. By Proposition \ref{cy b zeta}, $\zeta\geq \frac{1}{i(X)}$. By Proposition \ref{cy rho0}, $\rho_0\leq 4$.  By Theorem \ref{cy b main}, $|K_X+mL+T|$ gives a birational map for $m\geq 5i(X)+1$.
\smallskip

{\bf Case 2.} $i(X)=4$.

In this case, 
by the proof of Lemma \ref{cy lambda},
$$
\sum_{i=0}^{3}h^0(5L+iK_X)=4\lambda(5L).
$$
Hence there exists $i_0$ such that 
$$
h^0(5L+i_0K_X)\geq \lambda(5L).
$$
Take $L_0=L+i_0K_X$. Then
\begin{align*}
h^0(5L_0)= {}&h^0(5L+i_0K_X+4i_0K_X)\\
={}&h^0(5L+i_0K_X)\\
\geq {}&\lambda(5L)\\
={}&20 L_0^3+5\lambda(L)\\
> {}& 5i(X)L_0^3+1.
\end{align*}
On the other hand,
$$
h^0(4L_0)= \chi(\OO_X)+\frac{4^3-4}{6}L_0^3+4\lambda(L)>4.
$$
Hence $h^0(4L_0)\geq 5$ and $|5L_0|$ is not composed with a pencil. Take $m_0=4$. By Proposition \ref{cy rho0}, $\rho_0\leq 5$.  

If  $|4L_0|$ is composed with a pencil, then we have $\mu_0\leq 1$ by Remark \ref{cy upper mu0} and we can take $m_1=5$. By Proposition \ref{cy b zeta},   $\zeta\geq \frac{1}{2}$. By Theorem \ref{cy b main}, $|K_X+mL+T|$ gives a birational map for $m\geq 14$.

If   $|4L_0|$ is not composed with a pencil, then we have $\mu_0\leq 4$  and we can take $m_1=4$. By Proposition \ref{cy b zeta},  $\zeta\geq \frac{1}{2}$. By Theorem \ref{cy b main}, $|K_X+mL+T|$ gives a birational map for $m\geq 13$.
\smallskip

{\bf Case 3.} $i(X)=6$.

In this case, recall that $1\leq \chi(\OO_X)\leq 4$ and we write $B_X=\{a\times(1,2),b\times(1,3),c\times(1,6)\}$. By equality (\ref{cy chi}),
$$
24\chi(\OO_X)=\frac{3}{2}a+\frac{8}{3}b+\frac{35}{6}c.
$$

If $\chi(\OO_X)=1$, there is only one solution satisfying $i(X)=6$, which is  $B_X=\{5\times(1,2),4\times(1,3),(1,6)\}$. 
We can take $L_0=L+i_0K_X$ for some $i_0$ such that the local index of $L_0$ at the point $(1,6)$ is $0$. Note that 
$$
\sum_Q c_Q(kL_0)\geq \begin{cases}
-5 \times\frac{1}{8}, &\text{if } k=3;\\
-4 \times\frac{2}{9}, &\text{if } k=4;\\
-5 \times\frac{1}{8}-4 \times\frac{2}{9}, &\text{if } k=7.
\end{cases}
$$
 By Reid's formula,
\begin{align*}
h^0(3L_0)= {}&1+\frac{3^3-3}{6}L_0^3+3\lambda(L)+\sum_Q c_Q(3L_0)
> 1,\\
h^0(4L_0)= {}&1+\frac{4^3-4}{6}L_0^3+4\lambda(L)+\sum_Q c_Q(4L_0)
> 2,\\
h^0(7L_0)= {}&1+\frac{7^3-7}{6}L_0^3+7\lambda(L)+\sum_Q c_Q(7L_0)
> 7i(X)L_0^3+1.
\end{align*}
Hence $h^0(3L_0)\geq 2$ and $|7L_0|$ is not composed with a pencil. Take $m_0=3$. By Proposition \ref{cy rho0}, $\rho_0\leq 6$.

If $|4L_0|$ and $|3L_0|$ are composed with the same pencil, then we have $\mu_0\leq 2$ by Remark \ref{cy upper mu0}. Take $m_1=7$. By Proposition \ref{cy b zeta},  $\zeta\geq \frac{1}{3}$. By Theorem \ref{cy b main}, $|K_X+mL+T|$ gives a birational map for $m\geq 16$.

If $|4L_0|$ and $|3L_0|$ are not composed with the same pencil, then we can take $m_1=4$ and we have $\mu_0\leq 3$. By Proposition \ref{cy b zeta},  $\zeta\geq \frac{1}{2}$. By Theorem \ref{cy b main}, $|K_X+mL+T|$ gives a birational map for $m\geq 13$.

Now we assume that $\chi(\OO_X)\geq 2$. 
Note that for any Weil divisor $D$ and singular point  $Q$ of index $r$, 
$$
c_Q(3D)+c_Q(3D+3K_X)=\begin{cases}
-\frac{1}{8}, &\text{if } r=2;\\
0, &\text{if } r=3;\\
-\frac{3}{8}, &\text{if } r=6.
\end{cases}
$$
Hence 
\begin{align*}
{}& h^0(3L)+h^0(3L+3K_X)\\
={}&2\chi(\OO_X)+2\lambda(3L)+\sum_Q(c_Q(3L)+c_Q(3L+3K_X))\\
={}&2\chi(\OO_X)+2\lambda(3L)-\frac{1}{8}a-\frac{3}{8}c\\
\geq {}&2\lambda(3L).
\end{align*}
Therefore there exists a Weil divisor $L_0\equiv L$ such that 
\begin{align*}
 h^0(3L_0)  \geq \lambda(3L)
 = 4L^3+3\lambda(L)
 >1.
\end{align*}
On the other hand,
$$
h^0(6L_0)\geq \chi(\OO_X)+\frac{6^3}{6}L_0^3 >6i(X)L_0^3+1.
$$
Hence $|6L_0|$ is not composed with a pencil.

Hence we can take $m_0=3$ and $m_1=6$. Then we have $\mu_0\leq 3$ by Remark \ref{cy upper mu0}. By Proposition \ref{cy b zeta}, $\zeta\geq \frac{1}{3}$. By Proposition \ref{cy rho0}, $\rho_0\leq 6$.  By Theorem \ref{cy b main}, $|K_X+mL+T|$ gives a birational map for $m\geq 16$.

\smallskip

{\bf Case 4.} $i(X)=5$.

In this case, recall that by Morrison \cite[Proposition 3]{M=0}, we have $i(X)=I(X)$, $\chi(\OO_X)=1$, and $B_X=\{(b_1,5),(b_2,5),(b_3,5),(b_4,5),(b_5,5)\}$ for $b_i=1$ or $2$ for $1\leq i\leq 5$. 
We can take $L_0=L+i_0K_X$ for some $i_0$ such that the local index of $L_0$ at the point $(b_1,5)$ is $0$. Note that
$\sum_Q c_Q(kL_0) 
\geq -4 \times\frac{2}{5}$ for all $k$.
By Reid's formula,
\begin{align*}
h^0(4L_0)= {}&1+\frac{4^3-4}{6}L_0^3+4\lambda(L)+\sum_Q c_Q(4L_0)
> 2,\\
h^0(5L_0)= {}&1+\frac{5^3-5}{6}L_0^3+5\lambda(L)
= 6,\\
h^0(6L_0)= {}&1+\frac{6^3-6}{6}L_0^3+6\lambda(L)+\sum_Q c_Q(6L_0)
> 6i(X)L_0^3+1.
\end{align*}
Hence $h^0(4L_0)\geq 3$ and $|6L_0|$ is not composed with a pencil. Take $m_0=4$. By Proposition \ref{cy rho0}, $\rho_0\leq 3$.

If $|5L_0|$ and $|4L_0|$ are composed with the same pencil, then we have $\mu_0\leq 1$ by Remark \ref{cy upper mu0}. Take $m_1=6$. By Proposition \ref{cy b zeta},  $\zeta\geq \frac{2}{5}$. By Theorem \ref{cy b main}, $|K_X+mL+T|$ gives a birational map for $m\geq 13$.

If $|4L_0|$ is composed with a pencil, and $|5L_0|$ and $|4L_0|$ are not composed with the same pencil, then we can take $m_1=5$ and we have $\mu_0\leq 2$. By Proposition \ref{cy b zeta},  $\zeta\geq \frac{2}{5}$. By Theorem \ref{cy b main}, $|K_X+mL+T|$ gives a birational map for $m\geq 13$.

If $|4L_0|$ is not composed with a pencil,  then we can take $m_1=4$ and we have $\mu_0\leq 4$. By Proposition \ref{cy b zeta},  $\zeta\geq \frac{2}{5}$. By Theorem \ref{cy b main}, $|K_X+mL+T|$ gives a birational map for $m\geq 14$.
\smallskip

{\bf Case 5.} $i(X)=8$.

In this case, recall that by Morrison \cite[Proposition 3]{M=0}, we have $i(X)=I(X)$, $\chi(\OO_X)=1$, and $B_X=\{3\times(1,2),(1,4), (b_1,8),(b_2,8)\}$ for $b_1,b_2=1$ or $3$. 
We can take $L_0=L+i_0K_X$ for some $i_0$ such that the local index of $L_0$ at the point $(b_1,8)$ is $0$. Note that
$$
\sum_Q c_Q(kL_0)\geq \begin{cases}
- \frac{7}{8}, &\text{if } k=4;\\
-\frac{5}{16}- \frac{7}{8}, &\text{if } k=6.
\end{cases}
$$
By Reid's formula,
\begin{align*}
h^0(4L_0)= {}&1+\frac{4^3-4}{6}L_0^3+4\lambda(L)+\sum_Q c_Q(4L_0)
> 1,\\
h^0(6L_0)= {}&1+\frac{6^3-6}{6}L_0^3+6\lambda(L)+\sum_Q c_Q(6L_0)
> 4,\\
h^0(8L_0)= {}&1+\frac{8^3-8}{6}L_0^3+8\lambda(L) 
> 8i(X)L_0^3+1.
\end{align*}
Hence $h^0(4L_0)\geq 2$ and $|8L_0|$ is not composed with a pencil. Take $m_0=4$. By Proposition \ref{cy rho0}, $\rho_0\leq 4$.

If $|6L_0|$ and $|4L_0|$ are composed with the same pencil, then we have $\mu_0\leq \frac{6}{4}$ by Remark \ref{cy upper mu0}. Take $m_1=8$. By Proposition \ref{cy b zeta},  $\zeta\geq \frac{3}{8}$. By Theorem \ref{cy b main}, $|K_X+mL+T|$ gives a birational map for $m\geq 16$.

If $|6L_0|$ and $|4L_0|$ are not composed with the same pencil, then we can  take $m_1=6$ and we have $\mu_0\leq 4$. By Proposition \ref{cy b zeta}, $\zeta\geq \frac{3}{8}$. By Theorem \ref{cy b main}, $|K_X+mL+T|$ gives a birational map for $m\geq 16$.
\smallskip

{\bf Case 6.} $i(X)=10$.

In this case, recall that by Morrison \cite[Proposition 3]{M=0}, we have $i(X)=I(X)$, $\chi(\OO_X)=1$, and $B_X=\{3\times(1,2),(b_1,5),(b_2,5),(c,10)\}$ for $b_1,b_2=1$ or $2$, $c=1$ or $3$. 
We can take $L_0=L+i_0K_X$ for some $i_0$ such that the local index of $L_0$ at the point $(c,10)$ is $0$. Note that for an even integer $k$, $\sum_Q c_Q(kL_0)\geq  -2\times \frac{2}{5}$.
By Reid's formula,
\begin{align*}
h^0(4L_0)={}&1+\frac{4^3-4}{6}L_0^3+4\lambda(L)+\sum_Q c_Q(4L_0)>1;\\
h^0(6L_0)= {}&1+\frac{6^3-6}{6}L_0^3+6\lambda(L)+\sum_Q c_Q(6L_0)>4;\\
h^0(8L_0)= {}&1+\frac{8^3-8}{6}L_0^3+8\lambda(L)+\sum_Q c_Q(8L_0)
>8i(X)L_0^3+1.
\end{align*}
Hence $h^0(4L_0)\geq 2$ and $|8L_0|$ is not composed with a pencil. Take $m_0=4$. By Proposition \ref{cy rho0}, $\rho_0\leq 5$.

If $|6L_0|$ and $|4L_0|$ are composed with the same pencil, then we have $\mu_0\leq \frac{3}{2}$ by Remark \ref{cy upper mu0}. Take $m_1=8$. By Proposition \ref{cy b zeta}, $\zeta\geq \frac{3}{10}$. By Theorem \ref{cy b main}, $|K_X+mL+T|$ gives a birational map for $m\geq 17$.

If $|6L_0|$ and $|4L_0|$ are not composed with the same pencil, then we can take  $m_1=6$ and we have $\mu_0\leq 4$. By Proposition \ref{cy b zeta}, $\zeta\geq \frac{3}{10}$. By Theorem \ref{cy b main}, $|K_X+mL+T|$ gives a birational map for $m\geq 17$.
\smallskip

{\bf Case 7.} $i(X)=12$.

In this case, recall that by Morrison \cite[Proposition 3]{M=0}, we have $i(X)=I(X)$, $\chi(\OO_X)=1$, and $B_X=\{2\times(1,2),2\times(1,3),(1,4),(b, 12)\}$ for $b=1$ or $5$. 
We can take $L_0=L+i_0K_X$ for some $i_0$ such that the local index of $L_0$ at the point $(b,12)$ is $0$.  Note that
$$
\sum_Q c_Q(kL_0)\geq \begin{cases}
-2\times \frac{1}{8}-\frac{5}{16}, &\text{if } k=3,9;\\
-\frac{5}{16}, &\text{if } k=6.
\end{cases}
$$
By Reid's formula,
\begin{align*}
h^0(3L_0)= {}&1+\frac{3^3-3}{6}L_0^3+3\lambda(L)+\sum_Q c_Q(3L_0)
> 1,\\
h^0(6L_0)= {}&1+\frac{6^3-6}{6}L_0^3+6\lambda(L)+\sum_Q c_Q(6L_0)
> 4,\\
h^0(9L_0)
= {}&1+\frac{9^3-9}{6}L_0^3+9\lambda(L)+\sum_Q c_Q(9L_0)
> 9i(X)L_0^3+1.
\end{align*}
Hence $h^0(3L_0)\geq 2$ and $|9L_0|$ is not composed with a pencil. Take $m_0=3$. By Proposition \ref{cy rho0}, $\rho_0\leq 5$.

If $|6L_0|$ and $|3L_0|$ are composed with the same pencil, then we have $\mu_0\leq \frac{3}{2}$ by Remark \ref{cy upper mu0}. Take $m_1=9$. By Proposition \ref{cy b zeta},  $\zeta\geq \frac{1}{3}$. By Theorem \ref{cy b main}, $|K_X+mL+T|$ gives a birational map for $m\geq 17$.

If $|6L_0|$ and $|3L_0|$ are not composed with the same pencil, then we have $\mu_0\leq 3$  and we can take $m_1=6$. By Proposition \ref{cy b zeta},  $\zeta\geq \frac{1}{3}$. By Theorem \ref{cy b main}, $|K_X+mL+T|$ gives a birational map for $m\geq 16$. 
\end{proof}

\begin{proof}[Proof of Theorem \ref{cy main2}]
Since $L$ has no stable components, take a sufficient divisible $k$ such that $kL\sim M$ is movable and effective and take a sufficient small rational number $\delta>0$ such that $(X, \delta M)$ is terminal. Run a $(K_X+\delta M)$-MMP with scaling of an ample divisor, it terminates on $X'$ by Kawamata \cite{K3}. Since $i(X)l(K_X+\delta M)\sim i(X)l\delta M$ is movable for $l$ sufficient divisible, this MMP $\psi:X\dashrightarrow X'$ does not contract any divisors, hence isomorphic in codimension one. Hence $(X', \delta \psi_*M)$ is terminal and so is $X'$. Hence $X'$ is a minimal $3$-fold with $K_{X'}=\psi_*K_X\equiv 0$ and $\psi_*L$ is a nef and big Weil divisor by MMP. Note that $|K_X+mL+T|$ gives a birational map if and only if so does $|K_{X'}+m\psi_*L+\psi_*T|$,  hence Theorem \ref{cy main2} follows from Theorem \ref{cy main1}. 
\end{proof}



\end{document}